\numberwithin{equation}{section}
\setlist[enumerate]{leftmargin=*,font=\upshape,align=parleft,label=(\alph*),labelsep=0pt}
\setlist[itemize]{leftmargin=*,labelwidth=*}
\DeclareSymbolFont{defaultmathcal}{OMS}{zplm}{m}{n}
\DeclareSymbolFontAlphabet{\mathcal}{defaultmathcal}
\DeclareSymbolFont{handwritten}{OMS}{rsfs}{m}{n}
\DeclareSymbolFontAlphabet{\handcal}{handwritten}
\definecolor{darkgreen}{RGB}{54,124,50}
\newcommand{\N}{\mathbb{N}}
\newcommand{\Z}{\mathbb{Z}}
\newcommand{\R}{\mathbb{R}}
\newcommand{\0}{\mathbb{\emptyset}}
\newcommand{\w}{\infty}
\newcommand{\F}{\mathbb{F}}
\newcommand{\e}{\varepsilon}
\newcommand\ka{\kappa}
\renewcommand{\phi}{\varphi}
\DeclareRobustCommand{\rchi}{{\mathpalette\irchi\relax}}
\newcommand{\irchi}[2]{\raisebox{\depth}{$#1\cchi$}}
\let\cchi\chi
\let\chi\rchi
\newcommand{\EC}{\mathcal{E}}
\newcommand{\FC}{\mathcal{F}}
\newcommand{\HC}{\mathcal{H}}
\newcommand{\IC}{\mathcal{I}}
\newcommand{\dom}{\operatorname{dom}}
\newcommand{\im}{\operatorname{im}}
\newcommand{\Pow}{\handcal{P}}
\newcommand{\del}{\partial}
\def\moverlay{\mathpalette\mov@rlay}
\def\mov@rlay#1#2{\leavevmode\vtop{%
		\baselineskip\z@skip \lineskiplimit-\maxdimen
		\ialign{\hfil$\m@th#1##$\hfil\cr#2\crcr}}}
\newcommand{\charfusion}[3][\mathord]{
	#1{\ifx#1\mathop\vphantom{#2}\fi
		\mathpalette\mov@rlay{#2\cr#3}
	}
	\ifx#1\mathop\expandafter\displaylimits\fi}
\renewcommand{\leq}{\leqslant}
\renewcommand{\le}{\leqslant}
\newcommand{\nle}{\nleqslant}
\renewcommand{\geq}{\geqslant}
\renewcommand{\ge}{\geqslant}
\newcommand*{\defeq}{\mathrel{\vcenter{\baselineskip0.5ex \lineskiplimit0pt \hbox{\scriptsize.}\hbox{\scriptsize.}}}=}
\newcommand*{\eqdef}{=\mathrel{\vcenter{\baselineskip0.5ex \lineskiplimit0pt \hbox{\scriptsize.}\hbox{\scriptsize.}}}}
\DeclareMathSymbol{\lqm}{\mathord}{operators}{``}
\DeclareMathSymbol{\rqm}{\mathord}{operators}{`'}
\newcommand{\set}[1]{\left\{ #1 \right\}}
\newcommand{\rest}[1]{\mathord{|_{#1}}}
\newcommand{\eqcomment}[1]{\Big[\text{#1}\Big] \hspace{6pt}}
\let\defterm\textbf
\theoremstyle{plain}
\newtheorem{theorem}[equation]{Theorem}
\newtheorem*{theorem*}{Theorem}
\def\@empty{}
\def\ifemptycredit#1{%
	\def\tmp{#1}%
	\ifx\tmp\@empty%
	\else%
	{~(#1)}%
	\fi%
}
\newenvironment{namedthm*}[2][]{
	\par\medskip\noindent \textbf{#2}\ifemptycredit{#1}\textbf{.}\itshape\xspace
}{}
\crefname{prop}{Proposition}{Propositions}
\newtheorem{prop}[equation]{Proposition}
\newtheorem*{propo*}{Proposition}
\crefname{property}{Property}{Properties}
\newtheorem*{property*}{Property}
\newtheorem{lemma}[equation]{Lemma}
\newtheorem*{lemma*}{Lemma}
\crefname{claimlemma}{Claim}{Claims}
\crefname{cor}{Corollary}{Corollaries}
\newtheorem{cor}[equation]{Corollary}
\newtheorem*{cor*}{Corollary}
\crefname{obs}{Observation}{Observations}
\newtheorem{obs}[equation]{Observation}
\newtheorem*{obs*}{Observation}
\crefname{obss}{Observations}{Observations}
\newtheorem{obss*}{Observations}
\crefname{fact}{Fact}{Facts}
\newtheorem*{fact*}{Fact}
\theoremstyle{definition}
\crefname{defn}{Definition}{Definitions}
\newtheorem{defn}[equation]{Definition}
\newtheorem*{defn*}{Definition}
\newenvironment{defn**}[1][]{\par\medskip\noindent \textbf{Definition\xspace#1.}\xspace}{}
\crefname{question}{Question}{Questions}
\newtheorem*{question*}{Question}
\crefname{conj}{Conjecture}{Conjectures}
\newtheorem*{conj*}{Conjecture}
\crefname{example}{Example}{Examples}
\newtheorem{example}[equation]{Example}
\newtheorem*{example*}{Example}
\crefname{examples.plain}{Examples}{Examples}
\newtheorem{examples.plain}[equation]{Examples}
\newtheorem*{examples.plain*}{Examples}
\crefname{workinghyp}{Working Hypothesis}{Working Hypotheses}
\newtheorem{workinghyp}[equation]{Working Hypothesis}
\theoremstyle{remark}
\crefname{remark}{Remark}{Remarks}
\newtheorem{remark}[equation]{Remark}
\newtheorem*{remark*}{Remark}
\newenvironment{remarklike*}[2][]{\par\medskip\noindent \textit{#2}#1\textbf{.}\rmfamily\xspace}{\smallskip}
\crefname{claim+}{Claim}{Claims}
\newtheorem{claim+}[equation]{Claim}
\crefname{claim}{Claim}{Claims}
\newtheorem*{claim*}{Claim}
\crefname{subclaim}{Subclaim}{Subclaims}
\newtheorem*{subclaim*}{Subclaim}
\newenvironment{case*}[1]{\smallskip\par\noindent \textit{Case}:~#1.\rmfamily}{}
\newenvironment{case}[2]{\smallskip\par\noindent \textit{Case}~#1: \rmfamily #2.}{}
\crefname{notation}{Notation}{Notations}
\newtheorem{notation}[equation]{Notation}
\newtheorem*{notation*}{Notation}
\newtheorem*{terminology*}{Terminology}
\crefname{convention}{Convention}{Conventions}
\newtheorem{convention}[equation]{Convention}
\newtheorem*{convention*}{Convention}
\newtheorem*{conventions*}{Conventions}
\crefname{spec}{Speculation}{Speculations}
\newtheorem*{spec*}{Speculation}
\crefname{caution}{Caution}{Cautions}
\newtheorem*{caution*}{Caution}
\crefname{hypothesis}{Hypothesis}{Hypotheses}
\newtheorem*{hypothesis*}{Hypothesis}
\crefname{assumption}{Assumption}{Assumptions}
\newtheorem*{assumption*}{Assumption}
\newcommand{\fntsz}[1][11]{\fontsize{#1}{#1}\selectfont}
\crefname{examples}{Examples}{Examples}
\newenvironment{examples*}[1][\alph*]
{
	\refstepcounter{equation}
	\medskip
	\noindent\textbf{Examples.}
	\medskip
	\begin{enumerate}[\bfseries(\theequation.#1),ref=(\theequation.#1),itemsep=5pt]
}
{
	\end{enumerate}
	\smallskip
}
\theoremstyle{remark}
\declaretheoremstyle[
spaceabove=\topsep, 
spacebelow=6pt,
headfont=\normalfont\itshape,
notefont=\normalfont, notebraces={(}{)},
bodyfont=\normalfont,
postheadspace=4pt,
qed=\mbox{\smaller[4]$\boxtimes$}
]{claimproofstyle}
\declaretheorem[name={Proof of Claim}, style=claimproofstyle, unnumbered]{pf}
\crefname{subsection}{Subsection}{Subsections}
\theoremstyle{plain}
\newmdenv[
leftmargin = 1cm,
rightmargin = 0pt,
skipabove = 8pt,
skipbelow = 3pt,
innerleftmargin = 8pt,
innertopmargin = 0pt,
innerbottommargin = 0pt,
innerrightmargin = 0pt,
linewidth = 3pt,
topline = false,
rightline = false,
bottomline = false
]{leftbar}
\definecolor{gris}{RGB}{90,90,90}
\definecolor{vert}{RGB}{7,126,26}
\definecolor{purple}{RGB}{116,0,159}
\def\@settitle{\begin{center}%
		\baselineskip14\p@\relax
		\bfseries
		\uppercasenonmath\@title
		\@title
		\ifx\@subtitle\@empty\else
		\\[1ex]\uppercasenonmath\@subtitle
		\footnotesize\mdseries\@subtitle
		\fi
	\end{center}%
}
\def\subtitle#1{\gdef\@subtitle{#1}}
\def\@subtitle{}
\title[]{The Radon--Nikodym topography of acyclic measured graphs}
\author[]{Anush Tserunyan}
\address[Anush Tserunyan]{Department of Mathematics and Statistics, McGill University, Montreal, QC, Canada}
\email{anush.tserunyan@mcgill.ca}
\author[]{Robin Tucker-Drob}
\address[Robin Tucker-Drob]{Department of Mathematics, University of Florida, Gainesville, FL, USA}
\email{r.tuckerdrob@ufl.edu}
\keywords{Borel graphs, acyclic, treeable, amenable, countable Borel equivalence relations, measure-class-preserving, mcp, Radon–Nikodym cocycle, Radon–Nikodym topography, nonsingular group actions}
\subjclass[2020]{Primary 37A20, 03E15; Secondary 37A40, 05C05, 05C22}
\date{\today}
\begin{document}

\begin{abstract}
We study locally countable acyclic measure-class-preserving (mcp) Borel graphs by analyzing their ``topography''---the interaction between the geometry and the associated Radon--Nikodym cocycle.
We identify three notions of topographic significance for ends in such graphs and show that the number of \emph{nonvanishing ends} governs both amenability and smoothness.
More precisely, we extend the Adams dichotomy from the pmp to the mcp setting, replacing the number of ends with the number of nonvanishing ends: an acyclic mcp graph is amenable if and only if a.e.\ component has at most two nonvanishing ends, while it is nowhere amenable exactly when a.e.\ component has a nonempty perfect (closed) set of nonvanishing ends.
We also characterize smoothness: an acyclic mcp graph is essentially smooth if and only if a.e.\ component has no nonvanishing ends.
Furthermore, we show that the notion of nonvanishing ends depends only on the measure class and not on the specific measure.

At the heart of our analysis lies the study of acyclic countable-to-one Borel functions.
Our critical result is that, outside of the essentially two-ended setting, all back ends in a.e.\ orbit are vanishing and admit cocycle-finite geodesics.
We also show that the number of \emph{barytropic ends} controls the essential number of ends for such functions.
This leads to a surprising topographic characterization of when such functions are essentially one-ended.

Our proofs utilize mass transport, end selection, and the notion of the \emph{Radon--Nikodym core} for acyclic mcp graphs, a new concept that serves as a guiding framework for our topographic analysis.
\end{abstract}

\maketitle

\tableofcontents

\section{Introduction}

This article contributes to the theory of measure-class-preserving (henceforth, mcp)
countable Borel equivalence relations on standard probability spaces.
These are precisely the orbit equivalence relations arising from mcp Borel actions of countable groups on standard probability spaces. 
We focus on the subclass of treeable equivalence relations, which play a role analogous to that played by free groups in the context of groups.

An mcp countable Borel equivalence relation $E$ on a
standard probability space $(X,\mu)$ is said to be \dfn{treeable} if there exists an acyclic Borel graph $T$ on $X$ whose connected components are exactly the equivalence classes of $E$; such a graph $T$ is called a \dfn{treeing} of $E$ (see \cite{Adams_trees,Gaboriau:cost,JKL}). 

Another central notion in the setting of mcp Borel equivalence relations is that of $\mu$-amenability (see \cref{subsec:amenability}), which is equivalent to $\mu$-hyperfiniteness by a celebrated result of Connes, Feldman, and Weiss \cite{Connes-Feldman-Weiss}. 
Throughout the article, we adopt the convention that ``amenable'' always means ``$\mu$-amenable,'' where $\mu$ refers to the underlying probability measure.

In the probability-measure-preserving (pmp) setting,  Adams \cite{Adams_trees} was able to detect amenability amongst treeable equivalence relations by studying the geometry of a given treeing, establishing what is now known as the \dfn{Adams dichotomy}: given a treeing $T$ of an ergodic pmp countable Borel equivalence relation $E$ on $(X,\mu)$, if $E$ is amenable then a.e.\ $T$-component has at most two ends, and if $E$ is nonamenable then a.e.\ $T$-component has perfectly many ends.
See \cref{subsec:classical_Adams} for a concise proof of the Adams dichotomy.

In the pmp setting, Adams's methods were later used to provide structural results for amenable subequivalence relations of treeable equivalence relations, as well as non-treeability results for equivalence relations generated by certain product groups \cite{Adams-Lyons,Kechris:classification_torsion-free_abelian,Hjorth:non-treeability,Bowen:eq_rel_act_hyperbolic}.
Following the work of Gaboriau \cite{Gaboriau:cost,Gaboriau:ell-two-Betti}, it was realized that many of these results could also be obtained using either the theory of cost or $\ell ^2$-Betti numbers.

The notion of cost was introduced by Levitt \cite{Levitt} and subsequently developed by Gaboriau \cite{Gaboriau:cost},
and has had tremendous success in obtaining structural and rigidity results for pmp equivalence relations. 
Gaboriau \cite{Gaboriau:ell-two-Betti} also extended the theory of $\ell^2$-Betti numbers to the setting of pmp equivalence relations and showed that if $E$ is a treeable
pmp equivalence relation, then $\beta^{(2)}_1(E) = \mathrm{cost}(E) - 1$.
In particular, Gaboriau's results imply that an ergodic treeable pmp equivalence relation
$E$ is amenable if and only if $\mathrm{cost}(E) = 1$, if and only if $\beta^{(2)}_1(E) = 0$.

It is widely believed that there is no satisfactory extension of the theory of cost to the
mcp setting, as evidenced by \cite{Inselmann:MSc_thesis,Poulin:elasticity}. 
Moreover, the naive generalization of the Adams dichotomy fails spectacularly in the
mcp setting.
Indeed, the orbit equivalence relation associated to the one-sided Bernoulli shift $h:2^\N\to 2^\N$ is amenable, yet admits (on a co-countable set) a $3$-regular treeing $T_h$ obtained by connecting each point to its $h$-image (see \cref{example:one_sided_shift}).  
A similar counterexample comes from the (essentially free) action of a nonabelian free group on its boundary equipped with the hitting measure of the simple random walk (see \cref{example:free_group_boundary}).

Nevertheless, we are able to extend the Adams dichotomy to the mcp setting by accounting
for the interplay between the tree geometry and the associated Radon--Nikodym cocycle,
coming to the realization that not all ends have equal weight\footnote{Pun intended.}.

The Radon--Nikodym cocycle $\rho :E\to \Rpos$ associated to an mcp countable Borel equivalence relation $E$ on $(X,\mu)$ is an essentially unique Borel function which quantifies the extent to which $\mu$ is not $E$-invariant. 
For $(x,y) \in E$, we write $\rho ^x(y)$ for $\rho (y,x)$, and we view $\rho ^x$ as a ``relative assignment of weights or heights from the perspective of $x$,'' induced by $\rho$ on the $E$-class $[x]_E$; see \cref{subsec:mcp} for precise definitions and further discussion.
We refer to
the interaction between the geometry of a treeing and the level sets of $\rho$ as the
\emph{topography} of $(T,\rho)$, and it is this topography that underlies all of the results in this article.

\subsection{Nonvanishing ends and the generalized Adams dichotomy}

The critical definition which initiated the work in this article is that of a \emph{vanishing end}, which was developed simultaneously in a follow-up preprint \cite{Chen-Terlov-Tserunyan:nonamenable_subforests} to the present paper.
Given $(X, \mu)$ and $(T,\rho)$ as above, we say that an end $\xi$ of a $T$-component $[x]_T$ is \textbf{$\rho$-vanishing} if $\lim_{y \to \xi} \rho^x(y) = 0$, where the limit is taken in either of the natural topologies on the end completion of $[x]_T$ with respect to $T$, see \cref{subsec:topologies}.
Thus, in the pmp setting, where $\rho \equiv 1$, all ends are $\rho$-nonvanishing.
We show in \cref{measure_class_invariant} that this notion actually does not depend on the particular measure (hence the particular Radon--Nikodym cocycle $\rho$), in the sense that any two equivalent measures agree on a conull set of $T$-components about which ends are vanishing.
We therefore drop $\rho$ from the terminology and just use \dfn{vanishing} when there is no cause for confusion.

We also give a new criterion of a.e.\ smoothness of locally countable acyclic mcp graphs, which is closely related to Miller's theorem \cite[Theorem~A]{Miller:ends_of_graphs_I} asserting that Borel graphs with no ends are smooth.
Using this concept, we generalize Adams's dichotomy by replacing the bare number of ends with the number of nonvanishing ends. 
We combine these two results into the following trichotomy, which is one of the main results of our paper.

\begin{theorem}[Trichotomy for acyclic mcp graphs]\label{intro:trichotomy}
Let $T$ be a locally countable acyclic mcp Borel graph on a standard probability
space $(X,\mu)$.
\begin{enumerate}[(a)]
\item\label{item:intro:trich:0} $E_T$ is smooth on a conull set if and only if all ends of almost every
$T$-component are vanishing.
\item\label{item:intro:trich:1-2} $E_T$ is amenable and $\mu$-nowhere smooth if and only if almost every $T$-component has exactly one or two nonvanishing ends.
\item\label{item:intro:trich:>2} $E_T$ is $\mu$-nowhere amenable if and only if for almost every $T$-component,
the set of all nonvanishing ends of that component is nonempty, closed, and has no
isolated points.
\end{enumerate}
\end{theorem}

We also investigate two other natural notions of topographical significance for ends (see \cref{def:barytropic-vanishing-finite_geod}), which turn out to be less robust than the notion of nonvanishing.
We say that an end is \dfn{barytropic} if each half-space containing it is $\rho$-infinite, and we say that an end \dfn{admits $\rho$-infinite geodesics} if each geodesic to it is $\rho$-infinite.
See the paragraphs following \cref{def:barytropic-vanishing-finite_geod} for a discussion of the relationships between these notions.
We highlight here that in the case where each $T$-component has at least two nonvanishing ends, the notions of barytropic and nonvanishing coincide on a conull set, as proved in \cref{at_least_2_nonvanishing_core}.

We prove in \cref{measure_class_invariant}\labelcref{item:barytropic_mcp-invariance} that the notion of a barytropic end is also a measure class invariant in the same sense as with vanishing.
We also show that replacing nonvanishing with barytropic, part \labelcref{item:intro:trich:0} of \cref{intro:trichotomy} holds, but the other parts of this trichotomy fail, even for graphs of acyclic Borel functions, e.g., the one-sided Bernoulli shift on $2^\N$.

In \cref{dichotomy:coc-finite_geodesics}, we prove the analogue of cases \labelcref{item:intro:trich:1-2,item:intro:trich:>2} of \cref{intro:trichotomy} -- another generalization of Adams dichotomy -- where ``nonvanishing'' is replaced with ``admitting $\rho$-infinite geodesics'' and ``closed'' is replaced with ``$G_\delta$''.
However, as pointed out in \cref{infinite_geodesics_not_robust}, there are examples of acyclic mcp graphs such that in each connected component, the set of ends admitting $\rho$-infinite geodesics is $G_\delta$ but not closed.
Furthermore, there are nowhere smooth acyclic graphs in which all ends admit $\rho$-finite geodesics; see \cref{example:least_deletion}.

\subsection{Topography of back ends for acyclic countable-to-one Borel functions}

A large part of \cref{intro:trichotomy} hinges on a detailed topographical analysis of ends of mcp treeings generated by countable-to-one Borel functions which are \dfn{acyclic}, i.e., $f^n(x)\neq x$ for all $n\geq 1$ and $x\in X$. 
Given such a function $f:X \to X$ on a  standard probability
space $(X,\mu)$ such that $E_f$ is mcp, we let $T_f$ denote the associated treeing of $E_f$, i.e., where each $x$ is adjacent to $f(x)$.
For each $x \in X$, the sequence $(f^n(x))_{n\in\N}$ converges to the unique \dfn{forward $f$-end} of the $T_f$-component of $x$. 
We call each other end of $T_f\rest{[x]_{E_f}}$ a \dfn{back end} of $f$.

In \cref{back_ends_converge_to_0,two-ended_function} we characterize exactly when $f$ has nonvanishing back ends in terms of the essential number of ends of $T_f$ (defined in \cref{subsubsec:finite_ends}).

\begin{theorem}[Nonvanishing and essential number of ends]\label{intro:cocycle-back-ends}
Let $f : X \to X$ be an acyclic countable-to-one Borel function on a standard probability space $(X,\mu)$ such that $E_f$ is mcp and $\mu$-nowhere smooth, with Radon--Nikodym
cocycle $\rho$. 
\begin{enumerate}[(a)]
\item If $f$ is essentially two-ended then, after discarding an $E_f$-invariant null set, each forward end of $f$ is $\rho$-nonvanishing, and each $T_f$-component has exactly one $\rho$-nonvanishing back end. 

\item\label{item:intro_back_ends_vanish} If $f$ is $\mu$-nowhere essentially two-ended then, after discarding an $E_f$-invariant null set, each forward end of $f$ is $\rho$-nonvanishing, and all back ends of $f$ are $\rho$-vanishing and admit $\rho$-finite geodesics. 
\end{enumerate}
\end{theorem}

Part \labelcref{item:intro_back_ends_vanish} of \cref{intro:cocycle-back-ends} is the most difficult ingredient in our analysis of vanishing and nonvanishing ends throughout the paper, and it is particularly
important in the proof of \cref{intro:trichotomy}.

In \cref{essential_ends=barytropic} we also characterize barytropy in terms of the essential number of ends of $T_f$.
See \cref{def:mono-poly} for the terminology.

\begin{theorem}[Barytropy and essential number of ends]\label{intro:barytropy_and_essential_ends}
Let $f : X \to X$ be an acyclic countable-to-one Borel function on a standard
probability space $(X,\mu)$ such that $E_f$ is mcp and $\mu$-nowhere smooth, with Radon--Nikodym cocycle $\rho$. 
Then
\begin{enumerate}[(a)]
\item\label{item:intro_monobarytropic} $f$ is $\rho$-monobarytropic if and only if, after discarding an $E_f$-invariant Borel null set, $f$ is essentially one-ended.

\item $f$ is $\rho$-dibarytropic if and only if, after discarding an $E_f$-invariant Borel null set, $f$ is essentially two-ended.

\item $f$ is $\rho$-polybarytropic if and only if $f$ is $\mu$-nowhere essentially finitely-ended.
\end{enumerate}
\end{theorem}

\subsection{Characterization of essentially one-ended countable-to-one Borel functions}

Part \labelcref{item:intro_monobarytropic} of \cref{intro:barytropy_and_essential_ends} is one piece of the following broader characterization of essential one-endedness, proved in \cref{char-rho-finite-back-orbits}.

\begin{theorem}[Characterizations of essential one-endedness]\label{intro:char-one-ended}
Let $f : X \to X$ be an acyclic countable-to-one Borel function on a standard
probability space $(X,\mu)$, such that $E_f$ is measure-class-preserving with associated
Radon--Nikodym cocycle $\rho$. 
The following are equivalent:
\begin{enumerate}[(1)]
    \item\label{item:intro_back-orbits-rho-finite} Almost all back $f$-orbits are $\rho$-finite. 
    \item\label{item:intro_forward_summable} There exists a strictly positive Borel function $Q : X \rightarrow (0, \infty)$ with $\sum _{n\geq 0} Q(f^n(x))<\infty$ for almost every $x\in X$.
    
    \item\label{item:intro_one-ended} After discarding a null set there is a forward $f$-recurrent Borel $E_f$-complete section $Y\subseteq X$ for which the next return map $f_Y : Y\rightarrow Y$ is one-ended.
    
    \item\label{item:intro_ess_one_ended} After discarding a null set there is a forward $f$-invariant Borel $E_f$-complete section $Z\subseteq X$ on which the restriction $f:Z\rightarrow Z$ is one-ended.
    
    \item\label{item:intro_ess_one_ended_loc_fin} After discarding a null set there is a set $Z$ as in \labelcref{item:intro_ess_one_ended} on which the restriction $f:Z\rightarrow Z$ is moreover finite-to-one.
\end{enumerate}
\end{theorem}

The proof of the implication \labelcref{item:intro_back-orbits-rho-finite}$\Rightarrow$\labelcref{item:intro_forward_summable} is a mass transport argument (see \cref{lem:masstransport}). 
The equivalence of \labelcref{item:intro_forward_summable} and \labelcref{item:intro_one-ended} in fact holds in the purely Borel setting (see \cref{remark:essential_one-ended_characterization}). 
Perhaps the most surprising implication is \labelcref{item:intro_one-ended}$\Rightarrow$\labelcref{item:intro_ess_one_ended_loc_fin}, whose proof involves analytic boundedness and the Borel--Cantelli lemma (see \cref{ess_one_end_loc_fin}).
As discussed in \cref{remark:essential_one-ended_characterization}, this implication does not hold in the purely Borel (or even Baire category) setting.

\subsection{Radon--Nikodym core}

To capture the interaction between the geometry of a treeing and the Radon--Nikodym cocycle, we introduce the \dfn{Radon--Nikodym core} of a treeing $T$, denoted $\mathrm{Core}_\mu(T)$, which captures the topographically essential part of $T$.
After discarding a null set, $\mathrm{Core}_\mu(T)$ coincides with the $T$-convex hull (in the $T$-end completion of $X$) of the set of all $\rho$-barytropic ends.

We call the set $\rnc_T \defeq \mathrm{Core}_\mu(T) \cap X$ the \dfn{vertex Radon--Nikodym core}.
When $T = T_f$ for some acyclic Borel function $f : X \to X$, we also define the \dfn{directed vertex Radon--Nikodym core} to be the set $\rnc_f$ of all points whose back $f$-orbit is $\rho$-infinite, and we show in \cref{core=X_infty} that, after discarding a null set, these two sets coincide, i.e., $\rnc_{T_f} = \rnc_f$.

Throughout the paper, the vertex Radon--Nikodym core and its basic properties provide the main organizational framework for the proofs of our results.
The following summarizes the main properties of the vertex Radon--Nikodym core, proved in \cref{core_minimality,Core-bi-inf-or-perfect,Core_idempotent,null_core,null_core=>ess_one-ended}.

\begin{theorem}
Let $T$ be a locally countable acyclic mcp Borel graph on a standard probability space $(X,\mu)$ with associated Radon--Nikodym cocycle $\rho$.
Let $T_\infty \defeq T \rest{\rnc_T}$.
Then:
\begin{enumerate}[(a), leftmargin=*]
\item (Minimality) $\rnc_T$ is essentially contained in each $T$-convex Borel complete section.

\item (Geometry) Almost every $T_\infty$-component is either a bi-infinite line or a leafless tree with no isolated ends.

\item (Idempotence) After discarding a null set, $X^{\mu_\infty}_{T_\infty} = \rnc_T$, where $\mu_\infty \defeq \mu \rest{\rnc_T}$.

\item (Nullness of the core) $\rnc_T$ is null if and only if $T$ is essentially at most one-ended.
\end{enumerate}
\end{theorem}

\subsection{Overview of related work}

This paper has circulated in various preliminary forms since 2021, and the framework and results developed here have already been used in several subsequent works \cite{Chen-Terlov-Tserunyan:nonamenable_subforests,kids,Poulin:elasticity,Terlov-Timar:weighted_amenability,Bell-et-al:heavy_repulsion}, which we now briefly survey.

In \cite{Chen-Terlov-Tserunyan:nonamenable_subforests}, Chen, Terlov, and Tserunyan extend a theorem of Gaboriau and Ghys \cite[IV.24]{Gaboriau:cost} from the pmp to the general mcp setting.
They show that any locally finite mcp Borel graph $G$ whose a.e.\ component has more than two nonvanishing ends admits a relatively ergodic Borel subforest with the same property.
Applying the nonamenability clause of our trichotomy (\cref{intro:trichotomy}), they conclude that this subforest is nowhere amenable and in particular $G$ is nowhere amenable.
As mentioned above, this work was developed in parallel with the present paper, including the definition of nonvanishing ends.

In \cite{kids}, Bell, Chu, and Rodgers answer two of our original questions by constructing two natural acyclic Borel functions with mcp orbit equivalence relations: in one, the Radon--Nikodym cocycle along forward geodesics oscillates, and in the other it converges nonsummably to $0$.
The functions constructed in \cite{kids}, discussed in \cref{example:least_deletion}, are one-ended, leaving open the existence of a nowhere essentially finitely-ended function with oscillatory behavior, which we recently constructed in \cref{example:any-ended_oscillation}.

In \cite{Poulin:elasticity}, Poulin gives a measure-theoretic strengthening of a result of Gaboriau and Jackson–Kechris–Louveau \cite[Theorem 3.17]{JKL}, showing that every treeable ergodic type III mcp countable Borel equivalence relation is generated by a free action of $\F_n$, for any $2 \le n \le \infty$, whose Schreier graph has only nonvanishing ends. 
In the original construction from \cite[Theorem 3.17]{JKL}, the Schreier graph has many vanishing ends, so by the minimality of the Radon--Nikodym core (\cref{core_minimality}), Poulin’s action is more essential: every convex Borel complete section is conull.

The work in \cite{Terlov-Timar:weighted_amenability} is in the setting of percolation theory, where Terlov and Timár revisit weighted amenability for quasi-transitive graphs and prove its equivalence with several properties, including hyperfiniteness. 
Their proofs rely on the amenable case of our trichotomy (\cref{trichotomy}), transported to the percolation setting via Gaboriau’s cluster graphing construction \cite[Sections 2.2--2.3]{Gaboriau:invariant_percolation}.

Finally, in \cite{Bell-et-al:heavy_repulsion}, the authors show that, in Bernoulli percolation on quasi-transitive graphs, distinct clusters have a light set of neighbouring vertices (a set of finite total weight), extending a theorem of Tim\'ar from the unimodular setting \cite{Timar:neighb_clusters} and advancing a longstanding question of H\"aggstr\"om, Peres, and Schonmann \cite{Haggstrom-Peres-Schonmann}.
Their argument relies on the construction of subgraphs with many nonvanishing ends and on a relative (subrelation) version of the nonamenability clause in our trichotomy (\cref{intro:trichotomy}) for locally finite mcp graphs, formulated explicitly in \cite[Proposition 3.24]{Chen-Terlov-Tserunyan:nonamenable_subforests}.

\subsection{Upcoming results}

In \cite{TTD:subrel-antitree} we use the results developed in this article to extend the structure theory of amenable subrelations of treeable equivalence relations from the pmp to the mcp setting, and we also obtain statements that are new even in the pmp case.
For example, we describe precisely which ends of a Borel treeing are selected by a.e.\ class of an amenable subrelation. 
As a consequence, we show that every nowhere smooth amenable subrelation of a treeable mcp equivalence relation is contained in an essentially unique maximal amenable subrelation.
This yields anti-treeability criteria, which we apply to show, for example, that a (possibly nonunimodular) locally compact second countable group with noncompact amenable radical is not treeable.

In \cite{TTD:paddleball}, we revisit and generalize results of Carri\`ere and Ghys from \cite{Carriere-Ghys} on nonamenability and free splittings of mcp equivalence relations.
We then combine this generalization with a combinatorial ``paddle-ball lemma'' to give a new proof of the critical \cref{having_bigger_points_behind} from the present article, and hence of \cref{back_ends_converge_to_0}, which does not rely on \cref{summable_back-rays}.

In ongoing joint work with Berlow and Chen \cite{BCTTD}, we generalize many of the results of this paper to the setting of discrete mcp groupoids acting on tree bundles. 

Lastly, in an upcoming work with Chen and Terlov \cite{CTTTD}, we develop a general theory of ends for locally countable Borel graphs, which we then use to extend the results of the present paper and \cite{Chen-Terlov-Tserunyan:nonamenable_subforests}, as well as the end selection theorem \cite{Adams-Lyons}, \cite[2.19--2.21]{JKL} and the Epstein--Hjorth theorem \cite{Epstein-Hjorth}, to all locally countable mcp Borel graphs.

\subsubsection*{Acknowledgments}

The authors thank Ruiyuan (Ronnie) Chen and Grigory (Greg) Terlov for their feedback and help in shaping this paper, in particular for their parallel development of the notion of nonvanishing ends and for coining this term.
We also thank Katalin Berlow for reading through a draft of this paper and suggesting corrections, and Nachi Avraham-Re'em for pointing out \cite[Proposition 1.3.1]{Aaronson:book} and \cite[Theorem 23]{Kaimanovich:Hopf_decomp}, which relate to some of the statements used or proved here.
We thank the organizers and participants of the ``\textit{Descriptive Set Theory and Dynamics}'' workshop at the Banach Center in Warsaw, Poland, in August 2023, where this work was presented in a three-lecture tutorial, for encouraging us to sharpen the essential Borel complexity calculations of sets of nonvanishing and other kinds of ends.
Finally, the authors thank Benjamin Miller for his inspirational work which influenced the present development.

ATs was supported by NSERC Discovery Grant RGPIN-2020-07120. 
RTD was supported by NSF grant DMS-2246684.

\section{Preliminaries}

Throughout, we use standard notation $\N \defeq \set{0, 1, \dots}$ and $\Rpos \defeq \set{r \in \R : r > 0}$. 
A \dfn{standard Borel space} is a set $X$ equipped with a $\sigma$-algebra that can be obtained as the Borel $\sigma$-algebra of a Polish (i.e., separable and completely metrizable) topology on $X$.  
The $\sigma$-algebra on $X$ will be suppressed from our notation.
We refer to \cite{bible} for terminology and background on standard Borel spaces.   
We will make frequent use of the Luzin--Novikov uniformization theorem \cite[18.10]{bible} to ensure that various sets we deal with are Borel.
Given standard Borel spaces $X$ and $Y$, we always equip $X\times Y$ with the product $\sigma$-algebra, making it a standard Borel space as well. 

By a \dfn{standard $\sigma$-finite measure space} we mean a pair $(\Omega ,\nu )$, where $\Omega$ is a standard Borel space and $\nu$ is a $\sigma$-finite Borel measure on $\Omega$.
A \dfn{standard probability space} is a pair $(X,\mu )$, where $X$ is a standard Borel space and $\mu$ is a Borel probability measure on $X$.

\begin{convention}\label{convention:standard_probability}
    Throughout the article, unless indicated otherwise, $X$ denotes a standard Borel space, and $(X,\mu)$ denotes a standard probability space.
\end{convention}

Given a locally compact second countable group $G$ and a Borel action of $G$ on a standard Borel space $X$, we write $E_G^X$ for the associated orbit equivalence relation, i.e., where two points are equivalent if and only if they are in the same $G$-orbit. 

\subsection{Countable Borel equivalence relations}
Let $E$ be a countable Borel equivalence relation on a standard Borel space $X$.
That is, $E$ is an equivalence relation on $X$ that is Borel as a subset of $X^2$, and each $E$-class is countable.
Throughout the article, by a \dfn{subrelation} of $E$ we mean an equivalence subrelation of $E$, i.e., a subset of $E$ that is also an equivalence relation on $X$.

We use standard terminology regarding countable Borel equivalence relations, such as \dfn{smooth}, \dfn{hyperfinite}, and \dfn{treeable}; we refer the reader to \cite{JKL} for background on these notions.

We write $[x]_E$ for the $E$-class of a point $x\in X$.
A subset $A$ of $X$ is called an \dfn{$E$-complete section} if it meets every $E$-class in a nonempty set; equivalently, the saturation $[A]_E \defeq \bigcup _{y\in A}[y]_E$, of $A$,  is all of $X$. 
A \dfn{transversal} for $E$ is a subset of $X$ that meets every $E$-class in exactly one point.

Let $\rho : E \to \Rpos$ be a Borel \dfn{cocycle} into the positive reals, i.e., $\rho$ is a Borel map satisfying the \dfn{cocycle identity}: $\rho(z,y)\rho(y,x) = \rho(z,x)$ for all pairwise $E$-related points $x,y,z \in X$.
Given a point of reference $x \in X$, for  each $y \in [x]_E$ and $D\subseteq [x]_E$ we define $\rho^x(y)\defeq \rho(y,x)$ and $\rho ^x(D)\defeq \sum _{z\in D}\rho ^x(z)$. 
This notation is used to emphasize viewing $\rho ^x$ as a ``relative assignment of weights from the perspective of $x$,'' induced by $\rho$ on the $E$-class $[x]_E$; by the cocycle identity, switching to the perspective of some other $y\in [x]_E$ scales the corresponding weight assignment by a multiplicative constant, i.e., $\rho ^y = \rho ^y(x)\rho ^x$. 

It follows from the cocycle identity that $\rho ^x(x) =1$ and $\rho ^x(y)=\rho ^y(x)^{-1}$ for all $(y,x)\in E$. 
Thus, the inequality $\rho ^x(y)>1$ expresses that the $\rho$-weight of $y$ is strictly greater than that of $x$.
To emphasize the homogeneous, perspective-independent nature of such statements, we sometimes 
write $\rho^\bullet$; for example, for $(y,x) \in E$, we may write $\rho^\bullet (x)<  \rho^\bullet (y)$ instead of $\rho ^x(y)>1$.
A set $D$ of pairwise $E$-related points is called \dfn{$\rho$-finite} or $\dfn{$\rho$-infinite}$ according to whether $\rho^\bullet (D)$ is finite or infinite.

A Borel cocycle $\rho :E\to \Rpos$ is called a \dfn{coboundary} if there is a Borel function $m : X \to \Rpos$ such that $\rho(y,x) = m(y) / m(x)$ for all $(y,x)\in E$.
Such a function $m$ may be viewed as an ``absolute assignment of weights'' that is consistent with the relative assignments induced by $\rho$. 

\subsection{Measure-class-preserving equivalence relations}\label{subsec:mcp}

Let $E$ be a countable Borel equivalence relation on a standard Borel space $\Omega$, and let $\nu$ be a $\sigma$-finite Borel measure on $\Omega$.

Given a property $P$ of Borel equivalence relations, we say that $E$ is \dfn{$\nu$-nowhere} $P$ if there is no positive measure $E$-invariant Borel subset $A$ of $\Omega$ such that the restriction $E\rest{A}$ has property $P$.
For example, $E$ is \dfn{$\nu$-nowhere smooth} if there is no positive measure $E$-invariant Borel subset $A$ of $\Omega$ such that $E\rest{A}$ is smooth.

The measure $\nu$ is called {\bf $E$-invariant} if the left and right fiber measures $\nu _E^{\ell}$ and $\nu _E^r$ on $E\subseteq \Omega^2$ coincide, where
\begin{equation}\label{eq:fiber_measures}
\int_E h \, d \mu_E^\ell \defeq \int_\Omega \sum_{y \in [x]_E} h(x,y) \; d\nu(x) 
\ \text{ and }\;
\int_E h \, d\mu_E^r \defeq \int_\Omega \sum_{x \in [y]_E} h(x,y) \;d\nu(y)
\end{equation}
for each Borel function $h : E \to [0, \infty]$.
In the case where $\nu$ is an $E$-invariant probability measure, we say that $E$ is
\dfn{probability-measure-preserving (pmp)} on $(\Omega ,\nu )$.

Assume now that $E$ is \dfn{measure-class-preserving} (\dfn{mcp}) on $(\Omega,\nu)$, i.e., the $E$-saturation of every $\nu$-null set is $\nu$-null.
Equivalently (see, for example, \cite[Section 8]{Kechris-Miller}), the left and right fiber measures $\nu_E^\ell$ and $\nu_E^r$, defined by \cref{eq:fiber_measures}, are in the same measure class.
Then the Radon--Nikodym derivative $\rho_\nu \defeq d \nu_E^\ell /d \nu_E^r$ is a Borel cocycle, called the \dfn{Radon--Nikodym cocycle of $E$ with respect to $\nu$}.
Strictly speaking, $d \nu_E^\ell/d \nu_E^r$ only determines an equivalence class of functions, but we can--and shall--always choose a representative $\rho_\nu$ which is indeed a Borel cocycle.

By its definition, $\rho_\nu$ is uniquely characterized (up to null sets) by the following \dfn{mass transport principle}: for each Borel function $h : E \to [0, \infty]$,
\begin{equation}\label{eq:mass_transport}
\int_\Omega \sum_{y \in [x]_E} h(x,y) \;d\nu(x) = \int_\Omega \sum_{y \in [x]_E} h(y,x) \rho_\nu^x(y) \;d\nu(x).    
\end{equation}
In the context of applying this principle, we refer to the function $h$ as a \dfn{mass transport function}, and we think of the value $h(x,y)$ as the amount of mass that $x$ sends to $y$ from the perspective of $x$. 
The quantity $\sum_{y \in [x]_E} h(x,y)$ is then the total mass sent out by $x$ (from the perspective of $x$), while $\sum_{y \in [x]_E} h(y,x) \rho_\nu^x(y)$ is the total mass received by $x$ (from the perspective of $x$).
When $\nu$ is a probability measure, the mass transport principle may then be stated succinctly as: for each mass transport function, the expected mass sent out is equal to the expected mass received. 

\begin{remark}
Although we interpret $h(x,y)$ as the mass sent from $x$ to $y$ from the perspective of $x$, this choice is arbitrary. 
Using the opposite convention--where $h(x,y)$ is interpreted as the mass sent from $y$ to $x$ from the perspective of $y$--leads to the same mass transport principle. 
To verify this for a given $h$, apply \cref{eq:mass_transport} to the function $g(x,y) \defeq h(x,y) \rho_\nu^x(y)$ in place of $h$ to see that the expected mass sent out is still equal to the expected mass received under this alternative interpretation.
\end{remark}

The following proposition is a straightforward consequence of the defining property of Radon--Nikodym derivatives.

\begin{prop}\label{equivalent_measures}
Let $E$ be an mcp countable Borel equivalence relation on a standard $\sigma$-finite measure space $(\Omega,\nu)$.

\begin{enumerate}[(a)]
    \item If $\zeta$ is a $\sigma$-finite Borel measure on $\Omega$ in the same measure class as $\nu$ then, after discarding a null set,
    \[
    \rho_\zeta(x,y) = \tfrac{d\zeta}{d\nu}(x) \rho_\nu(x,y) \tfrac{d\zeta}{d\nu}(y)^{-1}
    \]
    for all $(x,y) \in E$.

    \item \label{coboundary=invariant} The measure $\nu$ is equivalent to an $E$-invariant $\sigma$-finite Borel measure if and only if $\rho_\nu$ is a coboundary on some $\nu$-conull subset of $\Omega$.
\end{enumerate}
\end{prop}

Since every $\sigma$-finite Borel measure is equivalent to a probability measure, we restrict our attention to Borel probability measures for the majority of the article.  
Furthermore, as typically only one measure will be relevant, we adopt the following convention.

\begin{convention}\label{convention:rho}
When there is no risk of confusion, we denote by $\rho$ (rather than $\rho_\mu$) the Radon--Nikodym cocycle associated to an mcp countable Borel equivalence relation $E$ on a standard probability space $(X,\mu )$.
\end{convention}

An immediate application of mass transport is a characterization of mcp equivalence relations that are smooth on a conull set, which is due to Miller \cite[Proposition 2.1]{Miller:meas_with_cocycle_II}, and independently Kaimanovich \cite[Theorem 23]{Kaimanovich:Hopf_decomp} (phrased in a different language).
For the convenience of the reader, we give a proof of this here as an illustration of mass transport technique:

\begin{lemma}\label{smooth=rho-finite}
Let $E$ be an mcp countable Borel equivalence relation on a standard probability space $(X,\mu)$.

\begin{enumerate}[(a)]
\item \label{part:coc-finite_preimages} Let $f : X \to X$ be a Borel function whose graph is contained in $E$.
Then 
\[
\int_X \rho^x (f^{-1}(x)) \,d\mu(x) = 1.
\]
In particular $f^{-1}(x)$ is $\rho$-finite for a.e.\ $x \in X$.

\item \label{part:smooth=rho-finite} \emph{(\cite{Miller:meas_with_cocycle_II,Kaimanovich:Hopf_decomp})} $E$ is smooth on a conull subset of $X$ if and only if a.e.\ $E$-class is $\rho$-finite.
\end{enumerate}
\end{lemma}
\begin{proof}
Part \labelcref{part:coc-finite_preimages} follows from the mass transport principle applied to the indicator function of the graph of $f$.
For part \labelcref{part:smooth=rho-finite}, if $E$ is smooth on a conull set then, after discarding a null set, there is a Borel selector $f : X \to X$ for $E$, so part \labelcref{part:coc-finite_preimages} implies that a.e.\ $E$-class is $\rho$-finite.
Conversely, if a.e.\ $E$-class is $\rho$-finite, then after discarding a null set, the set of all $x \in X$ for which $\rho^x(y) \le 1$ for all $y \in [x]_E$ meets every $E$-class in a finite set, witnessing the smoothness of $E$.
\end{proof}

\begin{lemma}\label{backward_mass_transport}
Let $f:X\to X$ be a countable-to-one Borel function such that $E_f$ is mcp on $(X,\mu )$.
If $\rho ^x(f^{-1}(x))\geq 1$ for a.e. $x\in X$ then $\rho ^x(f^{-1}(x))= 1$ for a.e.\ $x\in X$.
\end{lemma}

\begin{proof}
This is an immediate consequence of \cref{smooth=rho-finite}\labelcref{part:coc-finite_preimages}.
\end{proof}

\subsection{Amenability}\label{subsec:amenability}

Let $E$ be an mcp countable Borel equivalence relation on a standard probability space $(X,\mu )$.
Since the measures $\mu_E^{\ell}$ and $\mu _E^r$ are equivalent, $L^{\infty}(E,\mu_E^{\ell})$ and $L^{\infty}(E,\mu_E^r)$ are one and the same algebra, which we simply denote by $L^{\infty}(E)$. 
Similarly, we write $L^{\infty}(X)$ for the algebra $L^{\infty}(X,\mu )$ when the underlying measure class is clear from the context.

The \dfn{(Borel) full semigroup} of $E$, denoted by $[[E]]$, is the set of all Borel partial injections whose graph is contained in $E$.
This forms an inverse semigroup under composition.
We equip $L^{\infty}(E)$ and $L^{\infty}(X)$ with the left actions of $[[E]]$ defined for $\psi\in [[E]]$ by 
\[
(\psi . h)(x,y)\defeq 
\begin{cases}
h(\psi^{-1}(x),y)   &\text{if }x\in\im (\psi ) \\
0                   &\text{otherwise}
\end{cases}
\]
for $h\in L^\infty (E)$ and $(x,y)\in E$, and
\[
(\psi .f)(x)\defeq
\begin{cases}
f(\psi^{-1}(x))   &\text{if }x\in\im (\psi ) \\
0                   &\text{otherwise}
\end{cases}
\]
for $f\in L^\infty (X)$ and $x\in X$.

The equivalence relation $E$ is called \dfn{$\mu$-amenable} if there exists a linear map from $L^{\infty}(E)$ to $L^{\infty}(X)$ that is unital, positive, and $[[E]]$-equivariant.
By \cite{Connes-Feldman-Weiss} (see also \cite[Section 4.8]{Kerr-Li:book}), $E$ being $\mu$-amenable is equivalent to the existence of an $E$-invariant Borel conull set $X_0 \subseteq X$ and a sequence $(m^k)_{k \in \N}$ of Borel assignments $x \mapsto m^k_x$, of a probability vector $m^k_x\in \ell ^1([x]_E)$ to each $x \in X_0$, satisfying 
\[
\lim_{k\to\infty} \| m^k_x - m^k_y\|_1 =0
\]
for all $(x,y)\in E \rest{X_0}$.

\subsection{Graphs}

Let $G$ be a graph on a vertex set $V$, i.e., $G$ is an irreflexive symmetric subset of $V^2$. 
We call $G$ \dfn{locally countable} if every vertex of $G$ has countably-many neighbors in $G$.
We let $E_G$ denote the $G$-connectedness equivalence relation on $V$, i.e.,
\[
E_G \defeq \{ (x,y)\in V^2 : \text{$x$ and $y$ belong to the same $G$-component} \} .
\]
For a subset $Y$ of $V$, we denote the induced subgraph on $Y$ by $G \rest{Y}$, i.e., $G \rest{Y}\defeq G \cap Y^2$.
A subset $Y$ of $V$ is called \dfn{$G$-connected} if $G\rest{Y}$ is a connected graph on $Y$. 
An equivalence relation $F$ on $V$ is called $G$-connected if each $F$-class is $G$-connected.

We denote by $d_G$ the graph distance associated to $G$ (where $d_G(x,y)\defeq\infty$ if the vertices $x$ and $y$ lie in different $G$-components).
Given vertices $x,y\in V$ at graph distance $d_G(x,y)=\ell <\infty$, a \dfn{geodesic path} through $G$ from $x$ to $y$ is a sequence $(x_n)_{n = 0}^\ell$ of vertices, with $x_0=x$ and $x_{\ell}=y$ such that $d_G(x_n,x_m ) = |n-m|$ for all $0 \le n,m \le \ell$. 
Similarly, a \dfn{geodesic ray}  through $G$ is a sequence $(x_n)_{n \in \N}$ of vertices with $d_G(x_n,x_m )=|n-m|$ for all $n,m\in \N$. 
We define a \dfn{geodesic line} $(x_n)_{n \in \Z}$ through $G$ analogously.

\subsubsection{Borel graphs}  
A graph $G$ on a standard Borel space $X$ is called Borel if it is Borel as subset of $X^2$.
Given a locally countable Borel graph $G$ on $X$, and a Borel probability measure $\mu$ on $X$, we say that $G$ is \dfn{measure-class-preserving (mcp)} on $(X,\mu )$ if $E_G$ is mcp on $(X,\mu )$.
In this context we also write $\rho$ for the Radon--Nikodym cocycle associated to $E_G$.

Given a countable Borel equivalence relation $E$ on $X$, a Borel graph $G$ on $X$ is called a \dfn{graphing} of $E$ if $E_G = E$, i.e., the connected components of $G$ are precisely the $E$-classes. 
If $T$ is a graphing of $E$ that is acyclic, then we call $T$ a \dfn{treeing} of $E$, and we say that $E$ is \dfn{treed} by $T$.

\subsubsection{Lexicographically least paths and nearest points}\label{subsec:lex-least}

Let $G$ be a locally countable Borel graph on a standard Borel space $X$. 
By the Feldman--Moore theorem, there is a Borel proper edge coloring $c:G\to\N$ of $G$, i.e., distinct edges incident to a common vertex are given distinct colors.
This allows for defining lexicographically least paths as follows.

Given $x_0 \in X$ and $A\subseteq X$, we say that a $G$-path $(x_0, x_1, \hdots, x_n)$ is the \dfn{lexicographically least} $G$-path from $x_0$ to $A$ (with respect to $c$) if $x_n \in A$, there is no shorter $G$-path from $x_0$ to $A$, and among all $G$-paths from $x_0$ to $A$ of length $n$, the tuple $(c(x_0, x_1), \hdots, c(x_{n-1}, x_n) )$ is lexicographically least. 
The final point $x_n$ in this path is called the \dfn{lexicographically nearest point to $x_0$ through $G$ among points in $A$}. 
One of the main properties that we will use is:

\begin{obs}\label{lexproperties}
Suppose that $(x_0, x_1, \hdots, x_n)$ is the lexicographically least $G$-path from $x_0$ to $A$, and $B$ is a subset of $A$ containing $x_n$. Then $(x_0, x_1, x_2, \dots, x_n)$ is the lexicographically least $G$-path from $x_0$ to $B$, and moreover for each $0\leq i\leq n$, the path $(x_i,\dots , x_n)$ is the lexicographically least $G$-path from $x_i$ to $B$.
\end{obs}

In our arguments below, we use the notion of lexicographically least paths without referring to a particular Borel proper edge coloring $c$, with the understanding that we have fixed such a coloring at the start of each argument. 

\subsubsection{Ends and geodesics in acyclic graphs}\label{subsubsec:geom_acyclic_graphs}

Let $T$ be an acyclic graph on a vertex set $V$. 

Let $\Rays(T) \subseteq V^\N$ denote the set of all geodesic rays through $T$.
Let $\sim_T$ be the \dfn{tail equivalence relation} on $\Rays(T)$ defined by setting $(x_n) \sim_T (y_n)$ exactly when $(x_{n+N})_{n \in \N} = (y_{n+M})_{n \in \N}$ for some $N,M \in \N$.
An equivalence class of the relation $\sim_T$ is called an \dfn{end} of $T$.
Let 
\[
\del_T V \defeq \Rays(T) / \sim_T
\]
denote the set of all ends of $T$.
For a subset $Y$ of $V$, we denote by $\del_T Y$ the set of all ends of $T$ that are represented by some geodesic ray contained in $Y$.
We call the set 
\[
\-V^T \defeq V \cup \del_T V
\]
the \dfn{end completion} of $V$ (with respect to $T$), which we simply denote by $\-V$ when $T$ is clear from the context.

For a directed edge $e$ of $T$, let $\orig (e)$ denote its origin, $\term (e)$ its terminus, and $e^{-1}$ its inverse, so that $\orig (e^{-1})=\term (e)$, $\term (e^{-1})=\orig (e)$, and $(e^{-1})^{-1}=e$.
The \dfn{edge-boundary} of a subset $Y$ of $V$ is the set of all undirected edges of $T$ incident to $Y$ and $V \setminus Y$, and the \dfn{outgoing edge-boundary} of $Y$ is the set of all directed edges $e$ of $T$ with $\orig (e)\in Y$ and $\term (e)\not \in Y$.
A \dfn{half-space} of $T$ is a $T$-connected subset $U$ of $V$ with exactly one edge in its edge-boundary.
For a directed edge $e$ of $T$, let $\Vo(e)$ be the unique half-space of $T$ with $e$ in its outgoing edge-boundary, and let $\Vt(e)\defeq \Vo(e^{-1})$. 
Thus, every half-space of $T$ is of the form $\Vo(e)$ for a unique directed edge $e$ of $T$.

The \defterm{edge-removal topology} on $\-V$ is the topology generated by the sets $U \cup \del_T U$, with $U$ ranging over all half-spaces of $T$.
The \defterm{vertex-removal topology} on $\-V$ is the topology generated by the edge-removal topology together with all subsets of $V$.
In both topologies, the closure of a half-space $U$ is $U \cup \del_T U$, and the collection of all such half-space closures that contain a given end $\xi \in \del_T V$ is a neighborhood basis of $\xi$.
We may therefore unambiguously speak of convergence of a net in $\-V$ to an end of $T$ without needing to specify the topology.
See \cref{subsec:topologies} for further discussion of these topologies.

For $x,y \in V$ in the same $T$-component, the \dfn{$T$-geodesic between $x$ and $y$}, denoted $[x,y]_T$, is the set of vertices on the unique geodesic path through $T$ from $x$ to $y$. 
We let $(x,y)_T \defeq [x,y]_T \setminus \set{x,y}$, and define $[x,y)_T$ an $(x,y]_T$ analogously.

For a $T$-component $C$, a vertex $x \in C$, and an end $\xi \in \del_T C$, the \dfn{$T$-geodesic from $x$ to $\xi$}, denoted $[x, \xi)_T$, is the set of all vertices on the unique geodesic ray through $T$ that starts at $x$ and represents $\xi$.
Naturally, we define $(x, \xi)_T \defeq [x,\xi)_T \setminus \set{x}$. 
For distinct ends $\xi_0, \xi_1 \in \del_T C$ of the same $T$-component $C$, the \dfn{$T$-geodesic between $\xi$ and $\eta$} is the set of points on some/every geodesic line $(x_n)_{n \in \Z}$ through $T$ for which $(x_{-n})_{n \in \N}$ and $(x_n)_{n \in \N}$ represent the ends $\xi$ and $\eta$, respectively.

For a subset $Y$ of $\-V$, we denote by $\Conv_T Y$ the \dfn{convex hull} of $Y$ in $T$, i.e., the set of points of $\-V$ that lie on some $T$-geodesic between two points in $Y$.
A subset $Y$ of $V$ is called \dfn{$T$-convex} if $\Conv_T Y = Y$.

\subsubsection{Acyclic graphs with finitely many ends}\label{subsubsec:finite_ends}

Let $T$ be a locally countable acyclic Borel graph on a standard Borel space $X$. 
We say that $T$ is \dfn{$\ka$-ended} for $\ka \in \N \cup \set{\aleph_0, 2^{\aleph_0}}$ if the cardinality of the set of ends of each $T$-connected component is $\ka$. 
We say that $T$ is essentially $\ka$-ended if there is a Borel $E_T$-complete section $Y$ that is $T$-convex such that $T \rest{Y}$ is $\ka$-ended.

The following lemma is a special case of \cite[Theorem 2.1]{Miller:ends_of_graphs_I}. 

\begin{lemma}[Miller \cite{Miller:ends_of_graphs_I}]\label{0-ended=>smooth}
If a locally countable acyclic Borel graph $T$ is essentially 0-ended, then $E_T$ is smooth.
\end{lemma}
\begin{proof}
In the proof of \cite[Theorem 2.1]{Miller:ends_of_graphs_I}, the general case is reduced to the case of an acyclic graph and the proof for the latter is the short argument right after \cite[Lemma 2.4]{Miller:ends_of_graphs_I}.
\end{proof}

The following lemma is a restatement of \cite[Lemma 3.19(ii)]{JKL}.

\begin{lemma}\label{3-ended=>smooth}
If a locally countable acyclic Borel graph $T$ is essentially $n$-ended for some finite $n \ge 3$, then $E_T$ is smooth.
\end{lemma}

\begin{lemma}\label{uniquely_ended}
If a locally countable acyclic Borel graph $T$ is essentially $n$-ended and essentially $m$-ended for distinct $m,n \in \N$, then $E_T$ is smooth.
\end{lemma}
\begin{proof}
Assume $m < n$ and let $A$ and $B$ be $T$-convex Borel $E_T$-complete sections such that $T \rest{A}$ is $m$-ended and $T \rest{B}$ is $n$-ended. Let $A'$ be the set of vertices of $A$ that are the closest to the ends of $T \rest{B}$ that are not ends of $T \rest{A}$. 
This set is Borel and meets each $E_T$-class in a nonempty set of at most $n$ points, witnessing the smoothness of $E_T$.
\end{proof}

\subsection{Countable-to-one functions}

Let $X$ be a standard Borel space and let $f : X \to X$ be a countable-to-one Borel function that is \dfn{acyclic}, i.e., there is no $n\geq 1$ and $x\in X$ with $f^n(x)=x$.
We denote by $T_f$ the (undirected) graph on $X$ generated by $f$, i.e., 
\[
T_f \defeq \{ (x,y)\in X^2: f(x)=y\text{ or }f(y)=x\} . 
\]
The function $f$ being acyclic implies that the graph $T_f$ is acyclic. 
We write $E_f$ for the equivalence relation $E_{T_f}$.
For a $T_f$-component $C$, the \dfn{forward $f$-end} of $T_f \rest{C}$ is the end represented by the geodesic ray $(f^n(x))_{n\in \N}$, for some/any $x \in C$.
Each other end of $T_f \rest{C}$ is called a \dfn{back $f$-end}.
We say that $f$ is \dfn{$n$-ended} (resp.\ \dfn{essentially $n$-ended}) if $T_f$ is $n$-ended (resp.\ essentially $n$-ended).

A subset $Y$ of $X$ is said to be \dfn{forward $f$-invariant} if $f(Y)\subseteq Y$, and $Y$ is called \dfn{forward $f$-recurrent} if for each $y\in Y$ there is some $n\geq 1$ with $f^n(y) \in Y$. 
For $x \in \dom(f)$, $D\subseteq \N$, and $A\subseteq X$ we define 
\[
f^{-D}(x)\defeq \bigcup_{n\in D} f^{-n}(x) 
\ \text{ and }\; f^{-D}(A) \defeq \bigcup_{y\in A}f^{-D}(y).
\]
The set $f^{-\N}(x)$ is called the \dfn{$f$-back-orbit} of $x$. 

Given a Borel subset $Y$ of $X$, the \dfn{retraction} to $Y$ along $f$ is the function $\retract{Y,f} : f^{-\N}(Y) \to Y$ given by $\retract{Y,f}(x)\defeq f^n(x)$, where $n\geq 0$ is least such that $f^n(x) \in Y$.  
When $Y$ is forward $f$-recurrent we also define the \dfn{next return map} induced by $f$ on $Y$ to be the map $f_Y : Y \to Y$ given by $f_Y(y)\defeq f^n(y)$ where $n\geq 1$ is least such that $f^n(y)\in Y$. 

\subsection{End selection}
Let $T$ be a treeing of a countable Borel equivalence relation $E$ on $X$. 
Given a Borel subset $X_0$ of $X$, a \dfn{$T$-end selection} on $X_0$ is a map $X_0 \to \Pow _{\mathrm{fin}}(\del_T X)$, $x \mapsto \EC(x)$, assigning to each $x \in X_0$ a finite nonempty set $\EC(x)$ of ends of $T \rest{[x]_{E}}$. 
Such a $T$-end selection $x \mapsto \EC(x)$ is \dfn{Borel} if it lifts to a Borel map $\dot\EC$ from $X_0$ to the standard Borel space of finite subsets of $\Rays(T)$ (where $\Rays(T)$ is naturally identified with a Borel subset of $X^{\N}$). 
Given a Borel subequivalence relation $F$ of $E$, we say that a Borel $T$-end selection $x\mapsto \EC (x)$ on $X_0$ is $\dfn{$F$-invariant}$ if $\EC (x)=\EC (y)$ for all $(x,y)\in F\rest{X_0}$. 
In this situation it is worth noting that, although the map $\EC$ is $F$-invariant and admits a Borel lift $\dot\EC$, in most cases $\EC$ does not admit an $F$-invariant Borel lift.

Given an mcp countable Borel equivalence relation $F$ on $(X,\mu)$, we define 
\[
\Xns_F \defeq \set{x \in X : \rho^x([x]_F) = \infty}.
\]
By \cref{smooth=rho-finite}, the set $\Xns_F$ is the essentially unique largest $F$-invariant Borel subset of $X$ on which $F$ is $\mu$-nowhere smooth.
We can now state the following theorem, which is the starting point of our analysis of amenable subrelations of a treed mcp equivalence relation. 

\begin{theorem}[Adams--Lyons \cite{Adams-Lyons}, Jackson--Kechris--Louveau \cite{JKL}]\label{JKL:end_selection}
Let $T$ be a treeing of an mcp countable Borel equivalence relation $E$ on $(X,\mu )$, and assume that $E$ is $\mu$-nowhere smooth. 
A Borel subequivalence relation $F$ of $E$ is amenable if and only if, after discarding a null set, there is an $F$-invariant Borel $T$-end selection, $X\rightarrow \Pow _{\mathrm{fin}}(\del_T X)$, $x \mapsto \EC (x)$. 

Moreover, if $F$ is an amenable Borel subequivalence relation of $E$ then
\begin{enumerate}[(a)]
\item every $F$-invariant Borel $T$-end selection on $\Xns_F$ satisfies $1\leq |\EC (x)|\leq 2$ for a.e.\ $x\in \Xns_F$;
\item after discarding a null set, there exists an essentially unique maximum $F$-invariant Borel $T$-end selection $x\mapsto \EC _F (x)$ on $\Xns_F$. 
That is, if $x\mapsto \EC (x)$ is any other $F$-invariant Borel $T$-end selection on a relatively conull subset of $\Xns_F$, then $\EC (x)\subseteq \EC _F(x)$ for a.e.\ $x\in \Xns_F$.
\end{enumerate} 
\end{theorem}

\begin{proof}
By \cref{0-ended=>smooth}, since $E$ is $\mu$-nowhere smooth, after discarding a null set we may assume that each $T$-component has at least one end. The theorem then follows from \cite[3.19, 3.21, and 3.26]{JKL}.
\end{proof}


\subsection{Coherent sets of directed edges and pruning}

Let $T$ be an acyclic graph on a vertex set $V$. 
Let $\leq _T$ be the partial order on the set of directed edges of $T$ defined by $e_0 \leq_T e_1$ if and only if $\Vo(e_0) \subseteq \Vo(e_1)$. 

We say that a set $H$ of directed edges of $T$ is \dfn{coherent} if for any two edges $e_0,e_1\in H$ in the same $T$-component, the half-spaces $\Vt(e_0)$ and $\Vt(e_1)$ have nonempty intersection. 
Given a coherent set $H$ of directed edges of $T$, observe that for any two directed edges $e_0 ,e_1 \in H$, the half-spaces $\Vo(e_0)$ and $\Vo(e_1)$ are either disjoint or one contains the other.
Thus, we obtain an equivalence relation $F_H$ on the set $\Vo(H) \defeq \bigcup_{e\in H}\Vo (e)$, by declaring two vertices to be $F_H$-equivalent if and only if they are both contained in $\Vo (e)$ for some $e\in H$.

\begin{prop}\label{coherent_orientation_prop}
Let $T$ be an acyclic graph on a vertex set $V$ and let $H$ be a coherent set of directed edges of $T$.
\begin{enumerate}[(a)]

\item\label{item:coherent_dichotomy} If $T$ is connected then either every $e\in H$ is below some $\leq_T$-maximal element of $H$, or else $(H,\leq_T)$ is a directed set and the net $(\orig (e)) _{e\in H}$ converges in both the edge-removal and vertex-removal topologies to a single end in $\partial _T V$. 

\item\label{item:maximal_edges_transversal} Assume that every $e\in H$ is below some $\leq_T$-maximal element of $H$, and let $H_0 \subseteq H$ be the set of $\leq_T$-maximal elements of $H$. 
Then $\orig (H_0) \defeq \{ \orig (e): e\in H_0 \}$ is a transversal for the associated equivalence relation $F_H$.
\end{enumerate}
\end{prop}
\begin{proof}
    Part \labelcref{item:maximal_edges_transversal} is clear, so we only deal with part \labelcref{item:coherent_dichotomy}.
    
    First note that for any two directed edges $e_0, e_1$ of $T$, the $\leq _T$-interval $[e_0, e_1]_{\le_T}$ is finite, either being empty or consisting of all directed edges of $T$ lying on the directed geodesic path through $T$ from $e_0$ to $e_1$.
    In addition, for any two directed edges $e_0, e_1\in H$, there are only finitely many $e\in H$ with $e_0 \le_T e$ and $e_1 \not\le_T e$, because coherence of $H$ implies that every such $e$ must lie on the geodesic path through $T$ between $\orig(e_0)$ and $\orig(e_1)$.
    
    Now suppose that some $e_0 \in H$ is not below a $\leq_T$-maximal element of $H$.
    Then for each $n \in \N$, having defined $e_n \in H$ with $e_0 \le_T e_n$, we may define $e_{n+1} \in H$ as the immediate successor of $e_n$ in the $\le_T$ order on $H$. 
    (The existence of some successor in $H$ follows from our assumption on $e_0$ and the finiteness of $\leq _T$-intervals, and its uniqueness follows from $H$ being coherent.) 
    By construction, the $e_n$ all lie on a geodesic ray through $T$, so the sequence $(\orig(e_n))_{n\in\N}$ converges to the end $\xi$ of $T$ represented by this ray. 
    
    Since $e_0\leq _T e_n$ for all $n\in \N$, for each $e \in H$ there can be only finitely many $n$ with $e \nle_T e_n$. 
    It follows that $(H,\leq_T)$ is a directed set and that the net $(\orig(e))_{e \in H}$ converges to the end $\xi$.
\end{proof}

This is all compatible with the Borel setting, which we make precise as follows.

\begin{prop}\label{Borel_coherent}
Let $T$ be an acyclic locally countable Borel graph on a standard Borel space $X$ and let $H$ be a Borel set of directed edges of $T$ that is coherent. 

\begin{enumerate}[(i)]
    \item If $(H,\leq_T)$ has no maximal element then for each $x\in X$, letting $H_x$ be the set of the directed edges in $H$ that are in the $T$-component of $x$, the limit $\xi_x \defeq \lim _{e \in H_x} \orig (e) \in \del_T [x]_{E_T}$ exists, and the map $x \mapsto \xi _x$ is an $E_T$-invariant Borel end selection of one end from each $T$-component.
    
    \item If every directed edge in $H$ is $\leq_T$-below a $\leq_T$-maximal element of $H$, then the equivalence relation $F_H$ is smooth.
\end{enumerate}
\end{prop}
\begin{proof}
    This follows from \cref{coherent_orientation_prop} and Luzin--Novikov uniformization.
\end{proof}

We often use this through the following lemma.

\begin{lemma}[Pruning]\label{pruning}
let $T$ be an acyclic locally countable mcp Borel graph on a standard probability space $(X,\mu )$, with associated Radon--Nikodym cocycle $\rho$.

\begin{enumerate}[(a)]
    \item \label{pruning:coherent} Let $H$ be a coherent Borel set of directed edges such that every $e \in H$ is $\leq_T$-below some $\leq_T$-maximal element of $H$. 
    Then, after discarding a null set, the half-space $\Vo (e)$ is $\rho$-finite for every $e \in H$. 
    
    \item \label{pruning:convex} Let $Y$ be a $T$-convex Borel subset of $X$. 
    Then, after discarding a null set, the half-space $\Vo (e)$ is $\rho$-finite for every directed edge $e$ of $T\rest{[Y]_{E_T}}$ with $\Vo (e) \cap Y =\0$.
\end{enumerate}
\end{lemma}

\begin{proof}
Part \labelcref{pruning:convex} follows from \labelcref{pruning:coherent} by taking as $H$ the set of all directed edges $e$ of $T\rest{[Y]_{E_T}}$ with $\Vo (e)\cap Y =\0$. For part \labelcref{pruning:coherent}, note that the associated equivalence relation $F_H$ is smooth and hence, after discarding a null set, every $F_H$-class is $\rho$-finite. The conclusion follows since for each $e\in H$ the half-space $\Vo (e)$ is contained in an $F_H$-class.
\end{proof}


\subsubsection{The Helly property for subtrees}

\cref{coherent_orientation_prop}\labelcref{item:coherent_dichotomy} may be seen as a reformulation of the following version of the Helly property for subtrees. 

\begin{lemma}[Helly property for subtrees]\label{Helly}
    Let $T$ be a tree and let $\FC$ be a family of subtrees of $T$ whose pairwise intersections are nonempty. Then the subtrees in $\FC$ all share a common end or a common vertex.
\end{lemma}
\begin{proof}
Let $H$ be the set of all directed edges $e$ of $T$ such that $\Vo (e)$ is disjoint from some member of $\mathcal{F}$. We may assume $H$ is nonempty, since otherwise either $\mathcal{F}=\{ T \}$ or $\mathcal{F}=\emptyset$, so the conclusion is either trivially or vacuously true. 
The fact that the subtrees in $\mathcal{F}$ have pairwise nonempty intersection implies that $H$ is coherent.
If there is no $\leq_T$-maximal element then by \cref{coherent_orientation_prop}\labelcref{item:coherent_dichotomy} the limit $\lim _{e\in H} \orig (e)$ is an end common to all the subtrees in $\mathcal{F}$. Otherwise, there is some $e\in H$ that is $\leq_T$-maximal, and hence $\term (e)$ is a vertex of every subtree in $\mathcal{F}$.
\end{proof}

\subsection{Topologies on the end completion}\label{subsec:topologies}

Let $T$ be a tree on a vertex set $V$. 

\subsubsection{The edge-removal topology}

Recall that the \defterm{edge-removal topology} on $\-V^T$ is the topology generated by the sets $U \cup \del_T U$, with $U$ ranging over all half-spaces of $T$.
Note that the sets $C \cup \del _T C$, where $C$ ranges over subsets of $V$ with finite edge-boundary in $T$, form a clopen basis for this topology, making it zero-dimensional (and justifying the name).

In fact, $\-V^T$ equipped with the edge-removal topology may also be identified with the Stone dual of the Boolean algebra of all subsets of $V$ having finite edge-boundary in $T$.
This makes it clear that the edge-removal topology is compact. 
Nevertheless, we give an alternative geometric proof of compactness for the reader's convenience.

For a subset $A$ of $\-{V}^T$, we write $\cl{E}{T}(A)$ for the closure of $A$ in the edge-removal topology. 
If $C \subseteq V$ has finite edge-boundary in $T$, then $\cl{E}{T}(C)$ is exactly $C \cup \del _T C$.

\begin{prop}
    The edge-removal topology is compact.
\end{prop}
\begin{proof}
    By the Alexander subbase theorem, it is enough to show that, given a collection $\HC$ of half-spaces of $T$ such that the family $\set{\cl{E}{T}(U)}_{U \in \HC}$ has the finite intersection property, the intersection $\bigcap_{U \in \HC} \cl{E}{T}(U)$ is nonempty.
    This follows by applying the Helly property (\cref{Helly}) to the family $\HC$.
\end{proof}

In particular, if $V$ is countable, then the edge-removal topology on $\-V^T$ is Polish, by the Urysohn metrization theorem.
In addition, $\del _T V$ is $G_\delta$ in $\-V^T$ (and hence Polish), since each ball in $V$ (with respect to the graph distance of $T$) is closed.

\subsubsection{The vertex-removal topology}

Another natural topology on $\-V^T$ is the \defterm{vertex-removal topology}, which is  generated by the sets $U\cup\del_T U$, with $U$ ranging over all half-spaces of $T$, together with all subsets of $V$.
It follows that this topology is zero-dimensional.
It is also not hard to verify that this topology is generated by the sets $C \cup \del _T C$, where $C$ ranges over the $T$-components of $V$ after the removal of all edges incident to a finite set of vertices (which justifies the name of the topology).
Note that the vertex-removal topology is a refinement of the edge-removal topology that coincides with the edge-removal topology when $T$ is locally finite.

However, when $T$ is not locally finite, the vertex-removal topology is not compact, although it is still Polish if $V$ is countable.
In addition, $\del_T V$ is closed in this topology. 
Despite these differences, on the subspace $\del_T V$, the vertex-removal and the edge-removal topologies coincide (even when $T$ is not locally finite).
In fact, the closures of half-spaces coincide in both topologies, and the collection of all such half-space closures that contain a given end $\xi \in \del_T V$ is a neighborhood basis of $\xi$ in both topologies.
We may therefore unambiguously speak of convergence of a net in $\-V^T$ to an end of $T$ without needing to specify the topology.

For a subset $A$ of $\-V^T$ we write $\cl{V}{T}(A)$ for the end-completion closure of $A$.
For a subset $C$ of $V$ with finite edge-boundary in $T$, the closures of $C$ in the edge-removal topology and the vertex-removal topology coincide, and are equal to $C\cup \del_T C$.

\subsection{The classical Adams Dichotomy}\label{subsec:classical_Adams}

As a warm up, we show how to deduce the classic (i.e., pmp) Adams Dichotomy \cite{Adams_trees} from  \cref{smooth=rho-finite,backward_mass_transport,pruning}, and \cref{JKL:end_selection}.

\begin{theorem}\label{classic_Adams}
Let $T$ be a treeing of a pmp countable Borel equivalence relation $E$ on $(X,\mu )$. 
\begin{enumerate}[(a)]
\item If $E$ is $\mu$-amenable then a.e.\ $T$-component has at most two ends.
\item If $E$ is $\mu$-nowhere amenable then the space of ends of a.e.\ $T$-component is nonempty with no isolated points.
\end{enumerate}
\end{theorem}

\begin{proof}
Assume first that $E$ is $\mu$-amenable.
By \cref{smooth=rho-finite}, after the discarding the vertices contained in finite $T$-components, we may assume that $E$ is $\mu$-nowhere smooth.

Let $x\mapsto \EC_F(x)$ be the essentially unique maximum $F$-invariant Borel $T$-end selection granted by \cref{JKL:end_selection}.
Considering separately the invariant Borel sets where $\EC_F(x)$ has cardinality one and two, it is enough to analyze the following two cases.

\begin{case}{1}{$|\EC_F(x)|=1$ for all $x\in X$} Let $f : X \to X$ be the map that moves each $x \in X$ one step closer to the unique end in $\EC_F(x)$.
By \cref{smooth=rho-finite}\labelcref{part:coc-finite_preimages}, after discarding a null set we may assume that $f$ is finite-to-one.
We now iteratively remove leaves from $T$: define $Y\defeq \bigcap _{n\in \N}X_n$ where $X_0\defeq X$ and $X_{n+1}$ is the set of all vertices of $X_n$ of degree at least $2$ in  $T\rest{X_n}$. 
Then each $T$-component not meeting $Y$ is one-ended and, since $f$ is finite-to-one, $T\rest{Y}$ has no leaves and $f(Y)=Y$. 
It remains to show that $Y$ is null, which will imply that a.e.\ $T$-component is one-ended. 
If $Y$ is non-null then, since each $x\in Y$ has at least one $f$-preimage still in $Y$, \cref{backward_mass_transport} shows that, after discarding a null set, $f\rest{Y}$ is a bijection on $Y$. 
This contradicts the essential maximality of the $T$-end selection $x\mapsto\EC_F(x)$. 
\end{case}

\begin{case}{2}{$|\EC_F(x)|=2$ for all $x\in X$}
Let $L$ be the set of all vertices $x$ that lie on the $T$-geodesic between the two ends of $\EC_F(x)$, and let $p:X\to L$ be the map sending each vertex to the unique closest point in $L$.  
Then \cref{smooth=rho-finite} implies that, after discarding a null set, $p$ is finite-to-one, and hence for each $x\in X$ the two ends in $\EC_F(x)$ are the only ends of the $T$-component of $x$.
\end{case}

Assume now that $E$ is $\mu$-nowhere amenable.
By \cref{JKL:end_selection}, after discarding a null set each $T$-component has at least three ends.
Then the set $H$, of all directed edges $e$ of $T$ such that $\Vo(e)$ accumulates on at most one end, is coherent.
Moreover, every $e \in H$ is below a $\leq _T$-maximal element of $H$, since otherwise \cref{coherent_orientation_prop}\labelcref{item:coherent_dichotomy} and the definition of $H$ would imply that some component has at most two ends. 
Thus, by  \cref{pruning}\labelcref{pruning:coherent} and the assumption that $E$ is pmp, after discarding a null set each half-space $\Vo(e)$ with $e\in H$ is finite, hence $T$ has no isolated ends.
\end{proof}

\section{Ends of topographic significance}

In this section, we define three notions of smallness/largeness for ends in an acyclic mcp graph. 
The notion of vanishing ends will play a particularly important role in our analysis.

\begin{defn}\label{def:barytropic-vanishing-finite_geod}
Let $T$ be an acyclic locally countable mcp graph on a standard probability space $(X,\mu)$.
As usual, we let $\rho : E_T \rightarrow \Rpos$ denote the associated Radon--Nikodym cocycle.
Let $C$ be a $T$-component and $\xi \in \partial_T C$.
\begin{itemize}
\item We say that $\xi$ is \dfn{$\rho$-barytropic} if every half-space of $T$ whose closure contains $\xi$ is $\rho$-infinite.
Otherwise, i.e., if there is a $\rho$-finite half-space $V$ of $T$ with $\xi \in \cl{E}{T}(V)$, we say that $\xi$ is \dfn{$\rho$-abarytropic}.

\item Define the \dfn{$\rho$-weight of $\xi$ with respect to $x$} by
\[
\urho^x(\xi) \defeq \limsup_{y \to \xi} \rho^x(y).
\]
We say that $\xi$ is \dfn{$\rho$-vanishing} if $\urho^x(\xi) = 0$, and $\xi$ is called \dfn{$\rho$-nonvanishing} if $\urho^x(\xi) > 0$.

\item Say that $\xi$ \dfn{admits $\rho$-finite geodesics} if every (equivalently: some) geodesic ray converging to $\xi$ is $\rho$-finite; 
otherwise, we say that $\xi$ has \dfn{$\rho$-infinite geodesics}.
\end{itemize}
\end{defn}

Note that all of these definitions except for $\urho^x(\xi)$ only depend on $\xi$ and do not depend on the choice of basepoint in $C$.

Observe that $\xi$ being $\rho$-abarytropic implies that $\xi$ is both $\rho$-vanishing and admits $\rho$-finite geodesics.
We show in \cref{vanishing_implies_cocycle-finite_geodesics} that in a.e.\ $T$-component, every $\rho$-vanishing end of $T$ has $\rho$-finite geodesics.
However, no other implication is true even after discarding a $E_T$-invariant null set.
Indeed, \cref{example:least_deletion} gives an example with ends that admit $\rho$-finite geodesics, but are $\rho$-barytropic, in fact $\rho$-nonvanishing.
Furthermore, \cref{example:free_group_boundary} gives an example with ends that are $\rho$-vanishing and admit $\rho$-finite geodesics but are $\rho$-barytropic.

\begin{defn}\label{def:mono-poly}
Let $(X,\mu)$ be a standard probability space and $f : X \to X$ be an acyclic countable-to-one Borel function such that $E_f$ is mcp and $\mu$-nowhere smooth.
Let $\rho : E_f \to \Rpos$ be the associated Radon--Nikodym cocycle.
Then we say that $f$ is 
\begin{itemize}
\item \dfn{$\rho$-monobarytropic} if a.e.\ $T_f$-component has exactly one $\rho$-barytropic end.

\item \dfn{$\rho$-dibarytropic} if a.e.\ $T_f$-component has exactly two $\rho$-barytropic ends.

\item \dfn{$\rho$-polybarytropic} if a.e.\ $T_f$-component has at least three $\rho$-barytropic ends.
\end{itemize}
\end{defn}

We will show in \cref{essential_ends=barytropic} (or more generally, \cref{measure_class_invariant}) that the notions in \cref{def:mono-poly} are invariants of the measure class of $\mu$.
Therefore, it is safe to drop the reference to $\rho$ in our terminology.
We will also do so for the terms barytropic and nonvanishing if $\rho$ is clear from the context.

\subsection{Smoothness via abarytropic and vanishing ends}

\begin{prop}\label{smooth_iff_all_vanishing_ends}
Let $T$ be an acyclic locally countable mcp graph on a standard probability space $(X,\mu)$, with associated Radon--Nikodym cocycle $\rho : E_T \rightarrow \Rpos$.
The following are equivalent:
\begin{enumerate}[(1)]
\item\label{item:smooth} $E_T$ is smooth on a conull set.
\item\label{item:loc-finite} All ends of $T$ in a.e.\ $T$-component are $\rho$-abarytropic.
\item\label{item:vanishing} All ends of $T$ in a.e.\ $T$-component are $\rho$-vanishing.
\end{enumerate}
\end{prop}
\begin{proof}
If $E_T$ is smooth, then a.e.\ $T$-component is $\rho$-finite by \cref{smooth=rho-finite}. 
It is therefore enough to show that \labelcref{item:vanishing} implies \labelcref{item:smooth}. 
Suppose without loss of generality that all ends of $T$ are $\rho$-vanishing. 
    For a given $T$-component $C$ and a vertex $x \in C$, the restriction of $T$ to the $T$-convex hull $C_x$, of $\set{y \in C : \rho(x) \leq \rho(y)}$, is endless (i.e., contains no infinite geodesic ray) since each of its ends would have to be nonvanishing. 
    It follows that $C_x$, being $T$-convex and endless, is closed in the edge-removal topology. 
    The family $(C_x)_{x\in C}$ has the finite intersection property because $\rho(x) \le \rho(z)$ implies $C_z \subseteq C_x$ for all $x,z \in C$. 
    Thus, by compactness of the edge-removal topology (or the Helly property for subtrees, \cref{Helly}), the intersection $C' \defeq \bigcap_{x \in C} C_x$ is nonempty and $T \rest{C'}$ is connected and endless. 
    A countable transfinite pruning process (as in \cref{0-ended=>smooth} after Lemma 2.4 on page 5, using analytic boundedness), extracts from $C'$, in an $E_T$-invariant Borel way, a subset $C''$ of one or two vertices, hence the Borel assignment $C \mapsto C''$ witnesses the smoothness of $E_T$.
\end{proof}

\subsection{Abarytropy of isolated ends}

\begin{prop}\label{isolated=>vanishing}
Let $T$ be an acyclic locally countable mcp graph on a standard probability space $(X,\mu)$, with associated Radon-Nikodym cocycle $\rho : E_T \rightarrow \Rpos$.
Assume that every $T$-component has at least three ends. 
Then, after discarding a null set, every isolated end of $T$ is $\rho$-abarytropic, hence $\rho$-vanishing.
\end{prop}

\begin{proof}
Let $H$ be the set of directed edges $e$ of $T$ such that $\Vo(e)$ accumulates on at most one end.
Note that $H$ is coherent since each $T$-component has at least three ends. 
Moreover, every $e \in H$ is below a $\leq _T$-maximal element of $H$, since otherwise \cref{coherent_orientation_prop}\labelcref{item:coherent_dichotomy} and the definition of $H$ would imply that some component has at most two ends. 
Thus, the conclusion follows from \cref{pruning}\labelcref{pruning:coherent}.
\end{proof}

\section{The Radon--Nikodym core}

In this section, we introduce the Radon--Nikodym core of an mcp acyclic graph, which is central to our analysis of these graphs.

\begin{defn}
Let $(X,\mu)$ be a standard probability space.

\begin{enumerate}[(a), leftmargin=*]
    \item Let $T$ be an acyclic locally countable mcp graph on $(X,\mu)$, with associated Radon--Nikodym cocycle $\rho$.
    The \dfn{Radon--Nikodym core} of $T$ (with respect to $\mu$), denoted $\mathrm{Core}_\mu(T)$, is the subset of $\-X=X\cup\del_T X$ obtained by discarding the closure of each $\rho$-finite half-space, i.e.,
    \[
    \mathrm{Core}_\mu (T)\defeq \{ z\in \-X : \rho^\bullet (U)=\infty \text{ for every half-space $U$ of $T$ with $z\in \-U$}\} .
    \]
    Thus, $\mathrm{Core}_\mu (T)\cap \del_T X$ is the set of all $\rho$-barytropic ends of $T$.
    
    The \dfn{Radon--Nikodym vertex core} of $T$ (with respect to $\mu$), is the set $\rnc_T\defeq \mathrm{Core}_\mu(T)\cap X$, i.e., the set of all points $x \in X$ such that every half-space of $T$ containing $x$ is $\rho$-infinite.

    \item Let $f : X \to X$ be an acyclic countable-to-one Borel function on $X$ such that $E_f$ is mcp, with associated Radon--Nikodym cocycle $\rho$.
    The \dfn{directed Radon--Nikodym vertex core} of $f$ (with respect to $\mu$), denoted $\rnc_f$, is defined to be the set of all points $x \in X$ whose back $f$-orbit $f^{-\N}(x)$ is $\rho$-infinite.
\end{enumerate}
\end{defn}

Note that $\rnc_f \supseteq \rnc_{T_f}$, and we show below in \cref{core=X_infty} that, after discarding a null set, these two sets coincide.

\subsection{Basic properties}

The Radon--Nikodym core and vertex core are both convex; the vertex core also has the following minimality property among convex Borel sets.

\begin{lemma}[Minimality of the vertex core]\label{core_minimality}
Let $T$ be a locally countable acyclic mcp graph on $(X,\mu)$.
Let $Y \subseteq X$ be a $T$-convex Borel set. 
Then, after discarding a null set, $\rnc_T\cap [Y]_{E_T}\subseteq Y$.
\end{lemma}

\begin{proof}
This follows immediately from \cref{pruning}\labelcref{pruning:convex} and the definition of $\rnc_T$.
\end{proof}

This directly implies:

\begin{cor}
Let $T$ be a locally countable acyclic mcp graph on $(X,\mu)$. Then, after discarding an $E_T$-invariant Borel null set, $\mathrm{Core}_\mu(T)$ is the convex hull of the set of all barytropic ends of $T$.  
\end{cor}

\begin{lemma}\label{Core-bi-inf-or-perfect} Let $T$ be a locally countable acyclic mcp graph on $(X,\mu)$, and let $T_\infty$ denote the restriction of $T$ to $\rnc_T$. 
Then almost every $T_\infty$-component is either a bi-infinite line or a leafless tree with no isolated ends.
\end{lemma}

\begin{proof}
We first assume without loss of generality that $\rnc_T$ is an $E_T$-complete section. 
In particular, $E_T$ is $\mu$-nowhere smooth by \cref{smooth=rho-finite}, and we may therefore assume that $\rnc_T$ meets no $E_T$-class in a single point. 
Removing all $T_\infty$-leaves from $\rnc_T$ still results in a $T$-convex $E_T$-complete section, hence by \cref{core_minimality}, the set of all points whose $T_\infty$-component contains a $T_\infty$-leaf is null.
We may therefore assume that $T_\infty$ has no leaves.
In addition, we may assume that no $T_\infty$-component is a bi-infinite line, toward showing that $T_\infty$ has no isolated ends.
Then the $T_\infty$-convex hull of the set of all vertices of $T_\infty$-degree at least $3$ is a complete section for $E_T$. 
Hence by \cref{core_minimality}, after discarding a null set, this convex hull is equal to $X_T^{\mu}$, so $T_\infty$ has no isolated ends.
\end{proof}

\subsection{Coincidence of directed and undirected Radon--Nikodym vertex cores}

The following lemma will be strengthened in \cref{cocycle_along_f}\labelcref{forward-nonvanishing}, 
where it is shown that if each forward $f$-end is $\rho$-vanishing then $E_f$ is smooth on a conull set.

\begin{lemma}\label{front_half_cocycle_finite}
Let $f:X\to X$ be an acyclic countable-to-one Borel function on a standard probability space $(X,\mu)$ such that $E_f$ is measure-class preserving, with associated Radon--Nikodym cocycle $\rho$.
Suppose that all forward $f$-ends are $\rho$-abarytropic.
Then $E_f$ is smooth on a conull subset of $X$.
\end{lemma}

\begin{proof}
Consider the set $H$ of all directed edges $e$ of $T_f$ such that $\Vo (e)$ is $\rho$-finite and forward $f$-invariant.
By hypothesis, the set $H$ meets every $T_f$-component.
Each $T_f$-component on which $H$ is incoherent is $\rho$-finite, hence $E_f$ is smooth on the union of all such components. 
We may therefore assume that $H$ is coherent, and hence (by forward $f$-invariance of each $\Vo (e)$ for $e\in H$) any two elements of $H$ in the same $T_f$-component are $\leq _T$-comparable.

Therefore, the intersection of $H$ with each $T_f$-component is an infinite geodesic ray or a geodesic line. 
Let $X_H$ be the set of vertices incident to some directed edge in $H$, and let $D\defeq \{  x\in X_H : \rho ^\bullet (x)> \rho ^\bullet (f^n(x))\text{ for all }n\geq 1 \}$, so that $D$ is a complete section for $E_f$. 

The next return map $f_D:D\to D$ is injective on $D$, and $E_f$ is smooth on the saturation $C$ of $D\setminus f_D(D)$.  
The function $f_D$ restricts to a bijection from $D\setminus C$ to itself, so $D\setminus C$ is null by \cref{backward_mass_transport}, and hence $X\setminus C$ is null as well. 
\end{proof}

\begin{prop}\label{core=X_infty}
Let $f : X \to X$ be an acyclic countable-to-one Borel function on a standard probability space $(X,\mu)$, such that $E_f$ is measure-class preserving, with associated Radon--Nikodym cocycle $\rho$. 
Then, after discarding a null set, the directed Radon--Nikodym vertex core of $f$ coincides with the Radon--Nikodym vertex core of $T_f$. 
\end{prop}

\begin{proof}
The containment $\rnc_{T_f} \subseteq \rnc_f$ is clear.
Let $Y$ be the union of all $T_f$-components containing a $\rho$-finite half-space of $T_f$ that is forward $f$-invariant, and let $Z \defeq X \setminus Y$.
Then it is clear that $\rnc_{T_f} \cap Z= \rnc_f \cap Z$ and, after discarding a null set, $E_f$ is smooth on $Y$ by \cref{front_half_cocycle_finite}.
Therefore, after discarding another null set, $\rnc_f$ is contained in $Z$, completing the proof. 
\end{proof}

\subsection{Null Radon--Nikodym vertex core}

\begin{theorem}[Characterization of null Radon--Nikodym vertex core]\label{null_core}
Let $T$ be an acyclic locally countable mcp Borel graph on a strandard probability space $(X,\mu )$, with associated Radon--Nikodym cocycle $\rho$.
Suppose that $E_T$ is $\mu$-nowhere smooth.
Then, the Radon--Nikodym vertex core of $T$ is null if and only if, after discarding a null set, $T=T_f$ for some acyclic Borel function $f:X \to X$ with $\rho$-finite back-orbits.
Moreover, if such a function $f$ exists, and if $g:X\to X$ is another Borel function with $T_g=T$, then $g=f$ on a conull set. 
\end{theorem}

\begin{proof}
It is clear that if $T = T_f$ for some Borel function $f : X \to X$ with $\rho$-finite back orbits, then the Radon--Nikodym vertex core of $T$ is empty.
Now, suppose that such a function $f$ exists and let $g:X\to X$ be another Borel function with $T_g=T$. 
Suppose toward a contradiction that $g(x)\neq f(x)$ for all $x$ belonging to a non-null subset $Y$ of $X$, which we may assume is an $E_T$-complete section without loss of generality.
Then for all $x\in X$, the end $\xi _g (x) \defeq \lim _{n\to \infty}g^n(x)$ is in the closure of some $f$-back orbit, which by assumption is a $\rho$-finite half-space of $T$.
Therefore, applying \cref{front_half_cocycle_finite} to $g$ implies that $E_T$ is smooth on a conull set, a contradiction.

Turning now to the converse, suppose that the Radon--Nikodym vertex core of $T$ is empty, towards the goal of defining a Borel function $f$ with $T = T_f$ and $\rho$-finite back-orbits.
By \cref{smooth=rho-finite}, after discarding a null set, we may assume that each $T$-component is $\rho$-infinite.

Let $H$ be the set of all directed edges $e$ of $T$ such that the half space $\Vo[T](e)$ is $\rho$-finite.
Since each $T$-component is $\rho$-infinite, $H$ is coherent.
Furthermore, it follows from the Radon--Nikodym vertex core being empty that $(H, \le_T)$ has no maximal elements, so \cref{Borel_coherent} yields a Borel $E_{T}$-invariant selection $x\mapsto \xi (x)$ of exactly one end of $T$ from each $T$-component.
Thus, $T = T_f$, where $f: X\to X$ is the Borel function sending each $x \in X$ to its successor along the geodesic ray in $T$ from $x$ to $\xi (x)$. 
Observe that $f$ has $\rho$-finite back-orbits, since each back-orbit of $f$ is contained in $\Vo(e)$ for some $e\in H$.
\end{proof}

From this and a characterization of essentially one-ended acyclic Borel functions, proven later in \cref{char-rho-finite-back-orbits}, we immediately obtain:

\begin{cor}\label{null_core=>ess_one-ended}
Let $T$ be an acyclic locally countable mcp graph on a standard probability space $(X,\mu )$. 
Let $Z$ be the complement of the $E_T$-saturation of $\rnc_T$.
Then $T\rest{Z}$ is essentially at most one-ended.
\end{cor}

\begin{proof}
Assuming without loss of generality that $Z$ is a complete section and $E_T$ is $\mu$-nowhere smooth, \cref{null_core} implies that $T = T_f$ for some acyclic Borel function $f : X \to X$ with $\rho$-finite back-orbits, hence $T$ is essentially one-ended by \cref{char-rho-finite-back-orbits}.
\end{proof}

\subsection{Idempotence of the Radon--Nikodym vertex core}

In this subsection, we prove idempotence of the operation of taking the Radon--Nikodym vertex core of an acyclic locally countable mcp Borel graph $T$.
As a consequence, when $T=T_f$ for some acyclic Borel function $f$, the topography of the Radon--Nikodym cocycle $\rho$ on $T_f$ can be reduced to two cases: 
(1) the case where all back $f$-orbits are $\rho$-finite, and (2) the case where all back $f$-orbits are $\rho$-infinite.

In fact, proving idempotence of the core of $T$ in general quickly reduces (via \cref{null_core}) to the special case where $T$ is generated by a function. 
This case in turn follows from a mass transport argument (\cref{nonrecurrent_mass_transport}) together with \cref{rho-infinite_back-orbits=>on_all_rho-infinite_ends}. 
While simple, \cref{rho-infinite_back-orbits=>on_all_rho-infinite_ends} is critical for our analysis of the behaviour of the Radon--Nikodym cocycle along ends of a treeing induced by the function.

\begin{prop}\label{rho-infinite_back-orbits=>on_all_rho-infinite_ends}
Let $f : X \to X$ be an acyclic countable-to-one Borel function on a standard probability space $(X,\mu)$ such that $E_f$ is mcp, with associated Radon--Nikodym cocycle $\rho$. 
Let $Y \subseteq \rnc_f$ be a Borel set. 
Then, after discarding an $E_f$-invariant null set, for every $y\in Y$ we have:
	
	\begin{enumerate}[(a)]
		\item \label{rho-infinite_back-orbits=>on_all_rho-infinite_ends:forward_recurrence} $Y$ is forward $f$-recurrent in $[y]_{E_f}$,
		
		\item \label{rho-infinite_back-orbits=>on_all_rho-infinite_ends:meets_rho-infinite_back-sets} Each $\rho$-infinite $T_f$-convex subset of $[y]_{E_f}$ meets $Y$ infinitely often,
		
		\item \label{rho-infinite_back-orbits=>on_all_rho-infinite_ends:convex-hall=X}  $\rnc_f\cap [y]_{E_f}= \Conv _{T_f}(Y) \cap [y]_{E_f}$.
	\end{enumerate}
\end{prop}
\begin{proof}
	For \labelcref{rho-infinite_back-orbits=>on_all_rho-infinite_ends:forward_recurrence}, 
	it is enough to show that the set $D$, of all points $z\in Y$ for which there is no $n\geq 1$ with $f^n(z)\in Y$, is null. 
    For $z\in D$ its back $f$-orbit $f^{-\N}(z)$ is $\rho$-infinite (since $z\in \rnc_f$) and coincides with the preimage $r_{D,f}^{-1}(z)$ of $z$ under the retraction $r_{D,f}:f^{-\N}(D)\to D$ to $D$ along $f$.
    But $r_{D,f}$ is almost surely $\rho$-finite-to-one by \cref{smooth=rho-finite}, hence $D$ must be null.

	For \labelcref{rho-infinite_back-orbits=>on_all_rho-infinite_ends:meets_rho-infinite_back-sets}, after discarding a null set we may assume that $r_{Y,f}$ is $\rho$-finite-to-one and that $Y$ is $f$-forward recurrent in $[y]_{E_f}$. 
    Thus, $r_{Y,f}(S)$ is infinite for each $\rho$-infinite subset $S$ of $[y]_{E_f}$. 
    If $S$ is additionally $T_f$-convex, then $r_{Y,f}(S)$ can contain at most one point of $Y$ not in $S$, so $S \cap Y$ is infinite.
	
	\labelcref{rho-infinite_back-orbits=>on_all_rho-infinite_ends:convex-hall=X} follows from \labelcref{rho-infinite_back-orbits=>on_all_rho-infinite_ends:forward_recurrence} and \labelcref{rho-infinite_back-orbits=>on_all_rho-infinite_ends:meets_rho-infinite_back-sets}.
 Alternatively, it also follows from \cref{core_minimality,core=X_infty}.
\end{proof}

\begin{remark}
    \cref{rho-infinite_back-orbits=>on_all_rho-infinite_ends}\labelcref{rho-infinite_back-orbits=>on_all_rho-infinite_ends:forward_recurrence} and \cref{nonrecurrent_mass_transport} may be viewed as reinterpretations of classical ergodic-theoretic arguments which make use of the Frobenius--Perron transfer operator, for example as presented by Aaronson in \cite[Proposition 1.3.1]{Aaronson:book}.
\end{remark}

\begin{lemma}\label{nonrecurrent_mass_transport}
Let $(X,\mu )$ be a standard probability space, and let $f : X \to X$ be an acyclic countable-to-one Borel function on $X$ such that $E_f$ is mcp on $(X,\mu )$, with associated Radon--Nikodym cocycle $\rho$.
Then for each $C > 0$ the set 
\[
X_C \defeq \set{x \in X : \rho ^x (f^{-\N}(x)) \le C} 
\]
is $\mu$-nowhere forward $f$-recurrent.
\end{lemma}

\begin{proof}
Define a mass transport where each $x \in X$ sends $1$ to $f^n(x)$ if $f^n(x) \in X_C$ and $n \ge 0$, and $x$ sends $0$ to all other points.
Then the mass sent out by each $x \in X$ is the cardinality of $\set{n \in \N : f^n(x) \in X_C}$, while the mass received by each $y \in X$ is $\rho^y(f^{-\N}(y))$ if $y \in X_C$, and $0$ otherwise.
By the mass transport principle, the expected mass sent out is equal to the expected mass received, which is bounded by $C < \infty$.
Therefore, for a.e.\ $x \in X$, there are only finitely many $n \in \N$ for which $f^n(x) \in X_C$.
\end{proof}

\begin{theorem}[Idempotence of the Radon--Nikodym vertex core]\label{Core_idempotent} 
Let $T$ be an acyclic locally countable mcp Borel graph on a standard probability space $(X,\mu )$, and assume that the Radon--Nikodym vertex core $\rnc_T$ of $T$ is non-null. Let $T_{\infty}$ be the restriction of $T$ to $\rnc_T$, and let $\mu_\infty$ be the normalized restriction of $\mu$ to $\rnc_T$.  
Then, after discarding a null set, the Radon-Nikodym vertex core of $T_{\infty}$ (with respect to $\mu_\infty$) coincides with $\rnc_T$.
\end{theorem}

\begin{proof}
Assume first that $T=T_f$ for some acyclic countable-to-one Borel function $f:X\to X$, and let $f_{\infty}:\rnc_f \to \rnc_f$ be the restriction of $f$ to $\rnc_f$.
By \cref{core=X_infty}, after discarding a null set, $\rnc_T = \rnc_f$ and $T_{\infty}=T_{f_{\infty}}$, so it is enough to show that for each $C>0$ the set $Y_C$, of all $x\in \rnc_f$ with $\rho ^x(f_{\infty}^{-\N}(x))\leq C$, is null. 
By \cref{nonrecurrent_mass_transport} (applied to $f_{\infty}$ and $\rnc_f$ equipped with the measure $\mu_\infty$), the set $Y_C$ is $\mu_\infty$-nowhere forward $f_\infty$-recurrent, hence also $\mu$-nowhere forward $f$-recurrent. 
Therefore, $Y_C$, being a subset of $\rnc_f$, is $\mu$-null by \cref{rho-infinite_back-orbits=>on_all_rho-infinite_ends}\labelcref{rho-infinite_back-orbits=>on_all_rho-infinite_ends:forward_recurrence} applied to $f$.

For the general case, let $Y$ be the Radon--Nikodym vertex core of $T_{\infty}$.
By \cref{core_minimality}, after discarding a null set, $\rnc_T \cap [Y]_{E_T}=Y$, so it is enough to show that the set $\rnc_T\setminus [Y]_{E_T}$ is null.
Supposing otherwise, we may assume without loss of generality that $Y$ is empty and that $\rnc_T$ is an $E_T$-complete section, so that each $T$-component is $\rho$-infinite.
By \cref{smooth=rho-finite}, $E_T$ is $\mu$-nowhere smooth, so $E_{T_\infty}$ is $\mu$-nowhere smooth, and hence
by \cref{null_core}, $T_\infty = T_{f_\infty}$ for some Borel function $f_\infty : \rnc_T \to \rnc_T$.
Since $\rnc_T$ is $T$-convex, this implies that $T = T_f$ for the Borel function $f : X \to X$ extending $f_\infty$ and defined on $X \setminus \rnc_T$ by sending each $x \in X \setminus \rnc_T$ to its successor along the unique geodesic path in $T$ from $x$ to the set $\rnc_T$.
Thus, after discarding a null set, $\rnc_T = Y = \emptyset$, contradicting that $\rnc_T$ has positive measure.
\end{proof}

\section{Characterizations of essential one-endedness}\label{subsec:coc-finite_back_orbits}

In this section, we give several equivalent characterizations of when an mcp locally countable acyclic graph $T$ is essentially one-ended.
In the case where $T=T_f$ is generated by an acyclic Borel function $f$, this gives a geometric cocycle-free characterization (\cref{char-rho-finite-back-orbits}) of all back $f$-orbits being cocycle-finite. 
The proof relies on \cref{lem:masstransport}, obtained via mass transport, in tandem with a Borel--Cantelli argument in \cref{ess_one_end_loc_fin} that reduces to the case where $f$ is finite-to-one.

\begin{lemma}\label{lem:masstransport}
Let $f : X \to X$ be an acyclic countable-to-one Borel function on a standard probability space $(X,\mu)$ such that $E_f$ is mcp, with associated Radon--Nikodym cocycle $\rho$.
Let $P:X\rightarrow [1,\infty ]$ be given by $P(x)\defeq \rho ^x (f^{-\N}(x))$. Then 
\[
\sum _{n\geq 0}\frac{1}{P(f^n(x))} <\infty
\]
for almost every $x\in X$ (where $\frac{1}{\infty}$ is interpreted as $0$). In particular, $\lim _{n\rightarrow\infty}P(f^n(x)) = \infty$ for almost every $x\in X$.
\end{lemma}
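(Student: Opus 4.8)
The plan is to prove the single inequality
\[
\int_X \sum_{n\ge 0}\frac{1}{P(f^n(x))}\,d\mu(x)\ \le\ 1 ,
\]
from which everything follows: the integrand is a nonnegative Borel function with finite integral, hence finite for $\mu$-a.e.\ $x$, which is the displayed conclusion; and for such $x$ the general term of a convergent series tends to $0$, so $P(f^n(x))\to\infty$, which is the ``in particular'' clause.

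To prove this inequality I would apply the mass transport principle for the mcp countable Borel equivalence relation $E_f$ and its Radon--Nikodym cocycle $\rho$: for every Borel $w\colon E_f\to[0,\infty]$,
\[
\int_X \Bigl(\sum_{y\mathrel{E_f}x} w(x,y)\Bigr)\,d\mu(x)\ =\ \int_X \Bigl(\sum_{y\mathrel{E_f}x} w(y,x)\,\rho(x,y)\Bigr)\,d\mu(x).
\]
Here $\rho$ is the cocycle as in the statement, so that $\rho^x=\sum_{y}\rho(x,y)\,\delta_y$ and $P(x)=\rho^x(f^{-\N}(x))$; the identity is the standard mass transport formula, obtained by decomposing $E_f$ into countably many graphs of partial Borel isomorphisms and applying the Radon--Nikodym change-of-variables formula on each piece. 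The weight to feed in is
\[
w(y,z)\ \defeq\ \frac{1}{P(z)}\cdot\mathbf{1}\bigl[\, z=f^n(y)\ \text{for some}\ n\ge 0\,\bigr].
\]
This is a legitimate Borel transport weight: $P$ is Borel with values in $[1,\infty]$, since $f^{-\N}(x)$ is countable ($f$ being countable-to-one) and $\rho(x,x)=1$ forces $P(x)\ge 1$; we read $\tfrac1\infty$ as $0$; and $\{(y,z): z=f^n(y)\ \text{for some}\ n\ge0\}=\bigcup_{n\ge0}\mathrm{graph}(f^n)\subseteq E_f$ is Borel.

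It remains to evaluate both sides for this $w$. On the left, since $f$ is acyclic the iterates $f^0(y),f^1(y),f^2(y),\dots$ are pairwise distinct (a coincidence $f^a(y)=f^b(y)$ with $a<b$ would make $f^a(y)$ periodic), so $\sum_{y\mathrel{E_f}x}w(x,y)=\sum_{n\ge0}\tfrac1{P(f^n(x))}$ and the left-hand side is exactly the integral to be bounded. On the right, $w(y,x)\ne0$ precisely when $x$ is a forward iterate of $y$, i.e.\ when $y\in f^{-\N}(x)$, so
\[
\sum_{y\mathrel{E_f}x}w(y,x)\,\rho(x,y)\ =\ \frac{1}{P(x)}\sum_{y\in f^{-\N}(x)}\rho(x,y)\ =\ \frac{\rho^x\bigl(f^{-\N}(x)\bigr)}{P(x)}\ =\ \frac{P(x)}{P(x)},
\]
which equals $1$ where $P(x)<\infty$ and equals $0$ where $P(x)=\infty$ (every term $\rho(x,y)/P(x)$ vanishes there); in either case it is at most $1$. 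Hence the right-hand side is at most $\mu(X)=1$, which is the claimed inequality.

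The argument is short, so the only real work is the bookkeeping: fixing the orientation of $\rho$ in the transport identity so that the mass received at $x$ collapses to $\rho^x(f^{-\N}(x))/P(x)$, treating $\{P=\infty\}$ correctly under the convention $\tfrac1\infty=0$, and observing that acyclicity enters exactly once --- to identify the sum over the distinct forward iterates of $x$ with the sum over $n\in\N$ on the left.
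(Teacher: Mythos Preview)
Your proof is correct and follows exactly the same mass transport argument as the paper: the weight $w(y,z)=\tfrac{1}{P(z)}\mathbf{1}[z\in f^{\N}(y)]$ is the paper's transport function $h$, the sent mass at $x$ is $\sum_{n\ge0}1/P(f^n(x))$, the received mass is $\mathbf{1}[P(x)<\infty]$, and the mass transport identity yields the integral bound $\mu(\{P<\infty\})\le 1$. Your write-up is slightly more explicit about the bookkeeping (acyclicity making the forward iterates distinct, the termwise handling of $P(x)=\infty$), but the substance is identical.
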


\begin{proof}
Define a mass transport function $h : E_f\rightarrow [0,\infty)$ by $h (x,y) = \frac{1}{P(y)}$ if $x\in f^{-\N}(y)$ and $P(y)<\infty$, and $h (x,y)=0$ otherwise. Then each $x\in X$ sends out mass $\sum _{y\in [x]_{E_f}}h (x,y)=\sum _{n\geq 0}\frac{1}{P(f^n(x))}$. The mass received by $y\in X$ is 
\[
\sum _{x\in [y]_{E_f}}h (x,y)\rho ^y(x)=\sum _{x\in f^{-\N}(y)}\frac{1}{P(y)}\rho ^y(x) = 1
\]
if $P(y)<\infty$, and is $0$ if $P(y)=\infty$. Therefore, by the mass transport principle (\cref{eq:mass_transport}),
\[
\int _X \sum _{n\geq 0}\frac{1}{P(f^n(x))} \, d\mu = \mu (\{ y: P(y) < \infty \} )\leq 1
\]
from which the conclusion follows.
\end{proof}

\begin{lemma}\label{ess_one_end_loc_fin}
Let $f : X \to X$ be an acyclic countable-to-one Borel function on a standard probability space $(X,\mu)$ such that $E_f$ is mcp.
If $f$ is one-ended, then after discarding a null set, there exists a forward $f$-invariant Borel $E_f$-complete section on which $f$ is finite-to-one.
\end{lemma}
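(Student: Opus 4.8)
The plan is to build $Y$ by pruning, at each vertex, the set of $f$‑preimages down to a finite set using the Radon--Nikodym weights reweighted by $P$, and then to verify $E_f$‑completeness of the resulting section by a first‑moment (Borel--Cantelli) estimate coming from mass transport.

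\emph{Setup.} First I would record the geometric content of one‑endedness for $T_f$: since $T_f$ is acyclic and every vertex points along $f$ towards its forward ray, a component acquires a second end exactly when it contains an infinite \emph{backward} ray $x = x_0, x_1, x_2, \dots$ with $f(x_{i+1}) = x_i$ for all $i$; hence ``$f$ one‑ended'' means (after deleting a null set) that no such ray exists, i.e.\ the relation ``$x$ is an $f$‑preimage of $y$'' is well‑founded and each rooted tree $f^{-\N}(v)$ (children of $v$ $:=$ its $f$‑preimages) has no infinite branch. From $f^{-\N}(v) = \{v\} \sqcup \bigsqcup_{x \in f^{-1}(v)} f^{-\N}(x)$ and the cocycle identity I would record the recursion
\[
P(v) \;=\; 1 + \sum_{x \in f^{-1}(v)} \rho^v(x)\, P(x),
\]
so that when $P(v) < \infty$ the ``child masses'' $\rho^v(x)P(x)$ ($x \in f^{-1}(v)$) form a convergent series. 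Finally, a mass transport computation parallel to the proof of \cref{lem:masstransport} (now transporting $\mathbf 1_Z(y)$ in place of $1/P(y)$, using acyclicity so that $f^{-\N}(y) = \bigsqcup_n f^{-n}(y)$) yields, for every Borel $Z \subseteq X$,
\[
\int_X \bigl|\{\, n \geq 0 : f^n(x) \in Z \,\}\bigr| \, d\mu(x) \;=\; \int_Z P \, d\mu .
\]

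\emph{The section.} On $\{P < \infty\}$, for each non‑leaf $v$ let $S(v)$ be the shortest initial segment of $f^{-1}(v)$ (ordered by decreasing $\rho^v(\cdot)P(\cdot)$, ties broken Borel‑measurably) for which $\sum_{x \in f^{-1}(v) \setminus S(v)} \rho^v(x)P(x) < 1$; this is well defined and Borel in $v$ by convergence. Set $Z := \{\, x : P(f(x)) < \infty,\ x \notin S(f(x)) \,\}$ and $Y := X \setminus f^{-\N}(Z) = \{\, x : f^n(x) \notin Z \text{ for all } n \geq 0 \,\}$. Then $Y$ is Borel and forward $f$‑invariant (if $f^n(x) \notin Z$ for all $n \geq 0$ then so is $f^{n+1}(x)$), and $f|_Y$ is finite‑to‑one since for $v \in Y$ one has $\{x \in Y : f(x) = v\} \subseteq S(v)$. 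For completeness, the pruning bound together with the displayed identity gives
\[
\int_X \bigl|\{\, n : f^n(x) \in Z \,\}\bigr| \, d\mu \;=\; \int_Z P\, d\mu \;=\; \int_X \Bigl( \sum_{x \in f^{-1}(v) \setminus S(v)} \rho^v(x)P(x) \Bigr)\, d\mu(v) \;\le\; 1,
\]
so for a.e.\ $x$ the forward orbit meets $Z$ finitely often; taking $m$ with $f^n(x) \notin Z$ for all $n \geq m$ gives $f^m(x) \in Y \cap [x]_{E_f}$. Discarding the (null, $f$‑backward‑saturated) exceptional set, $Y$ is the desired section.

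\emph{The obstacle.} The delicate point, and where one‑endedness really gets used, is the forward $f$‑invariant set $W := \{\, v : P(v) = \infty \,\}$ of points whose back $f$‑orbit has infinite cocycle‑mass: there the child masses need not be summable, no finite $S(v)$ makes the discarded mass $<1$, and $\int_Z P\,d\mu = \infty$ would kill the first‑moment estimate above. (That some hypothesis is needed is clear: for an infinitely‑to‑one shift $W$ is all of $X$ and the conclusion fails.) The bulk of the work is to dispose of $W$ --- by showing $\mu(W) = 0$ using one‑endedness and mass transport (infiniteness of $P$ is ``seeded'' at vertices $w$ with $P(w)=\infty$ but $f^{-\N}(w)\setminus\{w\} \subseteq \{P<\infty\}$, and these lie backward of every vertex of $W$), or otherwise by a second Borel--Cantelli argument on $W$ with a different summable weight ensuring that a.e.\ forward orbit entering $W$ is eventually left undisturbed by the pruning. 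I expect this to be the hard part.
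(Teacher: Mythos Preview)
Your argument on $\{P<\infty\}$ is correct and rather clean: the mass-transport identity $\int_X |\{n:f^n(x)\in Z\}|\,d\mu=\int_Z P\,d\mu$ together with the pruning bound does yield completeness there. But the gap you flag is real and is not a detail. Since $W=\{P=\infty\}$ is forward $f$-invariant and disjoint from your $Z$, one has $W\subseteq Y$, and on $W$ no pruning has happened; so $f|_Y$ is finite-to-one only if $\mu(W)=0$. Your sketch for disposing of $W$ is not a proof: the ``seed'' vertices exist by well-foundedness, but you give no mechanism to control their measure, and the promised ``second Borel--Cantelli argument on $W$ with a different summable weight'' is left entirely unspecified --- the whole difficulty is that on $W$ there is no cocycle-based weight available.

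The paper's proof sidesteps the cocycle (and hence $W$) completely. It uses instead the ordinal rank of the well-founded relation $<_f$: by the boundedness principle for analytic well-founded relations this rank is a countable ordinal $\beta$, and one fixes a summable sequence $(\e_\alpha)_{\alpha<\beta}$ together with a Luzin--Novikov enumeration $(g_n)$ of each fibre. At each rank level $L_\alpha$ one discards a backward-invariant set $Z_\alpha=\bigcup_{n\ge n_\alpha}f^{-\N}(g_n(L_\alpha))$ of measure $<\e_\alpha$; Borel--Cantelli gives that a.e.\ $x$ meets only finitely many $Z_\alpha$, and since the ranks along the forward orbit of $x$ are strictly increasing and cofinal in $\beta$, some $f^m(x)$ survives every pruning. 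Thus the ``different summable weight'' you were searching for is supplied by the rank stratification, not by $\rho$. (It is true that $\mu(W)=0$ follows from one-endedness --- this is essentially \labelcref{item:one-ended}$\Rightarrow$\labelcref{item:back-orbits-rho-finite} of \cref{char-rho-finite-back-orbits}, proved there via a separate lemma on nowhere forward-recurrent sets --- so establishing that first would make your cocycle-based argument an alternative route; but as written the proposal stops short of this.)
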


\begin{proof}
Note that $f$ being one-ended is equivalent to the relation $<_f$ on $X$, defined by $x_0<_f x_1$ $\Leftrightarrow$ $f^n(x_0)=x_1$ for some $n\geq 1$, being well-founded.  

By the boundedness principle for well-founded analytic relations \cite[Theorem 31.1]{bible} the rank of $<_f$ is a countable ordinal. Note that the $<_f$-rank of each $E_f$-class is a limit ordinal, so by working separately on the set of $E_f$-classes of a given rank, we may assume without loss of generality that every $E_f$-class has $<_f$-rank exactly $\beta$ for some fixed countable limit ordinal $\beta$. For each $\alpha <\beta$ let $L_{\alpha}$ be the set of points in $X$ of $<_f$-rank exactly $\alpha$. 

Fix a sequence $(\e _{\alpha})_{\alpha <\beta}$ of positive reals with $\sum _{\alpha<\beta}\e _{\alpha}<\infty$. We will define a sequence $(Z_{\alpha})_{\alpha < \beta}$ of backward $f$-invariant Borel sets with $Z_{\alpha}\subseteq f^{-\N}(L_{\alpha})$ and $\mu (Z_{\alpha})<\e _{\alpha}$ such that the set $f^{-1}(x)\setminus Z_{\alpha}$ is finite for all $x\in L_{\alpha}$, $\alpha <\beta$.

Assume first that we have constructed sets $Z_{\alpha}$, $\alpha <\beta$, with these properties and we show that, after discarding a null set, the set $X_{\beta}\defeq X\setminus \bigcup _{\alpha <\beta}Z_{\alpha}$ satisfies the conclusion of the lemma. 
It is clear that $X_{\beta}$ is a forward $f$-invariant set on which $f$ is finite-to-one, so we only need to show that $X_{\beta}$ is an $E_f$-complete section.
By the Borel--Cantelli Lemma, after discarding a null set we may assume that each $x\in X$ belongs to only finitely many of the $Z_{\alpha}$. 
Then for each $x\in X$, since the sequence of $<_f$-ranks of $f^n(x)$, $n\geq 0$, is strictly increasing and cofinal in $\beta$, it follows that there is some $n\geq 0$ such that $f^n(x)$ does not belong to any of the sets $Z_{\alpha}$, and hence $f^n(x)\in X_{\beta}$. 

We now show how to define the sets $Z_{\alpha}$ for all $\alpha < \beta$.
By Luzin--Novikov uniformization there exists a
Borel proper $\N$-coloring of the directed edges of $T_f$ (i.e., at each vertex, distinct incoming edges are assigned distinct colors), and for $x\in X$ we let $g_n(x)$ denote the unique $y\in f^{-1}(x)$ for which the directed edge from $y$ to $x$ is colored by $n$, if such an element exists. 
For a fixed $\alpha <\beta$, the sets $f^{-\N}(g_n(L_{\alpha}))$, $n\in \N$, are pairwise disjoint, so we can find a large enough $n_{\alpha}\in \N$ such that the backward $f$-invariant set $Z_{\alpha}\defeq \bigcup _{n\geq n_{\alpha}} f^{-\N}(g_n(L_{\alpha}))$ has measure less than $\e _{\alpha}$.
For each $x\in X$ the preimage $f^{-1}(x)$ is enumerated by $\{ g_n(x): x\in \mathrm{dom}(g_n)\}$, hence if $x\in L_{\alpha}$ then $f^{-1}(x)\setminus Z_{\alpha}$ is finite.
\end{proof}

\begin{theorem}\label{char-rho-finite-back-orbits}
Let $f : X \to X$ be an acyclic countable-to-one Borel function on a standard probability space $(X,\mu)$, such that $E_f$ is measure class preserving, with associated Radon--Nikodym cocycle $\rho$. 
The following are equivalent:
\begin{enumerate}[(1)]
    \item\label{item:back-orbits-rho-finite} Almost all back $f$-orbits are $\rho$-finite. 
    \item\label{item:forward_summable} There exists a strictly positive Borel function $Q : X \rightarrow (0, \infty)$ with $\sum _{n\geq 0} Q(f^n(x))<\infty$ for almost every $x\in X$.
    
    \item\label{item:one-ended} After discarding a null set there is a forward $f$-recurrent Borel $E_f$-complete section $Y\subseteq X$ for which the next return map $f_Y : Y\rightarrow Y$ is one-ended.
    
    \item\label{item:ess_one_ended} After discarding a null set there is a forward $f$-invariant Borel $E_f$-complete section $Z\subseteq X$ on which the restriction $f:Z\rightarrow Z$ is one-ended.
    
    \item\label{item:ess_one_ended_loc_fin} After discarding a null set there is a set $Z$ as in \labelcref{item:ess_one_ended} on which the restriction $f:Z\rightarrow Z$ is moreover finite-to-one.
\end{enumerate}
\end{theorem}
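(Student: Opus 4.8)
My plan is to establish the cycle of implications \labelcref{item:back-orbits-rho-finite}$\Rightarrow$\labelcref{item:forward_summable}$\Rightarrow$\labelcref{item:ess_one_ended_loc_fin}$\Rightarrow$\labelcref{item:ess_one_ended}$\Rightarrow$\labelcref{item:one-ended}$\Rightarrow$\labelcref{item:back-orbits-rho-finite}, several arrows being bookkeeping. Since all five conditions, as well as the functions $P$ and $x\mapsto\sum_{n\ge 0}Q(f^n(x))$, are $E_f$-invariant, I would first pass to an ergodic component of $\mu$ and assume $E_f$ ergodic. Then \labelcref{item:back-orbits-rho-finite}$\Rightarrow$\labelcref{item:forward_summable} is \cref{lem:masstransport} applied to $Q\defeq 1/P$ (put $Q\defeq 1$ on the null set $\{P=\infty\}$); \labelcref{item:ess_one_ended_loc_fin}$\Rightarrow$\labelcref{item:ess_one_ended} is trivial; \labelcref{item:ess_one_ended}$\Rightarrow$\labelcref{item:one-ended} holds since a forward $f$-invariant $E_f$-complete section $Z$ is forward $f$-recurrent with next-return map $f|_Z$; and \labelcref{item:ess_one_ended}$\Rightarrow$\labelcref{item:ess_one_ended_loc_fin} follows by feeding $f|_Z\colon Z\to Z$ (acyclic, countable-to-one, Borel, one-ended, with $E_f|_Z$ mcp since $\mu(Z)>0$) into \cref{ess_one_end_loc_fin} and noting that the resulting forward $f|_Z$-invariant section $Z'\subseteq Z$ is forward $f$-invariant, $E_f$-complete, finite-to-one for $f$, and one-ended as a restriction of $f|_Z$; I record this last arrow for use below.

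For \labelcref{item:forward_summable}$\Rightarrow$\labelcref{item:back-orbits-rho-finite} I would argue ergodically via the transfer operator $\hat f\phi(x)=\sum_{y\in f^{-1}(x)}\rho^x(y)\phi(y)$ on $L^1(\mu)$: the mass transport principle (\cref{eq:mass_transport}) makes $\hat f$ a Markov operator, adjoint to composition in the sense $\int(\phi\circ f^k)\psi\,d\mu=\int\phi\cdot\hat f^k\psi\,d\mu$, with $\hat f^k 1(x)=\rho^x(f^{-k}(x))$ and hence $\sum_{k\ge0}\hat f^k 1=P$. Suppose for contradiction $P=\infty$ a.e.; then $\hat f$ is conservative (and ergodic), so the Hopf--Chacon--Ornstein ratio ergodic theorem gives, for every Borel $A$ with $\mu(A)>0$, that $\bigl(\sum_{k<n}\hat f^k\mathbf 1_A\bigr)\big/\bigl(\sum_{k<n}\hat f^k 1\bigr)\to\mu(A)>0$ a.e., so $\sum_k\hat f^k\mathbf 1_A=\infty$ a.e. Taking $Q$ as in \labelcref{item:forward_summable}, which we may take bounded, the adjointness identity gives
\[
\int_A\sum_{n\ge0}Q(f^n(x))\,d\mu(x)=\int Q\cdot\Bigl(\sum_{k\ge0}\hat f^k\mathbf 1_A\Bigr)\,d\mu=\infty
\]
for every positive-measure $A$, using $Q>0$; hence $\sum_n Q(f^n(x))=\infty$ a.e., contradicting \labelcref{item:forward_summable}.

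For \labelcref{item:one-ended}$\Rightarrow$\labelcref{item:back-orbits-rho-finite}, given $Y$ I would first apply \cref{ess_one_end_loc_fin} to the next-return map $f_Y\colon Y\to Y$ to pass to a forward $f_Y$-invariant complete section $Y'\subseteq Y$ on which $f_Y$ is finite-to-one; since the $f$-next-return map of $Y'$ is still $f_Y|_{Y'}$ (finite-to-one and one-ended), I may assume $f_Y$ is finite-to-one. Then $<_{f_Y}$ is well-founded and finitely branching, so $f_Y^{-\N}(y)$ is finite for all $y\in Y$ by K\"onig's lemma. Letting $\pi(x)\in Y$ be the first point of the forward $f$-orbit of $x$ in $Y$, every $w\in f^{-\N}(x)$ has $\pi(w)\in f_Y^{-\N}(\pi(x))$, so $f^{-\N}(x)\subseteq\bigcup_{z\in f_Y^{-\N}(\pi(x))}\pi^{-1}(z)$ and therefore $\rho^x(f^{-\N}(x))\le\sum_{z\in f_Y^{-\N}(\pi(x))}\rho^x(z)\,\rho^z(\pi^{-1}(z))$, a finite sum. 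As $\{\pi^{-1}(z):z\in Y\}$ partitions $X$, mass transport gives $\int_Y\rho^z(\pi^{-1}(z))\,d\mu(z)=\mu(X)<\infty$, so $\rho^z(\pi^{-1}(z))<\infty$ for a.e.\ $z$ and, by mcp, for all $z$ in a.e.\ class; the displayed bound then yields $\rho^x(f^{-\N}(x))<\infty$ a.e.

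The remaining arrow \labelcref{item:forward_summable}$\Rightarrow$\labelcref{item:ess_one_ended_loc_fin} is the heart, and the main obstacle. Using \labelcref{item:forward_summable}$\Rightarrow$\labelcref{item:back-orbits-rho-finite} I may also assume $P<\infty$ a.e. Set $S(x)\defeq\sum_{n\ge0}Q(f^n(x))$ and discard the null invariant set $\{S=\infty\}$: $S$ is strictly positive Borel, strictly decreasing along forward $f$-orbits, with $S(f^n(x))\downarrow0$. Hence $\{S>1\}$ meets every forward orbit in a finite initial segment, and, since $S$ is strictly increasing along any infinite backward $f$-ray, $\{S>1\}$ meets every such ray along which $S$ is unbounded. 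The plan is to deal with the remaining rays — those along which $S$ stays bounded, equivalently $\sum_i Q(x_i)<\infty$, the ``finite-height'' ends — by a Borel--Cantelli argument in the spirit of \cref{ess_one_end_loc_fin}: fix a Borel proper $\N$-coloring of the directed edges of $T_f$ by Luzin--Novikov, use the level sets of $S$ as a proper height function in place of the $<_f$-rank, and at each scale discard all but finitely many incoming branches, choosing the backward-closed discarded sets to have summable measure (this is where $\sum_n Q(f^n(x))<\infty$ enters, controlling the $Q$-mass hanging below each scale), so that Borel--Cantelli forces a.e.\ point into the retained forward $f$-invariant Borel $E_f$-complete section $Z\defeq X\setminus f^{-\N}(C)$; one then checks $f|_Z$ is one-ended and applies \labelcref{item:ess_one_ended}$\Rightarrow$\labelcref{item:ess_one_ended_loc_fin} to make $f$ finite-to-one on it. I expect the delicate point to be organizing the discarded sets at the various scales to be simultaneously sparse enough for the measure estimate and exhaustive enough to catch every finite-height backward end.
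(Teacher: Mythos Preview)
Your cycle breaks at the arrow \labelcref{item:forward_summable}$\Rightarrow$\labelcref{item:ess_one_ended_loc_fin}, which you yourself flag as ``the heart'' and for which you offer only a plan, not a proof. Since this is the sole arrow in your scheme leading from $\{\labelcref{item:back-orbits-rho-finite},\labelcref{item:forward_summable}\}$ into $\{\labelcref{item:one-ended},\labelcref{item:ess_one_ended},\labelcref{item:ess_one_ended_loc_fin}\}$, the equivalence is not established. The specific difficulty with your Borel--Cantelli plan is precisely the ``finite-height'' backward rays $(x_i)$ with $f(x_{i+1})=x_i$ and $\sum_i Q(x_i)<\infty$: the level sets of $S$ do not give a well-founded rank (these rays are the obstruction), so the mechanism of \cref{ess_one_end_loc_fin}---which relies on a well-founded $<_f$-rank to organize the discarding---has no direct analogue. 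Along such a ray $S(x_i)\uparrow L<\infty$ for some arbitrary $L$, and the ray never reaches level $L$, so there is no natural scale at which your scheme would catch it; ``exhaustive enough to catch every finite-height backward end'' is exactly the missing idea.

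The paper sidesteps this by proving \labelcref{item:forward_summable}$\Rightarrow$\labelcref{item:one-ended} instead, via an explicit one-line construction: take $Y\defeq\{y:S(y)\le 1\text{ and }Q(y)>Q(f^n(y))\text{ for all }n\ge 1\}$. Forward recurrence and completeness follow from $S(f^n(x))\downarrow 0$ and $Q(f^n(x))\to 0$. One-endedness of $f_Y$ is the point: along any backward $f_Y$-ray $y_0,y_1,\ldots$ in $Y$ one has $Q(y_0)<Q(y_1)<\cdots$, whence $S(y_m)\ge\sum_{i\le m}Q(y_i)>(m+1)Q(y_0)\to\infty$, contradicting $S\le 1$ on $Y$. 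Then \labelcref{item:one-ended}$\Rightarrow$\labelcref{item:ess_one_ended_loc_fin} follows from \cref{ess_one_end_loc_fin} applied to $f_Y$, taking the convex hull of the resulting section. The trick---forcing $Q$ to strictly increase along any backward ray in $Y$, which is incompatible with summability of $Q$ along that ray---is exactly what kills the finite-height ends your plan struggles with.

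Your remaining arrows are fine and in places give pleasant alternatives. Your \labelcref{item:one-ended}$\Rightarrow$\labelcref{item:back-orbits-rho-finite} via K\"onig and the mass-transport identity $\int_Y\rho^z(\pi^{-1}(z))\,d\mu(z)=\mu(X)$ is a self-contained substitute for the paper's appeal to an earlier proposition. The extra \labelcref{item:forward_summable}$\Rightarrow$\labelcref{item:back-orbits-rho-finite} via Chacon--Ornstein becomes redundant once you have \labelcref{item:forward_summable}$\Rightarrow$\labelcref{item:one-ended}, and as written it has a loose end: you assume $P=\infty$ a.e.\ under $E_f$-ergodicity, but $\{P=\infty\}$ is only forward $f$-invariant, not $E_f$-invariant, so ergodicity of $E_f$ does not by itself yield this dichotomy; one would have to restrict to the conservative part of $\hat f$ and control the invariant $\sigma$-algebra there.
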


\begin{proof}
The implications \labelcref{item:ess_one_ended_loc_fin}$\Rightarrow$\labelcref{item:ess_one_ended}$\Rightarrow$\labelcref{item:one-ended} are trivial. We will prove \labelcref{item:back-orbits-rho-finite}$\Rightarrow$\labelcref{item:forward_summable}$\Rightarrow$\labelcref{item:one-ended}$\Rightarrow$\labelcref{item:ess_one_ended_loc_fin} and \labelcref{item:one-ended}$\Rightarrow$\labelcref{item:back-orbits-rho-finite}.

\labelcref{item:back-orbits-rho-finite}$\Rightarrow$\labelcref{item:forward_summable}: If \labelcref{item:back-orbits-rho-finite} holds then the function $Q(x)\defeq 1/\rho ^x(f^{-\N}(x))$ is strictly positive almost everywhere and $\sum _{n\geq 0}Q(f^n(x))<\infty$ almost surely by
\cref{lem:masstransport}, so \labelcref{item:forward_summable} holds.

\labelcref{item:forward_summable}$\Rightarrow$\labelcref{item:one-ended}: Assume that \labelcref{item:forward_summable} holds. We may assume that the sum $S(x)\defeq\sum _{n\geq 0}Q(f^n(x))$ is finite for all $x\in X$.  Take $Y$ to be the set of all $y\in X$ satisfying $S(y)\leq 1$ and $Q(y)>Q(f^n(y))$ for all $n\geq 1$. 
The set $Y$ is a forward $f$-recurrent $E_f$-complete section since $\lim _{n\rightarrow\infty}S(f^n(x))= 0$ and $\lim _{n\rightarrow\infty} Q(f^n(x))=0$ for all $x\in X$. 
It remains to show that the next return map $f_Y:Y\rightarrow Y$ is one-ended. Suppose otherwise, so that there exists a sequence $y_0,y_1,y_2,\dots$ of distinct points in $Y$ with $y_{m+1} \in f^{-\N}(y_m)$ for all $m$. On the one hand we have $S(y_m)\leq 1$ for all $m$, but on the other hand by definition of $Y$ we have $Q(y_0)<Q(y_1)<\cdots$ and hence 
\[
S(y_m)\geq \sum _{i=0}^m Q(y_i)> (m+1)Q(y_0)\rightarrow\infty \; \text{ as $m \to \infty$},
\]
a contradiction, so \labelcref{item:one-ended} holds.

\labelcref{item:one-ended}$\Rightarrow$\labelcref{item:ess_one_ended_loc_fin}: Assume that \labelcref{item:one-ended} holds. 
By applying \cref{ess_one_end_loc_fin} to $f_Y:Y\rightarrow Y$ and the normalized restriction of $\mu$ to $Y$ we obtain, after discarding a null set, a forward $f$-recurrent Borel $E_f$-complete section $Y_0\subseteq Y$ for which the next return map $f_{Y_0}:Y_0\rightarrow Y_0$ is one-ended and finite-to-one, and hence has finite back-orbits. 
Taking $Z$ to be the convex hull of $Y_0$ in $T_f$ shows that \labelcref{item:ess_one_ended_loc_fin} holds.

\labelcref{item:one-ended}$\Rightarrow$\labelcref{item:back-orbits-rho-finite}: Assume again that \labelcref{item:one-ended} holds.
Since $Y$ is forward $f$-recurrent, to show \labelcref{item:back-orbits-rho-finite} holds it is enough to show that the set $Y_{\infty} \defeq \rnc_f \cap Y$ is null. 
Let $L_{\infty}$ consist of all points in $Y_{\infty}$ whose back $f$-orbit contains no other point of $Y_{\infty}$. Since $f_Y$ is one-ended, the back $f$-orbit of each point of $Y_{\infty}$ meets $L_{\infty}$, so it is enough to show that $L_{\infty}$ is null. This follows from \cref{rho-infinite_back-orbits=>on_all_rho-infinite_ends}\labelcref{rho-infinite_back-orbits=>on_all_rho-infinite_ends:forward_recurrence}, since $L_{\infty}$ is nowhere forward $f$-recurrent.
\end{proof}


\begin{remark}\label{remark:essential_one-ended_characterization}
Each of the conditions in \cref{char-rho-finite-back-orbits} other than the first has a purely Borel counterpart. The conditions \labelcref{item:forward_summable} and \labelcref{item:one-ended} are equivalent in the purely Borel setting. Indeed, the above proof already shows that the implication \labelcref{item:forward_summable}$\Rightarrow$\labelcref{item:one-ended} holds in the pure Borel setting. For the reverse implication, assuming \labelcref{item:one-ended}, fix a summable sequence $(\e_\alpha)_{\alpha < \beta}$, where $\beta$ is the rank of the analytic relation $<_{f_Y}$, as defined in the proof of \cref{ess_one_end_loc_fin}. 
Since the equivalence relation on $X$ of having the same image under $f_Y$ is smooth, we can easily define a Borel function $Q : X \to (0, \infty)$ satisfying $\sum_{x \in f_Y^{-1}(y)} Q(x) \le \e_\alpha$ for every $y \in Y$ of $<_{f_Y}$-rank $\alpha$, for all $\alpha < \beta$. This implies \labelcref{item:forward_summable}.

However, the other nontrivial implications \labelcref{item:one-ended}$\Rightarrow$\labelcref{item:ess_one_ended} and \labelcref{item:ess_one_ended}$\Rightarrow$\labelcref{item:ess_one_ended_loc_fin} do not hold even in the Baire category setting, i.e.\ after discarding a meager set with respect to a given Polish topology. 
Indeed, one may show via localization arguments that the Baire category analogues of implications \labelcref{item:one-ended}$\Rightarrow$\labelcref{item:ess_one_ended} and \labelcref{item:ess_one_ended}$\Rightarrow$\labelcref{item:ess_one_ended_loc_fin} fail, respectively, for the shift map on the space of all strictly decreasing sequences of rationals in $(0, 1)$, and the shift map on the space of all increasing sequences of ordinals in $\omega^2$.
\end{remark}

\begin{cor}[Radon--Nikodym vertex core dichotomy]
    Let $f : X \to X$ be an acyclic countable-to-one Borel function on a standard probability space $(X,\mu)$ such that $E_f$ is mcp, with associated Radon--Nikodym cocycle $\rho : E_f \to \Rpos$. 
    Suppose $E_f$ is $\mu$-ergodic.
    \begin{enumerate}[(a)]
        \item\label{part:RN-core_complete_section} If $\rnc_f$ is an a.e.\ $E_f$-complete section, then $\rnc_f$ is a.e.-contained in $Z$ for every $T_f$-convex Borel $E_f$-complete section $Z$.

        \item\label{part:RN-core_empty} If $\rnc_f$ is null, then there exists a vanishing sequence of forward $f$-invariant Borel $E_f$-complete sections on which $f$ is finite-to-one.
    \end{enumerate}
\end{cor}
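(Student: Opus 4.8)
The plan is to first see why $\rnc_f$ falls into a dichotomy and then handle the two cases. Recall that $x\in\rnc_f$ exactly when the back $f$-orbit of $x$ is $\rho$-infinite, i.e.\ $\rho^x(f^{-\N}(x))=\infty$. First I would note that $\rnc_f$ is forward $f$-invariant: the measures $\rho^{f(x)}$ and $\rho^x$ on the common $E_f$-class are positive scalar multiples of each other by the cocycle identity for $\rho$, and $f^{-\N}(x)\subseteq f^{-\N}(f(x))$, so $\rho^x(f^{-\N}(x))=\infty$ forces $\rho^{f(x)}(f^{-\N}(f(x)))=\infty$. Hence $[\rnc_f]_{E_f}$ is $E_f$-invariant, so by ergodicity $\rnc_f$ is either null, which is case \labelcref{part:RN-core_empty}, or an a.e.\ $E_f$-complete section, which is case \labelcref{part:RN-core_complete_section}; forward invariance also makes $\rnc_f$ itself $T_f$-convex. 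I will use throughout that each $E_f$-class is a subtree of $T_f$ with a single ``upward'' end, all forward orbits inside a class being eventually equal.

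For part \labelcref{part:RN-core_empty}: $\rnc_f$ being null is precisely condition \labelcref{item:back-orbits-rho-finite} of \cref{char-rho-finite-back-orbits}, so by the implication \labelcref{item:back-orbits-rho-finite}$\Rightarrow$\labelcref{item:ess_one_ended_loc_fin} of that theorem, after discarding a null set we may fix a forward $f$-invariant Borel $E_f$-complete section $Z_0$ on which $f$ is one-ended and finite-to-one; as a finitely branching well-founded tree is finite, the back $f$-orbit within $Z_0$ of each point of $Z_0$ is finite. The $E_f|_{Z_0}$-classes are infinite, so fix a vanishing sequence $Z_0=W_0\supseteq W_1\supseteq W_2\supseteq\cdots$ of Borel $E_f|_{Z_0}$-complete sections and put $Z_k\defeq\bigcup_{j\geq 0}f^j(W_k)$. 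Each $Z_k$ is Borel by Luzin--Novikov, forward $f$-invariant, contained in $Z_0$ and hence one on which $f$ is finite-to-one, and contains $W_k$ and hence is an $E_f$-complete section; moreover $Z_0\supseteq Z_1\supseteq\cdots$ and $\bigcap_k Z_k=\emptyset$, since $x\in\bigcap_k Z_k$ would give for each $k$ some $w_k\in W_k$ with $x=f^{j_k}(w_k)$, so all $w_k$ lie in the finite back $f$-orbit of $x$ within $Z_0$, whence some value $w$ recurs for infinitely many $k$ and $w\in\bigcap_k W_k=\emptyset$, a contradiction.

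For part \labelcref{part:RN-core_complete_section}: fix a $T_f$-convex Borel $E_f$-complete section $Z$. The crucial first step is that $Z$ is forward $f$-invariant after discarding a null set. Consider $D\defeq\{y\in Z:f(y)\notin Z\}$. If $y\neq y'$ lie in $D$ in a common $E_f$-class, then the $T_f$-geodesic from $y$ to $y'$, which passes through $f(y)$ or $f(y')$, lies in $Z$ by convexity, contradicting membership in $D$; so $D$ meets each $E_f$-class in at most one point, i.e.\ $D$ is a Borel partial transversal of $E_f$. By ergodicity $\mu([D]_{E_f})\in\{0,1\}$, and it is not $1$, since otherwise $D$ would be an a.e.\ Borel transversal of the ergodic nonatomic $E_f$, which is impossible; hence $\mu(D)=0$, and discarding a null set we may take $Z$ forward $f$-invariant. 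Then $Z$ meets each class in a forward-invariant convex subtree, which consequently contains the upward end of that class. Now set $B\defeq\rnc_f\setminus Z$, a Borel subset of $\rnc_f$. For $x\in B$, the forward $f$-orbit of $x$ stays in $\rnc_f$ and converges to the upward end of $[x]_{E_f}$, which lies in $Z$; thus it eventually enters $Z$, and then remains there by forward invariance of $Z$. So the forward $f$-orbit of $x$ meets $B$ only finitely often; that is, $B$ is a nowhere forward $f$-recurrent Borel subset of $\rnc_f$, and hence $\mu(B)=0$ by \cref{rho-infinite_back-orbits=>on_all_rho-infinite_ends}\labelcref{rho-infinite_back-orbits=>on_all_rho-infinite_ends:forward_recurrence}, which is the assertion of \labelcref{part:RN-core_complete_section}.

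The step I expect to be the main obstacle is the forward-invariance reduction in part \labelcref{part:RN-core_complete_section}: one must see via convexity that a failure of forward invariance yields a Borel partial transversal, invoke non-smoothness of the ergodic nonatomic $E_f$, and then carry the usual ``discard a null set'' bookkeeping through --- in particular passing to a conull $E_f$-invariant set on which $Z$, and simultaneously $\rnc_f$, are genuinely forward invariant. Everything else rests on \cref{char-rho-finite-back-orbits}, on standard marker constructions, and on \cref{rho-infinite_back-orbits=>on_all_rho-infinite_ends}.
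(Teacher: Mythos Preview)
Your argument is correct. For part \labelcref{part:RN-core_empty} you and the paper both invoke the implication \labelcref{item:back-orbits-rho-finite}$\Rightarrow$\labelcref{item:ess_one_ended_loc_fin} of \cref{char-rho-finite-back-orbits}; the paper then produces the vanishing sequence by ``successive prunings'' (essentially $Z_k \defeq f^k(Z_0)$), while you push forward a vanishing marker sequence $(W_k)$ --- a harmless variation resting on the same finite-back-orbit observation. For part \labelcref{part:RN-core_complete_section}, however, the routes diverge: the paper simply cites \cref{rho-infinite_back-orbits=>on_all_rho-infinite_ends}\labelcref{rho-infinite_back-orbits=>on_all_rho-infinite_ends:convex-hall=X}, which absorbs the convexity hypothesis directly, whereas you first reduce convexity to forward $f$-invariance by hand (showing the exit set $D=\{y\in Z:f(y)\notin Z\}$ is a partial transversal and killing it via ergodicity and non-smoothness) and only then appeal to the \emph{forward-recurrence} part \labelcref{rho-infinite_back-orbits=>on_all_rho-infinite_ends:forward_recurrence} of that same proposition. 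Your route is more self-contained but longer, and it quietly uses that $\mu$ is nonatomic to rule out a Borel transversal; the paper's one-line citation avoids this detour entirely.
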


\begin{proof}
    Part \labelcref{part:RN-core_complete_section} follows immediately from
    \cref{rho-infinite_back-orbits=>on_all_rho-infinite_ends}\labelcref{rho-infinite_back-orbits=>on_all_rho-infinite_ends:convex-hall=X}, while part \labelcref{part:RN-core_empty} is derived from the implication \labelcref{item:back-orbits-rho-finite}$\Rightarrow$\labelcref{item:ess_one_ended_loc_fin} of \cref{char-rho-finite-back-orbits} via successive prunings.
\end{proof}



\section{Cocycle-finiteness of back geodesics and vanishing back ends}

Throughout this section we work under the following hypotheses:

\begin{workinghyp}\label{running_hypothesis_function}
$f : X \to X$ is an acyclic countable-to-one Borel function on a standard probability space $(X,\mu)$, with $E_f$ mcp and having associated Radon--Nikodym cocycle $\rho : E_f \to \Rpos$.
\end{workinghyp}
This section analyzes the back ends of $f$. We are mainly concerned with the case where $f$ is \emph{$\mu$-nowhere essentially two-ended}, by which we mean that there is no positive measure $T_f$-convex Borel set on which $T_f$ is two-ended.

\subsection{Cocycle-finiteness of back-geodesics}

Given a function $f$ as in \cref{running_hypothesis_function} and a point $x\in X$, we define the \dfn{backward geodesic weight of $f$ at $x$} to be the quantity
\[
\Sigma_f(x)\defeq \sup \{ \rho ^x([x,v]_{T_f}) : v\in f^{-\N}(x) \} .
\]
This subsection is concerned with finiteness of the function $\Sigma_f$. Given a positive integer $n$ we say that $f$ is \emph{at least $n$-to-$1$} if $|f^{-1}(x)|\geq n$ for all $x\in X$.

\begin{lemma}\label{5-to-1}
Let $f$ be as in \cref{running_hypothesis_function}. Suppose that $f$ is at least $5$-to-$1$. Then the set of all $x\in X$ for which $\Sigma_f(x)<\infty$ is an a.e.\ $E_f$-complete section.
\end{lemma}
\begin{proof}
    Suppose otherwise, and without loss of generality assume that $\Sigma_f(x)=\infty$ for all $x\in X$. Then for every point $x \in X$, there is a point $v_0 \defeq v_0(x) \in f^{-\N}(x)\setminus \{ x \}$ such that $\rho^x((x,v_0]_{T_f}) \ge 1$ while $\rho^x((x,v_0)_{T_f}) < 1$. Moreover, there is another such point $v_1 \defeq v_1(x)$ such that $x$ lies on the geodesic $V_x \defeq [v_0,v_1]_{T_f}$ from $v_0$ to $v_1$ (for example, start with any $y\in f^{-1}(x)\setminus (x,v_0]_{T_f}$ and use that $\Sigma_f(y)=\infty$). By Luzin--Novikov uniformization, we may assume that the maps $v_0$ and $v_1$ are Borel, and hence that the map $x \mapsto V_x$ is also Borel. Let $E_V$ be the equivalence relation generated by the binary relation $\{ (x,y) : y \in V_x \}$.
    
    \begin{claim*}
    Each $E_V$-class is $\rho$-infinite, and hence $E_V$ is $\mu$-nowhere smooth.
    \end{claim*}
    
    \begin{pf}
    Fix $x \in X$. We will recursively define a sequence $(P_x^n)_{n \ge 0}$ of pairwise disjoint subsets of $[x]_{E_V} \cap f^{-\N}(x)$, beginning with $P_x^0=\{ x \}$, where at each stage $n\geq 1$ we will define $P^n_x$ to be the union, $P_x^{n} \defeq \bigcup_{y \in P_x^{n-1}} p_y$, of $T_f$-paths $p_y$, for $y \in P_x^{n-1}$, with the following properties:
    
    \begin{enumerate}[(i.$n$)]
    \item $p_y = (y,v_i(y)]_{T_f}$ for some $i \in \set{0,1}$, for every $y \in P_x^{n-1}$.
    
    \item 
    for distinct $y_0,y_1 \in P_x^{n-1}$ the sets $f^{-\N}(p_{y_0})$ and $f^{-\N}(p_{y_1})$ are disjoint. 
    
    \item $f^{-\N}(P_x^n)$ is disjoint from $\bigcup_{m < n} P_x^m$.
    
    \item $\rho^x(P_x^n) \geq 1$.
    \end{enumerate}
    
    Given this construction, it follows that $\bigcup_{n \ge 0} P_x^n$ is $\rho$-infinite, and thus so is $[x]_{E_V}$.
    
    Turning to the construction, define $P_x^0 \defeq \set{x}$ and $p_x \defeq (x,v_0(x)]_{T_f}$. Then $P_x^1=p_x$, and (i.1)--(iv.1) are clearly satisfied. Let $n \ge 1$ and assume $P_x^n = \bigcup_{y \in P_x^{n-1}} p_y$ is defined and satisfies (i.$n$)--(iv.$n$). Then each $z \in P_x^n$ belongs to $p_y$ for a unique $y \in P_x^{n-1}$ and our choice of $v_0(z)$ and $v_1(z)$ ensures that at least one of the paths $(z,v_0(z)]_{T_f}$ and $(z,v_1(z)]_{T_f}$ is disjoint from $p_y$; we choose such a path and define it to be our $p_z$, thereby fulfilling (i.$n+1$). Let $P_x^{n+1} \defeq \bigcup_{z \in P_x^n} p_z$.
    
    
    To show (ii.$n+1$), let $z_0,z_1 \in P_x^n$ and suppose that $f^{-\N}(p_{z_0})$ meets $f^{-\N}(p_{z_1})$. Because of (ii.$n$), the points $z_0$ and $z_1$ must both lie on $p_y$ for some $y \in P_x^n$. But the sets $f^{-\N}(p_z)$, $z \in p_y$, are pairwise disjoint by their construction, so $z_0 = z_1$.
    
    By (i.$n+1$) and (ii.$n+1$), the set $f^{-\N}(P_x^{n+1})$ is disjoint from $P_x^n$ and is contained in $f^{-\N}(P_x^n)$, so together with (iii.$n$) this implies (iii.$n+1$).
    
    As for (iv.$n+1$), for each $z\in P_x^n$ our choice of $v_i(z)$ ($i=0,1$) ensures that $\rho^x(p_z) \ge \rho^x(z)$, hence $\rho^x(P_x^{n+1}) = \sum_{z \in P_x^n} \rho^x(p_z) \ge \rho^x(P_x^n) \ge 1$ by (iv.$n$).
    \end{pf}
    
    Since each $E_V$-class is $T_f$-convex, $E_V$ is smooth on the union of all $E_V$-classes that are not forward $f$-invariant, hence this union is null by the claim. Thus, after discarding an $E_f$-invariant null set, each $E_V$-class is forward $f$-invariant, which implies $E_V = E_f$.
    
    For each $x \in X$, let $\partial V_x \defeq \{ v_0(x), v_1(x)\}$ and $V_x' \defeq (x,v_0(x))_{T_f} \cup (x,v_1(x))_{T_f}$. We define a mass transport $\phi : E_f \to [0, 1]$ by 
    \[
    \phi(x,y) \defeq 
    \begin{cases}
    \rho^x(y) & \text{if } y \in V_x'
    \\
    1 - \rho^x \big((x, y)_{T_f}\big) & \text{if } y \in \partial V_x
    \\
    0 & \text{otherwise},
    \end{cases}
    \]
    where the definitions of $v_0(x)$ and $v_1(x)$ ensure that the value $1 - \rho^x \big((x, y)_{T_f}\big)$ in the second case is positive.
    Each $x \in X$ sends out mass exactly $\sum_{y \in [x]_{E_f}} \phi(x,y) = 2$. 
    For each $y \in X$, letting $W_y \defeq \set{x \in X : y \in V_x'}$, the relative mass received by $y$ is 
    \begin{equation}\label{eq:mass-in}
    \sum_{x \in [y]_{E_f}} \phi(x,y) \rho^y(x) \ge \sum_{x \in W_y} \rho^x(y) \rho^y(x) = |W_y|.
    \end{equation}
For each $x\in W_y$, the set $V_x$ meets $f^{-1}(y)$ in exactly one point which we denote by $z_x$. Moreover, the image of $W_y$ under the map $x\mapsto z_x$ contains $f^{-1}(y)\setminus V_y$. Indeed, since $[y]_{E_V} = [y]_{E_f} \supseteq f^{-1}(y)$, for each $z \in f^{-1}(y)$ there must be some point $x$ with $y\in V_x-\partial V_x$ and $z\in V_x$, so if additionally $z\not\in V_y$ then $y\neq x$ and hence $x\in W_y$ and $z=z_x$. It follows that $|W_y| \ge |f^{-1}(y)\setminus V_y| \ge 5 - 2 = 3$. By \labelcref{eq:mass-in}, this contradicts the mass transport principle (\cref{eq:mass_transport}).
\end{proof}

\begin{theorem}[Cocycle-finiteness of back-geodesics]\label{summable_back-rays}
Let $f$ be as in \cref{running_hypothesis_function}, and assume in addition that $f$ is $\mu$-nowhere essentially two-ended. Then $\Sigma_f(x) <\infty$ for a.e.\ $x\in X$. 
In particular, for a.e.\ $x\in X$, every back $f$-end of $T_f \rest{[x]_{T_f}}$ has $\rho$-finite geodesics.
\end{theorem}
\begin{proof}
    Assume toward a contradiction that the set $C$ of all $x\in X$ with $\Sigma_f(x)=\infty$ has positive measure. 
    We may assume without loss of generality that $C$ is an $E_f$-complete section. 
    Since $C$ is a forward $f$-invariant Borel $E_f$-complete section contained in the directed Radon--Nikodym vertex core $\rnc_f$, after discarding a null set we may assume that $C=\rnc_f$ by \cref{rho-infinite_back-orbits=>on_all_rho-infinite_ends}\labelcref{rho-infinite_back-orbits=>on_all_rho-infinite_ends:convex-hall=X}. 
    Considering $E_f \rest{C}$ and $C$ with the push-forward of $\mu$ under the retraction $r_{C,f}$ to $C$ along $f$, so that the associated Radon--Nikodym cocycle is given by $(y,z) \mapsto \rho^y(r_{C,f}^{-1}(z)) / \rho^y(r_{C,f}^{-1}(y))$, it follows that $\Sigma_{f\rest{C}}(x)$ is still infinite and the $f \rest{C}$-back orbit of $x$ is cocycle-infinite for each $x\in C$. 
    Therefore, after replacing $X$ by $\rnc_f$, we may as well assume that $X=\rnc_f$ and that $\Sigma_f(x) = \infty$ for all $x \in X$.
    Then, by \cref{Core-bi-inf-or-perfect}, since $f$ is $\mu$-nowhere essentially two-ended, after discarding another null set we may assume that each component of of $T_f$ is a leafless tree with no isolated ends.

    Assume first that we can find a forward $f$-recurrent Borel complete section $Y$ for $E_f$ such that the next return map $f_Y$ is at least $5$-to-$1$, and we show how to complete the proof. 
    Let $\mu_Y$ be the push-forward of $\mu$ to $Y$ under the retraction $r_{Y,f}$ to $Y$ along $f$. 
    Let $\rho_Y$ be the Radon--Nikodym cocycle of $E_{f_Y}$ with respect to $\mu_Y$, namely, $\rho_Y(y,z) = \rho^y(r_{Y,f}^{-1}(z)) / \rho^y(r_{Y,f}^{-1}(y))$ for all $(y,z) \in E_{f_Y}$. Then the map $f_Y$, considered on the probability space $(Y, \mu_Y)$, satisfies that every point $y \in Y$ has $\Sigma_{f_Y}(y) = \infty$, contradicting \cref{5-to-1}.

    To find such a set $Y$, first let $Z$ be the set of points of $X$ with $T_f$-degree at least $3$ and observe that, since each component of $T_f$ is leafless with no isolated ends, each end of $T_f$ is an accumulation point of $Z$.
    In particular, the next return map $f_Z$ is at least $2$-to-$1$. 
    Let $T_Z$ be the graph generated by $f_Z$ and let $T_Z^{\le 2}$ be the graph on $Z$ where two vertices are adjacent if they are distinct and their $T_Z$-distance is at most $2$. Finally, let $Y$ be a Borel maximal $T_Z^{\le 2}$-independent subset of $Z$.
    By \cref{rho-infinite_back-orbits=>on_all_rho-infinite_ends}\labelcref{rho-infinite_back-orbits=>on_all_rho-infinite_ends:forward_recurrence}, after discarding a null set $Y$ is forward $f$-recurrent, and it is easy to see that $f_Y$ is at least $8$-to-$1$.
\end{proof}

\subsection{Cocycle-vanishing of back-ends}

Let $f$ be as in \cref{running_hypothesis_function}, and assume that $f$ is $\mu$-nowhere essentially two-ended. While the previous section describes the behaviour of the cocycle along back-geodesics, the goal of this subsection is to understand the limiting behaviour of the cocycle at back-ends.
\cref{summable_back-rays} in particular implies that the cocycle converges to $0$ along every back-geodesic, and we show in this section that it in fact vanishes along \emph{any} sequence of vertices converging to a back-end. 

\begin{lemma}\label{having_bigger_points_behind}
Let $f$ be as in \cref{running_hypothesis_function}. Assume that for every $x \in X$ there is some $z \in f^{-\N}(x) \setminus \set{x}$ with $\rho^\bullet (z) \geq \rho^\bullet (x)$. Then there is a $\mu$-conull $E_f$-invariant Borel set on which $f$ is bijective.
\end{lemma}

\begin{proof}
The hypothesis implies that $X=\rnc_f$ and moreover $\Sigma_f(x)=\infty$ for all $x\in X$. Thus, by \cref{summable_back-rays} and measure-theoretic exhaustion we see that $f$ is $\mu$-essentially two-ended. 
But since $X=\rnc_f$, \cref{rho-infinite_back-orbits=>on_all_rho-infinite_ends}\labelcref{rho-infinite_back-orbits=>on_all_rho-infinite_ends:convex-hall=X} implies that $f$ is bijective a.e.
\end{proof}

\begin{notation}
	For metric spaces $Y,Z$, a set $D \subseteq Y$, points $y_\w \in Y$, $z_\w \in Z$, and a function $F : Y \to Z$, we write $\lim_{y \to_D y_\w} F(y) = z_\w$ if for any sequence $(y_n) \subseteq D$ converging to $y_\w$ in $Y$, the sequence $(F(y_n))$ converges to $z_\w$ in $Z$. The notation $\limsup_{y \to_D y_\w}$ and $\liminf_{y \to_D y_\w}$ is defined analogously.
\end{notation}

\begin{lemma}\label{lem:back_decreasing}
	Let $f$ be as in \cref{running_hypothesis_function}, and let $T\defeq T_f$.
    Assume in addition that $\rnc_f=X$ and that $f$ is $\mu$-nowhere essentially two-ended. Suppose that $D \subseteq X$ is a Borel $E_f$-complete section such that $\rho^\bullet (x) \leq \rho^\bullet (f_D(x))$ for every $x \in D$. Then for a.e.\ $x\in X$, for every back $f$-end $\eta \in \del_T(f^{-\N}(x))$ we have
	\[
	\lim_{y \to_D \eta} \rho ^x (y) = 0 .
	\]
	Moreover, for a.e.\ $x \in X$ we have
	\[
	\lim_{n\to\infty}\sup \{ \rho ^x(y) : y\in D\cap f^{-[n,\infty )}(x)\} = 0.
	\]
\end{lemma}

\begin{proof}
By \cref{smooth=rho-finite}\labelcref{part:coc-finite_preimages}, after discarding a null set we may assume that the maps $r_{D,f}$ and $f_D$ are both $\rho$-finite-to-one. 
By \cref{rho-infinite_back-orbits=>on_all_rho-infinite_ends}\labelcref{rho-infinite_back-orbits=>on_all_rho-infinite_ends:forward_recurrence} we may also assume that the $D$ is forward $f$-recurrent (so $r_{D,f}$ is defined on all of $X$, and $f_D$ on all of $D$) and that $\Conv _T (D)=X$. 
For each $x \in X$ and $\eta \in \del_T(f^{-\N}(x))$ define $\overline{\rho}_D ^x(\eta) \defeq \limsup _{y \to_D \eta} \rho ^x(y)$.

\begin{claim+}\label{claim:meetgeodesic}
For each $x\in D$ and $\eta \in \del_T (f^{-\N}(x))$ with $\overline{\rho}_D^x(\eta )>0$, the intersection $D\cap [x,\eta )_T$ is infinite and $\rho ^x (\cdot )$ converges to $\overline{\rho}_D^x(\eta )$ along this set.
\end{claim+}

\begin{pf}
Since $\rho$ is nondecreasing in the $f$-direction along $D$, it is enough to show that $D\cap [x,\eta )_T$ is infinite. Suppose otherwise. Then, again using that $\rho$ is nondecreasing in the $f$-direction along $D$, there must be a sequence $(y_n)_{n\in \N}$ in $D\cap f^{-\N}(x)$ converging to $\eta$ and lying outside of $[x, \eta)_T$, such that each geodesic ray $(y_n, \eta)_T$ is disjoint from $D$ and $\lim _{n\rightarrow\infty} \rho ^x(y_n)=\overline{\rho}_D^x(\eta )>0$. The set $\{ y_n \} _{n\in \N}$ is $\rho$-infinite, but it is mapped to a single point under $r_{D,f}$, contradicting that $r_{D,f}$ is $\rho$-finite-to-one.
\end{pf}

Define $\overline{\rho}_D(x) \defeq \sup \set{\overline{\rho}_D ^x(\eta) : \eta \in \del_T (f^{-\N}(x))}$ for each $x\in X$.

\begin{claim+}\label{claim:supattained}
	For each $x \in D$ there is some $y \in D$ with $f_D(y)=x$ and $\overline{\rho}_D(x) = \rho^x(y) \overline{\rho}_D(y)$. 
\end{claim+}
\begin{pf}
	This is clear when $\overline{\rho}_D(x)=0$ since any $y$ in the nonempty set $f_D^{-1}(x)$ satisfies the claim, so assume that $\overline{\rho}_D(x)>0$. By \cref{claim:meetgeodesic} we have the equality
	\begin{equation}\label{eq:supformula}
	\overline{\rho}_D(x) = \sup \set{\rho^x(y) \overline{\rho}_D(y) : y \in f_D^{-1}(x)} .
	\end{equation}
	Since $f_D^{-1}(x)$ is $\rho$-finite and $\overline{\rho}_D(y)\leq 1$ for $y\in D$, only finitely many points $y\in f_D^{-1}(x)$ satisfy $\rho ^x (y)\overline{\rho}_D(y)>\overline{\rho}_D(x)/2$. Therefore, the supremum in \labelcref{eq:supformula} is in fact attained.
\end{pf}

Thus, by Luzin--Novikov uninformization, there is a Borel function $g : D \to D$ with $g(x) \in f_D^{-1}(x)$ and $\overline{\rho}_D (x) = \rho^x(g(x)) \overline{\rho}_D(g(x))$ for all $x \in D$. For each $x\in D$ the sequence $\rho ^x(x), \rho ^x(g(x)), \rho ^x(g^2(x)),\dots$ is nonincreasing and by the definition of $\overline{\rho}_D$ its limit is bounded above by $\overline{\rho}_D(x)$. On the other hand, by the cocycle identity, for each $n$ we have $\overline{\rho}_D(x)= \rho ^x(g^n(x))\overline{\rho}_D(g^n(x))\leq \rho ^x(g^n(x))$, and hence $\lim _{n\rightarrow\infty}\rho ^x(g^n(x)) = \overline{\rho}_D(x)$. It follows that if $x$ belongs to the $g$-invariant set $D_+\defeq \{ y\in D :\overline{\rho}_D(y)>0 \}$ then the $E_{g}$-class of $x$ is $\rho$-infinite. Therefore $E_{g}$ is nowhere smooth on $D_+$, so almost every $E_{g}$-class contained in $D_+$ is forward $f$-recurrent and hence spans exactly two ends of $T$. Thus, each component of $T$ restricted to the convex hull of $D_+$ is two-ended, so the set $D_+$ must be null because $T$ is $\mu$-nowhere essentially two-ended. This proves the first statement of the theorem.

If the last statement of the theorem fails then, since $D$ is forward $f$-recurrent, there is an $\epsilon >0$ and a positive measure set of $x\in D$ such that the set $A_x \defeq \{ y\in D\cap f^{-\N}(x) : \rho ^x(y)>\epsilon \}$ is infinite. The graph $T_{f_D}|A_x$ is connected by the hypothesis that $\rho (y)\leq \rho (f_D(y))$ for all $y\in D$, so it is an infinite tree rooted at $x$, and it is finitely branching since $f_D$ is $\rho$-finite-to-one. By K\"{o}nig's Lemma it contains an infinite branch, but this contradicts the first statement of the theorem.
\end{proof}

\begin{theorem}[Cocycle-vanishing of back-ends]\label{back_ends_converge_to_0}
Let $f$ be as in \cref{running_hypothesis_function}, and assume that $f$ is $\mu$-nowhere essentially two-ended. Then 
\[
\lim_{n\rightarrow\infty}\sup \{ \rho ^x(y):y\in f^{-[n,\infty )}(x)\} = 0
\]
for a.e.\ $x\in X$. In particular, if we let $\xi ^+(x)\defeq \lim _{n\rightarrow\infty}f^n(x)\in \del_{T_f}[x]_{E}$, then for a.e.\ $x\in X$, every end of $T _f \rest{[x]_{E}}$ other than $\xi ^+(x)$ is $\rho$-vanishing.
\end{theorem}

\begin{proof}
Throughout the proof we write $C$ for the directed Radon--Nikodym vertex core $\rnc_f$.
    If $\mu (C)=0$ then the conclusion holds, so we may assume that $C$ has positive measure, and after restricting to the $E_f$-saturation of $C$ we may moreover assume that $C$ is an $E_f$-complete section. 
    Let $\mu _C$ be the push-forward of $\mu$ to $C$ under the retraction $r_{C,f}$ to $C$ along $f$.
    Let $\rho_C$ be the Radon--Nikodym cocycle of $E_{f_C}$ with respect to $\mu _C$, namely, $\rho _C(y,z) = \rho^y(r_{C,f}^{-1}(z)) / \rho^y(r_{C,f}^{-1}(y))$ for all $(y,z) \in E_{f_C}$. 
    Then, after replacing $f$, $X$, $\mu$, and $\rho$ respectively by $f_C$, $C$, $\mu _C$, and $\rho _C$, we may assume without loss of generality that $X=C$.
    In particular, $f^{-\N}(x)\setminus \{ x \}$ is nonempty for all $x\in X$.
    
    Since $f$ is $\mu$-nowhere essentially two-ended, the contrapositive of \cref{having_bigger_points_behind} implies that after discarding a null set, the set $D$, of all $x \in X$ with $\rho^\bullet (z) < \rho^\bullet (x)$ for all $z \in f^{-\N}(x)\setminus \{ x \}$, is an $E_f$-complete section. 
    By \cref{rho-infinite_back-orbits=>on_all_rho-infinite_ends}\labelcref{rho-infinite_back-orbits=>on_all_rho-infinite_ends:meets_rho-infinite_back-sets}, after discarding another null set, we may assume that $D$ meets every $\rho$-infinite geodesic ray in $T_f$.
    
	
	\begin{claim+}\label{claim:Dbehind}
	For each $x\in X\setminus D$ there is some $y\in f^{-\N}(x)\cap D$ such that $\rho^\bullet (y)\geq \rho^\bullet (x)$.  
	\end{claim+}
	
	\begin{pf}
	Suppose the claim fails for some $x_0\in X\setminus D$ so that $\rho^\bullet (y)<\rho^\bullet (x_0)$ for all $y\in f^{-\N}(x_0)\cap D$.
    Since $x_0\notin D$ there is some $x_1\in f^{-\N}(x_0)\setminus \{ x_0 \}$ with $\rho^\bullet (x_1)\geq \rho^\bullet (x_0)$. 
    Note $x_1\not\in D$ by our assumption on $x_0$. 
    Likewise, we can find some $x_2\in f^{-\N}(x_1)\setminus \{ x_1 \}$ with $\rho^\bullet (x_2)\geq \rho^\bullet (x_1)\geq \rho^\bullet (x_0)$, and hence also $x_2\not\in D$.  
    Continuing in this way, we obtain an infinite sequence $x_0,x_1,x_2,\dots$ with $\rho^\bullet (x_n)\geq \rho^\bullet (x_0)$ that spans the geodesic ray $[x_0,\eta )_{T_f}$ from $x$ to some back $f$-end $\eta$. 
    This geodesic ray is $\rho$-infinite, so it contains some point $y$ of $D$. 
    But then for $n$ large enough we have $x_n\in f^{-\N}(y)\setminus \{ y\}$ and hence $\rho^\bullet (y)>\rho^\bullet (x_n)\geq \rho^\bullet (x_0)$, a contradiction.
	\end{pf}

	By \cref{claim:Dbehind} and \cref{lem:back_decreasing}, after discarding a null set, for every $x\in X$ we have 
	\[
	\sup \{ \rho ^x(y):y\in f^{-[n,\infty )}(x)\} \leq \sup \{ \rho ^x(y) : y\in D\cap f^{-[n,\infty )}(x) \} \longrightarrow 0 \ \text{ as }\ n\rightarrow\infty . \qedhere
	\]
	
\end{proof}


\section{Behavior of the cocycle along ends of a function}\label{sec:behavior}

Throughout this section we continue to work under \cref{running_hypothesis_function}. 

\begin{prop}[Essential end decomposition]\label{end_decomposition}
Assume \cref{running_hypothesis_function}.
Then there is a unique (up to null sets) partition of $X$ into $E_f$-invariant Borel sets $Y_0,Y_1,Y_2,Y_{\infty}$ where 
\begin{itemize}
\item $Y_0$ is the unique $\mu$-maximal $E_f$-invariant Borel set on which $E_f$ is smooth.

\item $Y_i$, $i = 1,2$, is the unique $\mu$-maximal $E_f$-invariant Borel subset of $X$ on which $T_f$ is essentially $i$-ended and $E_f$ is $\mu$-nowhere smooth.

\item $Y_{\infty}$ is the unique $\mu$-maximal $E_f$-invariant Borel subset of $X$ on which $T_f$ is $\mu$-nowhere essentially finitely-ended and $E_f$ is $\mu$-nowhere smooth.
\end{itemize}
\end{prop}

\begin{proof}
This follows from measure theoretic exhaustion and \cref{0-ended=>smooth}, \cref{3-ended=>smooth}, and \cref{uniquely_ended}.
\end{proof}

\begin{prop}\label{coboundary=>finite-ended}
Assume \cref{running_hypothesis_function}, and suppose in addition that $\rho$ is a coboundary.
Then, after discarding a null set, $X$ splits into $E_f$-invariant Borel sets $X_1$ and $X_2$ such that $f$ is essentially one ended on $X_1$ and essentially two ended on $X_2$.
\end{prop}
\begin{proof}
It is clear that $f$ is essentially one ended on each $E_f$-invariant Borel set on which $E_f$ is smooth.
We may therefore assume without loss of generality that $E_f$ is $\mu$-nowhere smooth.

By \cref{coboundary=invariant}, there is an $E_f$-invariant $\sigma$-finite Borel measure $\nu$ on $X$ equivalent to $\mu$.
Since $\nu$ is $\sigma$-finite, there is a Borel $E_f$-complete section $Y$ of finite $\nu$-measure.
Rescaling $\nu \rest{Y}$ if needed, we may assume that it is a probability measure on $Y$. 
Let $g \defeq f_Y \rest{Y}$. 
By \cref{smooth=rho-finite}, since $E_f \rest{Y}$ is pmp, after discarding a null set $g$ is an acyclic finite-to-one Borel function, and by \cref{classic_Adams} there is a partition $Y = Y_1 \cup Y_2$ into $E_g$-invariant Borel sets such that each $T_g$-component in $Y_i$ has exactly $i$-many ends, for $i = 1,2$. 
The fact that $g$ is finite-to-one ensures that the restriction of $f$ to the convex hull $\Conv_{T_f} Y_i$ is $i$-ended, witnessing that $f$ is essentially $i$-ended on $X_i \defeq [Y_i]_{E_f}$, for $i = 1,2$.
\end{proof}

\subsection{The forward end}

\begin{lemma}\label{cocycle_along_f}
Assume \cref{running_hypothesis_function}, and let $\xi ^+ (x) \defeq \lim _{n\to \infty}f^n(x) \in \del_T [x]_{E_f}$ for each $x\in X$. 

\begin{enumerate}[label=\textnormal{(\alph*)}]

\item\label{forward-lim=0} Suppose that $\lim _{n\rightarrow\infty}\rho ^x(f^n(x))=0$ for a.e.\ $x\in X$. 
Then, after discarding a null set, there is a forward $f$-invariant Borel $E_f$-complete section on which $f$ is one-ended.
In particular, $f$ is essentially one-ended.

\item\label{forward-nonvanishing} After discarding a null set, $E_f$ is smooth on the set of all $x \in X$ for which $\xi^+(x)$ is $\rho$-vanishing.

\item\label{finite+nonzero=>coboundary} If for a.e. $x \in X$, at least one of
\[
\liminf_{n\rightarrow\infty}\rho ^x(f^n(x)), 
\;\;
\limsup_{n\rightarrow\infty}\rho ^x(f^n(x)),
\;\;
\liminf_{y\to \xi ^+(x) }\rho ^x(y),
\;
\text{ and }
\;
\limsup_{y\to \xi ^+(x) }\rho ^x(y)
\]
is finite and nonzero, then $\rho$ is a coboundary and $X$ splits into $E_f$-invariant Borel sets $X_1$ and $X_2$ such that $f$ is essentially one ended on $X_1$ and essentially two ended on $X_2$ a.e. 

\item\label{forward-lim=infinity} If $\lim _{n\rightarrow\infty}\rho ^x(f^n(x))=\infty$ for a.e.\ $x\in X$ then $f$ is $\mu$-nowhere essentially two-ended.

\end{enumerate}
\end{lemma}
\begin{proof}
\labelcref{forward-lim=0} Define $Y \defeq \set{x \in X : \rho(x) > \rho(f^n(x)) \text{ for every } n \in \N}$, so $f_Y : X \to Y$ is an entire function by the assumption of the case. 
By \cref{char-rho-finite-back-orbits}, it is enough to show that $f_Y$ is one-ended a.e., so suppose towards a contradiction that the (analytic, hence $\mu$-measurable) set $Z$, of points in $Y$ lying on a bi-infinite geodesic line through $T_{f_Y}$, has positive measure. 
This contradicts \cref{backward_mass_transport} applied to $f_Z\rest{Z}$.

\labelcref{forward-nonvanishing} Assume without loss of generality that $\xi^+(x)$ is $\rho$-vanishing for all $x \in X$. 
Together with part \labelcref{forward-lim=0} this implies that, after discarding a null set, all ends of $T_f$ are $\rho$-vanishing.
(Alternatively, one could apply \cref{back_ends_converge_to_0} instead of \labelcref{forward-lim=0}.)
The statement then follows from \cref{smooth_iff_all_vanishing_ends}.

\labelcref{finite+nonzero=>coboundary} Splitting the space $X$ into four $E_f$-invariant Borel sets, we assume that $\limsup_{n \rightarrow \infty} \rho^x(f^n(x))$ is finite and nonzero for a.e.\ $x\in X$; the proof for the other three cases is similar.

Define $m : X \to \Rpos$ by $m(x) \defeq \big(\limsup_{n \to \infty} \rho ^x (f^n(x))\big)^{-1}$. Then $\rho^x(y) = \frac{m(y)}{m(x)}$ for all $(x,y) \in E_f$ since
\[  
\rho^x(y) \cdot m(y)^{-1}
= 
\limsup_{n \to \infty} \rho^x(y) \rho^y(f^n(y))
= 
\limsup_{n \to \infty} \rho^x(f^n(y))
= 
\limsup_{n \to \infty} \rho^x (f^n(x)) = m(x)^{-1}.
\]
Thus $\rho$ is a coboundary and the rest follows from \cref{coboundary=>finite-ended}.

\labelcref{forward-lim=infinity} It is enough to show that any forward $f$-invariant Borel set $Y$ on which $f$ restricts to a bijection is null. By the hypothesis on $\rho$, the set
\[
Z \defeq \set{y \in Y : \rho(y) < \rho(f^n(y)) \text{ for all } n \ge 1}
\]
is an $E_f \rest{Y}$-complete section with the property that the next return map $g \defeq f_Z$ satisfies $\rho(z) < \rho(g(z))$ for every $z \in Z$.
The conclusion therefore follows from \cref{backward_mass_transport}.
\end{proof}

As shown in \cref{example:least_deletion}, there are examples of one-ended functions $f$ as in \labelcref{running_hypothesis_function} such that each geodesic $[x,\xi ^+(x))_{T_f}$ is $\rho$-finite (and hence the hypothesis in \labelcref{forward-lim=0} of \cref{cocycle_along_f} holds) yet the relation $E_f$ is $\mu$-nowhere smooth and thus $\xi ^+(x)$ is $\rho$-nonvanishing a.e.\ in these examples by \labelcref{forward-nonvanishing} of \cref{cocycle_along_f}.

The conclusion in \labelcref{forward-lim=infinity} of \cref{cocycle_along_f} cannot be strengthened: \cref{subsec:examples:shifts} and \cref{example:least_deletion} contain respectively $\mu$-nowhere finitely ended and one-ended examples satisfying $\lim _{n\rightarrow\infty}\rho ^x(f^n(x))=\infty$ for a.e.\ $x\in X$.

\subsection{Behavior in terms of essential number of ends}

To fully describe the behavior of the cocycle along ends it is enough to restrict to the cases given by the essential end decomposition (\cref{end_decomposition}). 
The case where $T_f$ is essentially one-ended is handled in \cref{char-rho-finite-back-orbits}.

\begin{prop}[Essentially two-ended]\label{two-ended_function}
Assume \cref{running_hypothesis_function}, and suppose in addition that $f$ is essentially two-ended and that $E_f$ is $\mu$-nowhere smooth. 
Then, after discarding a null set, each $T_f$-component has exactly one $\rho$-nonvanishing back end and there exists an essentially unique forward $f$-invariant Borel $E_f$-complete section $X_2 $ such that $f \rest{X_2} : X_2 \to X_2$ is a bijection; moreover, $X_2$ is the Radon--Nikodym vertex core of $T_f$. 
For $x\in X$ let $\xi ^+ (x) \defeq \lim _{n\to \infty}f^n(x) \in \del_{T_f} [x]_{E_f}$, and let $\xi ^{-}(x)$ denote the end of $T_f \rest{X_2 \cap [x]_{E_f}}$ distinct from $\xi ^+ (x)$. 
Then for a.e.\ $x\in X_2$ we have
\begin{align*}
\liminf_{y\to \xi ^+(x) } \rho ^x(y) = \liminf _{y\to \xi ^{-}(x)}\rho ^x(y) < \infty
\ &\text{ and } \ \limsup _{y\to \xi ^+(x) } \rho ^x(y) = \limsup _{y\to \xi ^{-}(x)}\rho ^x(y) > 0
\\
\liminf_{n \to \infty} \rho^x(f^n(x)) = \liminf _{n \to \infty}\rho ^x(f^{-n}(x)) < \infty
\ &\text{ and } \ \limsup _{n \to \infty} \rho ^x(f^n(x)) = \limsup _{n \to \infty}\rho ^x(f^{-n}(x)) > 0.
\end{align*}
In addition, if we let $X'$ denote the set of all $x\in X$ for which one of these limits inferior is positive or one of these limits superior is finite, then the restriction of $\rho$ to $E\rest{X'}$ is a coboundary.
\end{prop}

\begin{proof}
Let $Z$ be a Borel $T_f$-convex $E_f$-complete section on which $T_f$ is two-ended. 
Then the set $X_2$, of all points $x \in Z$ lying on the geodesic between the two ends of $T_f\rest{Z\cap [x]_{E_f}}$, has the required property.

We prove only the first displayed line of the proposition, since the proof of the second line is almost identical.
Define
\begin{align*}
i_\pm &\defeq \liminf_{y\to \xi ^\pm(x) } \rho ^x(y) 
\\
s_\pm &\defeq \limsup _{y\to \xi ^\pm(x) } \rho ^x(y),
\end{align*}
and note that $i_+$ and $i_-$ are finite by \cref{cocycle_along_f}\labelcref{forward-lim=infinity}.
By \cref{cocycle_along_f}\labelcref{forward-lim=0} and \cref{uniquely_ended}, $E_f$ is smooth on the set on which at least one of $s_+$ and $s_-$ is zero, and hence this set is null.
The desired conclusion holds on the $E_f$-invariant Borel set where both $i_+ = 0 = i_-$ and $s_+ = \infty = s_-$, so we may assume without loss of generality that at least one of $i_+, i_-, s_+, s_-$ is nonzero and finite. 

Then $\rho$ is a coboundary by part \labelcref{finite+nonzero=>coboundary} of \cref{cocycle_along_f}. 
Thus, there is a Borel $m : X \to \Rpos$ such that $\rho(x,y) = m(x) \cdot m(y)^{-1}$ for each $(x,y) \in E_f$, and hence, it is enough to prove that $\limsup_{y\rightarrow \xi ^+(x)} m(y) = \limsup_{y\rightarrow \xi ^-(x)} m(y)$ and $\liminf_{y\rightarrow \xi ^+(x)} m(y) = \liminf_{y\rightarrow \xi ^-(x)} m(y)$. 
We only write the proof for 
\[
\ell_+ \defeq \limsup_{y\rightarrow \xi ^+(x)} m(y) = \limsup_{y\rightarrow \xi ^-(x)} m(y)\eqdef \ell_-
\]
since the argument for $\liminf$ is analogous.
Note that the functions $\ell_+$ and $\ell_-$ are $E_f$-invariant.
Let $Y$ be the ($E_f$-invariant) set of points for which $\ell_- > \ell_+$ and let $L : X \to (0,\infty)$ be an $E_f$-invariant Borel function such that $\ell_- > L > \ell_+$ pointwise, e.g., if $\ell_- < \infty$, then let $L \defeq (\ell_- + \ell_+)/2$ and if $\ell_- = \infty$, then take $L \defeq \ell_+ + 1$. 
Then the convex hull of the set $\set{y \in Y : m(y) > L}$ meets every $f$-orbit in $Y$ in a non-forward-$f$-recurrent set, witnessing the smoothness of $E_f$ on $Y$, so $Y$ is null.
Thus, $\ell_- \le \ell_+$, and switching the roles of $+$ and $-$, we also get $\ell_- \ge \ell_+$, which completes the proof.

The fact that $X_2$ is the Radon--Nikodym  vertex core of $T_f$ now follows from \cref{pruning}, and this implies the essential uniqueness of $X_2$ as well as the fact that each $T_f$-component has exactly one $\rho$-nonvanishing back end.
\end{proof}

\begin{prop}\label{funct_nowhere_fin_ended}
Assume \cref{running_hypothesis_function}, and suppose in addition that $f$ is $\mu$-nowhere essentially finitely-ended. 
For $x\in X$ let $\xi^+ (x) \defeq \lim _{n\to \infty}f^n(x) \in \del_{T_f} [x]_{E_f}$. 
Then for a.e.\ $x \in X$ we have:
\begin{enumerate}[(a)]
\item\label{unique_nonvanishing_end} The end $\xi^+ (x)$ is the unique $\rho$-nonvanishing end in $\del_{T_f} [x]_{E_f}$.

\item\label{infinite_limsup}  $\displaystyle{\limsup_{n\rightarrow\infty}\rho ^x(f^n(x)) = \infty}$. 

\item\label{0_liminf}  $\displaystyle{\liminf_{y \to \xi^+(x)}\rho ^x(y) = 0}$. 

\item\label{summable_back_geodesics} For each end $\eta$ of $T_f \rest{[x]_{E_f}}$ distinct from $\xi^+(x)$, the geodesic ray $[x, \eta)_{T_f}$ is $\rho$-finite.
Moreover, the backward geodesic weight of $f$ at $x$, i.e., the quantity
\[
\Sigma_f(x)\defeq \sup \{ \rho ^x([x,v]_{T_f}) : v\in f^{-\N}(x) \},
\]
is finite.
\end{enumerate}
\end{prop}

\begin{proof}
Part \labelcref{unique_nonvanishing_end} follows from \cref{back_ends_converge_to_0} and \cref{cocycle_along_f}\labelcref{forward-nonvanishing}.
For \labelcref{infinite_limsup}, first note that $\limsup _{n \rightarrow \infty}\rho^x(f^n(x)) > 0$ by \cref{cocycle_along_f}\labelcref{forward-lim=0}, and hence \cref{cocycle_along_f}\labelcref{finite+nonzero=>coboundary} implies that $\limsup _{n\rightarrow\infty}\rho ^x(f^n(x))=\infty$. 
For part \labelcref{0_liminf}, note that each neighborhood of $\xi^+(x)$ in $\del_{T_f} [x]_{E_f}$ contains other ends because otherwise $f$ would be $\mu$-somewhere essentially finitely-ended. These other ends are $\rho$-vanishing by part \labelcref{unique_nonvanishing_end}, which implies \labelcref{0_liminf}.
Finally, part \labelcref{summable_back_geodesics} is immediate from \cref{summable_back-rays}.
\end{proof}

\begin{cor}[Barytropy and essential number of ends]\label{essential_ends=barytropic}
Assume \cref{running_hypothesis_function} and suppose in addition that $E_f$ is $\mu$-nowhere smooth.
Then
\begin{enumerate}[(a)]
\item\label{item:monobarytropic} $f$ is $\rho$-monobarytropic if and only if, after discarding an $E_f$-invariant Borel null set, $f$ is essentially one-ended.

\item\label{item:dibarytropic} $f$ is $\rho$-dibarytropic if and only if, after discarding an $E_f$-invariant Borel null set, $f$ is essentially two-ended.

\item\label{item:polybarytropic} $f$ is $\rho$-polybarytropic if and only if $f$ is $\mu$-nowhere essentially finitely-ended.
\end{enumerate}
\end{cor}
\begin{proof}
The $\mu$-nowhere smoothness implies that a.e.\ $T_f$-component has at least one $\rho$-barytropic end by \cref{smooth_iff_all_vanishing_ends}.
Part \labelcref{item:monobarytropic} now follows immediately from \cref{char-rho-finite-back-orbits}.
The right-to-left implication of \labelcref{item:monobarytropic,item:dibarytropic} follows from the pruning lemma \cref{pruning}\labelcref{pruning:convex} and the other implication follows by taking the convex hull of the two barytropic ends in a.e.\ $T_f$-component.
Finally, \labelcref{item:polybarytropic} follows from \labelcref{item:monobarytropic,item:dibarytropic} because $\mu$-nowhere smoothness implies, by \cref{3-ended=>smooth}, that if $f$ is $\mu$-somewhere essentially $n$-ended for some $n \in \N$, then $n$ equals $1$ or $2$.
\end{proof}

\section{Representative examples of cocycle behaviors}\label{sec:examples}

In this section we give examples of functions exhibiting various cocycle behaviors along geodesics.

\subsection{Multi-ended, forward convergence to $\infty$}\label{subsec:examples:shifts}

\begin{example}\label{example:one_sided_shift}
Let $k\geq 2$, equip $k^\N$ with the product measure $\nu ^\N$, where $\nu$ is the uniform distribution on $k$, let $X\subseteq k^\N$ be the co-countable set of sequences which are not eventually periodic, and let $h:X\to X$ be the one-sided shift map, which is acyclic and $k$-to-$1$.
The orbit equivalence relation $E_h$ is mcp with associated Radon--Nikodym cocycle $\rho$ determined by $\rho ^x(h(x))=k$ for all $x\in X$.
Thus, in each $T_h$-component, the forward $h$-end is nonvanishing and, in fact, each forward geodesic exhibits uniform exponential cocycle growth, while the back $h$-ends are vanishing and geodesics to these ends have uniform exponential cocycle decay. 
However, all back $h$-ends (and hence all ends of $T_h$) are barytropic since $\rho ^x(h^{-n}(x))=1$ for all $n\in\N$ and $x\in X$.
\end{example}

\begin{example}\label{example:free_group_boundary}
Let $\F_d$ be the free group on a free generating set $S$ of cardinality $2 \le d \le \aleph_0$.
The translation action of $\F_d$ by automorphisms on its standard Cayley graph $T$ extends to a continuous action on the completion $\-{\F_d}^T$.
We identify the boundary $\partial_T \F_d$ with the space $\partial \F_d$ of infinite (indexed by $\N$) reduced words in the generators and their inverses.
Let $m$ be a symmetric probability measure on $S^\pm \defeq S \cup S^{-1}$ assigning to every generator positive measure.
Then the associated network has a positive (weighted) isoperimetric constant, hence the $m$-random walk is transient by \cite[Theorem 6.7]{Lyons-Peres:book}.
Let $\mu$ denote the associated hitting probability measure on $\partial \F_d$ of the $m$-random walk starting at the identity.
Explicitly, the measure $\mu$ defined on the clopen cylinder $C_w$ of all infinite reduced words beginning with the nonempty finite reduced word $w = w_0 w_1 \dots w_k \in \F_d$ is given by the formula:
\[
\mu(C_w) \defeq m(w_0) P(w_0, w_1) P(w_1, w_2) \cdots P(w_{k-1}, w_k),
\]
where the (transition) matrix $P$ on $S^\pm$ is defined, for $a,b \in S^\pm$, by
$
P(a,b) \defeq \tfrac{m(b)}{m(S^\pm \setminus \{a^{-1}\})}
$
if $a^{-1} \ne b$, and $P(a,b) \defeq 0$ otherwise.
(This can be shown by induction on $k$.)
Note that the orbit equivalence relation $E$ of the boundary action of $\F_d$ on $\del \F_d$ coincides with $E_h$, where $h : \del \F_d \to \del \F_d$ is the one-sided shift map.
One now verifies, as in \cite[Proposition 5.3]{Tserunyan-Zomback}, that the Radon--Nikodym cocycle $\rho$ of the orbit equivalence relation $E$ of the boundary action of $\F_d$ on $(\del \F_d, \mu)$ is given by
\[
\rho^{h(x)}(x) = \frac{m(x_0) P(x_0, x_1)}{m(x_1)} = \frac{m(x_0)}{m(S^\pm \setminus \{x_0^{-1}\})}
\]
for $\mu$-a.e.\ $x \in \del \F_d$.
Note that $\alpha \defeq \sup_{a \in S^\pm} \frac{m(a)}{m(S^\pm \setminus \{a^{-1}\})} < 1$ and hence the back $h$-ends are vanishing and geodesics to these ends have uniform exponential cocycle decay, while the forward $h$-end is nonvanishing and, in fact, each forward geodesic exhibits uniform exponential cocycle growth.
\end{example}

\subsection{One ended, all behaviors}\label{example:least_deletion}

Let $X\subseteq \{ 0, 1 \} ^{\N}$ be the set of all binary sequences containing infinitely many $1$'s and $0$'s let $f : X \rightarrow X$ be the \dfn{least deletion map}, i.e., $f$ flips the first $1$ to a $0$.
Thus, $X$ may be naturally identified with the space of all infinite and co-infinite subsets of $\N$, in which case $f$ corresponds to the map which deletes the least element from a given subset. 
Then each $T_f$-component is a locally finite one-ended tree.
The equivalence relation $E_f$ coincides with the equivalence relation $E_0$ of eventual equality of binary sequences.
For $p \in (0, 1)$ let $\mu _p$ be the $p$-weighted i.i.d.\ coin flip measure on $\{ 0, 1 \} ^{\N}$, i.e., $\mu _p =\nu _p ^{\N}$ where $\nu _p$ is the probability measure on $\{ 0, 1 \}$ with $\nu _p ( \{ 1 \} )=p$. 
Then $\mu _p(X)=1$ and $E_f$ is mcp on $(X,\mu_p)$ with associated Radon--Nikodym cocycle $\rho ^y(x) = \lambda ^{k_0-k_1}$, where $\lambda = \frac{p}{1-p}$, $k_0$ is the number of indices $n$ with $y(n)=0$ and $x(n)=1$, and $k_1$ is the number of indices $n$ with $y(n)=1$ and $x(n)=0$.

Observe that $\rho ^x(f^n(x))=\lambda ^{-n}$ for each $x\in X$.
Therefore, if $p>\frac{1}{2}$ then each forward $f$-geodesic is cocycle-finite since in this case $\lambda >1$, hence $\sum_{n\geq 0}\rho ^y(f^n(y))<\infty$; on the hand, if $p<\tfrac{1}{2}$ then $\lambda <1$ and each forward $f$-geodesic has exponential cocycle-growth to $\infty$.

In a earlier draft of this article, we asked the following questions: do there exist examples of acyclic $f$ with $E_f$ mcp, where each $T_f$-component is one-ended and either of the following Radon--Nikodym topographies manifest?
\begin{enumerate}[leftmargin=*]
    \item\label{q:cocycle-oscillating} The cocycle has oscillatory behavior between $0$ and $\infty$ along each forward $f$-geodesic.
    \item\label{q:cocycle-vanishing_unsummably} Each forward $f$-geodesic is cocycle-infinite, yet the cocycle converges to $0$ along such geodesics.
\end{enumerate}

\noindent This has since been answered positively by Bell, Chu, and Rodgers in \cite[Examples 4.1 and 5.1]{kids}, where they prove that for appropriate choices of product measures on $\{ 0, 1\} ^\N$, the least deletion map exhibits either of the desired topographies.
After this result, we also found other examples of one-ended functions with oscillatory behavior of the Radon--Nikodym cocycle along the forward geodesic, see \cref{example:any-ended_oscillation}.

\subsection{Two-ended, oscillatory behavior}\label{example:odometer}

Let $X$ be as in \cref{example:least_deletion}, but now take $f:X\to X$ to be the odometer map $f(1^n0x)=0^n1x$, so that $E_f$ still coincides with $E_0$ on $X$, but now each $T_f$-component is a bi-infinite line.
Then for $p\in (0,1)$ with $p\neq \tfrac{1}{2}$ the measure $\mu _p$ is not equivalent to an $E_f$-invariant $\sigma$-finite measure (in fact, the relation $E_f$ on $(X,\mu _p )$ is type $\mathrm{III}_{\lambda}$ where $\lambda = \tfrac{p}{1-p}$), hence by \cref{two-ended_function} the Radon--Nikodym cocycle exhibits oscillatory behavior between $0$ and $\infty$ along each $T_f$-component in both directions, i.e., 
\[
\limsup _{n\to\infty}\rho ^x(f^{n}(x))=\limsup _{n\to\infty}\rho ^x(f^{-n}(x))=\infty  
\; \text{ and }\;  
\liminf _{n\to\infty}\rho ^x(f^{n}(x))=\liminf _{n\to\infty}\rho ^x(f^{-n}(x))=0
\]
for a.e.\ $x\in X$.

\subsection{All possibilities of number of ends with oscillatory behavior}\label{example:any-ended_oscillation}

We now build a family of examples containing for each number of ends a function with oscillatory forward behavior.
This in particular includes a $\mu$-nowhere essentially finitely-ended setting function and yet another example of a one-ended function.

Start with an acyclic countable-to-one Borel function $g:X\to X$ with $E_g$ mcp on $(X,\mu )$ such that $\limsup_n \rho^x(g^n(x)) =\infty$ for a.e.\ $x \in X$.

We will expand $(X,\mu )$ to a finite measure space $(\tilde{X},\tilde{\mu})$ and define a function $f:\tilde{X}\to \tilde{X}$ with $X$ an $E_f$-complete section such that the next-return map induced by $f$ on $X$ is $g$, and the Radon--Nikodym cocycle exhibits oscillatory behavior between $0$ and $\infty$ along each forward $f$-geodesic.

Since $E_g$ is hyperfinite, we may write it as an increasing union of finite subequivalence relations $F_n$, $n\in \N$, each of whose classes are $T_g$-connected. 
Let $X_n\subseteq X$ be the $F_n$-transversal consisting of the $g$-frontmost point of each $F_n$-class.
We may assume that $F_0$ is the equality relation on $X$ so that $X=X_0\supseteq X_1\supseteq X_2\supseteq\cdots$ is a vanishing sequence of $E_g$-complete sections. 
Let $\mu _n$ be the finite measure on $X_n$ determined by 
\[
\frac{d\mu_n}{d\mu}(x)=2^{-n}\min_{y\in [x]_{F_n}}\rho ^x(y)
\]
for all $x\in X_n$.
Then equip $\tilde{X}\defeq X\sqcup \bigsqcup_{n\geq 0}X_n\times \{n\}$ with the measure $\tilde{\mu}\defeq \mu + \sum _{n\geq 0}\mu_n\times \delta _n$, which is finite by construction.
Define $f:\tilde{X}\to\tilde{X}$ by 
\begin{align*}
f(x)&=(x,n) \text{ if }x\in X_n\setminus X_{n+1},\\ f(x,n)&=(x,n-1) \text{ if $x\in X_n$ and $n>0$},\\
f(x,0)&=g(x) \text{ for each $x\in X$}.
\end{align*}
It is clear that $X$ is an $E_f$-complete section.
The assumption that $\limsup _n \rho^x(g^n(x))=\infty$ implies that $\limsup_n \rho^x(f^n(x))=\infty$, since the sequence $(g^n(x))_{n\in \N}$ is cofinal in the forward $f$-geodesic ray starting at $x$.
To establish the claimed oscillation it is therefore enough to show that $\liminf _n \rho ^x(f^n(x))=0$ for each $x\in X$.
Recall that $r_{g,X_n}$ denotes the retraction map from $X$ to $X_n$ via $g$ (so $F_n$ coincides with relation of having the same $r_{g,X_n}$-image).
Given $x\in X$, the sequence $(r_{g,X_n}(x))_{n\in \N}$ is cofinal in the forward $f$-geodesic ray starting at $x$, and by definition of $\tilde{\mu}$ we have $\rho ^x(f(r_{g,X_n}(x)))\leq 2^{-n}$.

Lastly, note that $T_f$ is essentially $n$-ended (resp.\ $\mu$-nowhere essentially finitely ended) if and only if $T_g$ is.
Thus, by starting with an appropriate function $g$, this construction gives examples of oscillating cocycle behavior along the forward geodesics in each of the three regimes.

\noindent
\begin{enumerate}[(i),leftmargin=*]
\item For a one-ended example, take the least deletion function in \cref{example:least_deletion} with the Bernoulli measure $\mu(p)$ on $2^\N$, where $p < \frac{1}{2}$.

\item For a two-ended example, take the odometer in \cref{example:odometer}.

\item For a $\mu$-nowhere essentially finitely-ended example, take $g$ to be the one-sided shift function $h$ from \cref{example:one_sided_shift}; since every $T_h$-end is barytropic, $h$ is $\mu$-nowhere essentially finitely-ended by \cref{essential_ends=barytropic}\labelcref{item:polybarytropic}.
\end{enumerate}

\section{The spaces of ends of topographic significance}

\begin{workinghyp}\label{Adams_workinghyp}
$T$ is an acyclic locally countable mcp Borel graph on a standard probability space $(X,\mu)$ with associated Radon--Nikodym cocycle $\rho :E_T\rightarrow \Rpos$.
\end{workinghyp}

\subsection{The space of nonvanishing ends and its relation to spaces of other ends}

Recall from \cref{def:barytropic-vanishing-finite_geod} that for a vertex $x \in X$ and $\xi \in \del_T [x]_{E_T}$, the $\rho$-weight of $\xi$ with respect to $x$ is the quantity
\[
\urho^x(\xi) \defeq \limsup_{y \to \xi} \rho^x(y).
\]

The following theorem in particular implies that the $\rho$-weight of nonvanishing ends of $T$ is constant within a.e.\ $T$-component $C$ (with respect to a fixed basepoint $x \in C$).

\begin{theorem}[Constant weight of nonvanishing ends]\label{constant_weight}
Assume \cref{Adams_workinghyp}.
Then for a.e.\ $x \in X$, each $\rho$-nonvanishing end $\xi$ of $T \rest{[x]_{E_T}}$ satisfies
\begin{equation}\label{eq:sup-weight_attained}
\urho^x(\xi) = \sup_{y \in [x]_{E_T}} \rho^x(y).    
\end{equation}
Moreover, the set of nonvanishing ends of $T \rest{[x]_{E_T}}$ is a closed subset in $\del_T [x]_{E_T}$ for a.e.\ $x \in X$.
\end{theorem}

\begin{proof}
We will establish both conclusions simultaneously. 
In fact, the last statement follows from the first and the upper semi-continuity of $\urho^x$ and compactness of edge-removal topology, but we give an alternative proof.

By \cref{smooth=rho-finite}, we may assume that $E_T$ is $\mu$-nowhere smooth. Let $H$ be the set of directed edges $e$ of $T$ for which 
\[
\sup_{y \in \Vo(e)} \rho^{\orig(e)}(y) < \sup_{y \in [\orig(e)]_{E_T}} \rho^{\orig(e)}(y).
\]
Note that for $e \in H$ we in fact have $\sup_{y \in \Vo(e)} \rho^x(y) < \sup_{y \in [\orig(e)]_{E_T}} \rho^x(y)$ for each $x \in [\orig(e)]_{E_T}$, which implies that $H$ is coherent.
By partitioning $X$ into two $E_T$-invariant Borel sets, it is enough to consider the following two cases.

\begin{case}{1}{Every $e \in H$ is $\leq_H$-below a $\leq_H$-maximal edge in $H$}
Let $Y \defeq X \setminus \Vo(H)$ so that $Y$ is a $T$-convex Borel $E_T$-complete section. 
By definition, every end $\xi$ of $T \rest{Y}$ satisfies $\labelcref{eq:sup-weight_attained}$, and by \cref{pruning}, after discarding an $E_T$-invariant null set, all other ends of $T$ are vanishing.
\end{case}

\begin{case}{2}{$H$ meets every $T$-component, but there are no $\leq_H$-maximal edges} 
By \cref{coherent_orientation_prop}\labelcref{item:coherent_dichotomy} and the paragraph following \cref{coherent_orientation_prop}, the map $x \mapsto \xi_x \defeq \lim _{e\in H_x}\orig (e)$ is an $E_T$-invariant Borel selection of one end from each $T$-component, where $H_x$ is the intersection of $H$ with the $T$-connected component of $x$. 
By definition, for each $x \in X$, the end $\xi_x$ is the unique end of $T \rest{[x]_{E_T}}$ satisfying \labelcref{eq:sup-weight_attained}.
Let $f : X \to X$ be the function mapping each $x \in X$ to its successor on the geodesic ray through $T$ from $x$ to $\xi_x$. 
It follows that $f$ is $\mu$-nowhere essentially two-ended since otherwise both of these ends would satisfy \labelcref{eq:sup-weight_attained} by \cref{two-ended_function} (using the assumption that $E_T$ is $\mu$-nowhere smooth), contradicting the uniqueness of $\xi_x$. 
Finally, by \cref{back_ends_converge_to_0}, for a.e.\ $x \in X$, the unique nonvanishing end of $T \rest{[x]_{E_T}}$ is $\xi_x$.
\end{case}
\end{proof}

The definition of nonvanishing ends yields that the set of nonvanishing ends within each connected component is $F_\sigma$; however, the following measure-theoretic argument shows that this set is closed within almost every connected component.

\begin{cor}\label{at_least_2_nonvanishing_core} 
Assume \cref{Adams_workinghyp}, and suppose that each $T$-component has more than one nonvanishing end. 
Then, after discarding a null set, an end of $T$ is nonvanishing if and only if it is barytropic (i.e., it belongs to $\mathrm{Core}_\mu(T)$).
In particular, in a.e.\ $T$-component, the set of nonvanishing ends is closed. 
\end{cor}
\begin{proof}
By \cref{constant_weight}, we may assume without loss of generality that the set of nonvanishing ends of every $T$-component is closed.
Let $Y$ be the set of all vertices that lie in the convex hull of the set of all nonvanishing ends of $T$, so that $\del_T Y$ is the set of all nonvanishing ends of $T$.
Since $Y$ is an analytic set, it is measurable, so we may assume that $Y$ is Borel after discarding an $E_T$-invariant null set.
Furthermore, $Y$ is a $T$-convex subset of $\rnc_T$, so by \cref{core_minimality}, after discarding another null set, $Y = \rnc_T$.
\end{proof}

Recall from \cref{def:barytropic-vanishing-finite_geod} that an end $\xi$ of $T$ is said to have $\rho$-finite geodesics if every (equivalently: some) geodesic ray converging to $\xi$ is $\rho$-finite; otherwise, we say that $\xi$ has $\rho$-infinite geodesics.

\begin{cor}\label{vanishing_implies_cocycle-finite_geodesics}
Assume \cref{Adams_workinghyp}.
Then, after discarding a null set, every $\rho$-vanishing end has $\rho$-finite geodesics.
\end{cor}

\begin{proof}
By \cref{at_least_2_nonvanishing_core,smooth_iff_all_vanishing_ends}, it is enough to consider the case where every connected component of $T$ has exactly one nonvanishing end, in which case $T=T_f$ where $f : X \to X$ is the Borel map which moves each point of $X$ one step closer to the unique nonvanishing end of its $T$-component.
Then $f$ is $\mu$-nowhere essentially two-ended by \cref{two-ended_function}, and hence all $\rho$-vanishing ends of $T_f$ have $\rho$-finite geodesics by \cref{summable_back-rays}.
\end{proof}

\begin{theorem}[Perfectly many nonvanishing ends]\label{nonamenable_perfect_nonvanishing}
Assume \cref{Adams_workinghyp}, and suppose that $E_T$ is $\mu$-nowhere amenable. 
Then, after discarding a null set, the set of nonvanishing ends meets the boundary of each $T$-component in a nonempty, closed, and perfect set. 
\end{theorem}
\begin{proof}
By \cref{smooth_iff_all_vanishing_ends,JKL:end_selection}, $\mu$-nowhere amenability implies that a.e.\ $T$-component has more than two nonvanishing ends. 
By \cref{at_least_2_nonvanishing_core}, after discarding a null set, the set of nonvanishing ends of $T$ coincides with $\del_T \rnc_T$. 
In addition, \cref{isolated=>vanishing} applied to $T \rest{\rnc_T}$ implies that, after discarding another null set, $\del_T \rnc_T$ has no isolated ends.
\end{proof}

\subsection{Barytropic and nonvanishing as a measure class invariants}

\begin{cor}
Assume \cref{Adams_workinghyp}, and suppose that $E_T$ is ergodic.
If $\rho$ is not a coboundary then, after discarding a null set, $\limsup_{y\to\xi}\rho^x(y)=\infty$ for every $x\in X$ and every nonvanishing end $\xi$ of $T\rest{[x]_{E_T}}$, i.e., each nonvanishing end of $T$ has infinite $\rho$-weight.
\end{cor}
\begin{proof}
Towards the contrapositive, suppose that there is a positive measure set of $x \in X$ such that $T \rest{[x]_{E_T}}$ has nonvanishing ends of finite $\rho$-weight. 
By ergodicity, this happens on a conull set. 
By \cref{constant_weight}, after discarding a null set, for each point $x \in X$ the supremum $s(x) \defeq \sup_{y \in [x]_{E_T}} \rho^x(y)$ is finite. 
By the cocycle identity, $s(x) = \rho^x(y) s(y)$ for $E_T$-equivalent points $x,y \in X$, so $\rho$ is a coboundary.
\end{proof}

The following theorem shows that the notions of vanishing and nonvanishing of ends essentially only depend on a given measure class rather than on the Radon--Nikodym cocycle of a particular representative measure.

\begin{theorem}\label{measure_class_invariant}
Let $T$ be a locally countable acyclic Borel graph on a standard Borel space $X$, and let $\mathscr{C}$ be an $E_T$-invariant measure-class of Borel probability measures on $X$.
Let $\mu$ and $\nu$ be probability measures in $\mathscr{C}$ and let $\rho_\mu$ and $\rho_\nu$ be their associated Radon--Nikodym cocycles on $E_T$.
Then, after discarding a null set, we have:

\begin{enumerate}[(a), leftmargin=*]
\item\label{item:vanishing_mcp-invariance} An end $\xi$ of $T$ is $\rho_\mu$-nonvanishing if and only if $\xi$ is $\rho_\nu$-nonvanishing.

\item\label{item:barytropic_mcp-invariance} An end $\xi$ of $T$ is $\rho_\mu$-barytropic if and only if $\xi$ is $\rho_\nu$-barytropic.

\item\label{item:RN-core_mcp-invariance} $\mathrm{Core}_\mu (T) = \mathrm{Core}_\nu (T)$.
\end{enumerate}
\end{theorem}

\begin{proof}
Part \labelcref{item:RN-core_mcp-invariance} follows from \labelcref{item:barytropic_mcp-invariance}.

For part \labelcref{item:vanishing_mcp-invariance}, it is enough to show that every $\rho_\mu$-vanishing end is also $\rho_\nu$-vanishing.
By decomposing $X$ into $E_T$-invariant Borel sets, the analysis reduces to the following cases.

\begin{case}{1}{Every end of $T$ is $\rho_\mu$-vanishing} 
By the backward implication of \cref{smooth_iff_all_vanishing_ends}, $E_T$ is smooth on a conull set, and hence by the forward implication of \cref{smooth_iff_all_vanishing_ends}, all ends of $T$ are $\rho_\nu$-vanishing after discarding a null set.
\end{case}

\begin{case}{2}{Every $T$-component has exactly one $\rho_\mu$-nonvanishing end}
In this case, $E_T$ is $\mu$-nowhere smooth by \cref{smooth=rho-finite}, and $T = T_f$ for the Borel map $f : X \to X$ which moves each point of $X$ one step closer to the unique $\rho_\mu$-nonvanishing end in its $T$-component.
It follows from \cref{two-ended_function} that $f$ is $\mu$-nowhere essentially two-ended, and hence also $\nu$-nowhere essentially two-ended.
Applying \cref{back_ends_converge_to_0}, we conclude that for a.e.\ $x \in X$, all ends of $T \rest{[x]_{E_T}}$ other than $\xi ^+(x)\defeq \lim _{n\rightarrow\infty}f^n(x)$ are $\rho_\nu$-vanishing.
\end{case}

\begin{case}{3}{Every $T$-component has at least two $\rho_\mu$-nonvanishing ends}
By \cref{at_least_2_nonvanishing_core}, after discarding a null set, the set of $\rho_\mu$-vanishing ends coincides with the complement of $\del_T \rnc_T$ in $\del_T X$.
All ends in this set are also $\rho_\nu$-vanishing (after discarding another null set) by \cref{pruning} applied to $\rnc_T$ and the measure $\nu$.
\end{case}

For part \labelcref{item:barytropic_mcp-invariance}, by \cref{at_least_2_nonvanishing_core} and part \labelcref{item:vanishing_mcp-invariance}, it suffices to handle the case where every $T$-component has exactly one nonvanishing end.
Let $f$ be the function as in Case 2 above, so $T = T_f$.
It then suffices to prove for back $f$-ends that $\rho_\mu$-abarytropic implies $\rho_\nu$-abarytropic.
It is enough to consider the following cases:

\begin{case}{1}{All back $f$-ends are $\rho_\mu$-abarytropic.}
Then by \cref{char-rho-finite-back-orbits}, after discarding a null set there is a forward $f$-invariant Borel $E_f$-complete section $Z \subseteq X$ on which the restriction $f:Z\rightarrow Z$ is one-ended. 
By the same theorem, it now follows that all back $f$-ends are $\rho_\nu$-abarytropic.
\end{case}

\begin{case}{2}{Every $T_f$-component has a $\rho_\mu$-abarytropic back $f$-end.}
Then the $\rho_\mu$-abarytropic ends of $T$ are exactly the ends in $\partial_T X \setminus \partial_T \rnc_T$, all of which are $\rho_\nu$-barytropic by \cref{pruning}\labelcref{pruning:convex}.
\end{case}
\end{proof}

\cref{measure_class_invariant} immediately implies the following.

\begin{cor}
Assume \cref{Adams_workinghyp}.
If $\mu$ is equivalent to an $E_T$-invariant probability measure then every $T$-end in a.e.\ $T$-component is $\rho$-nonvanishing.
\end{cor}

\subsection{Generalizations of the Adams dichotomy}

The following trichotomy combines our generalizations of Adams dichotomy \cite{Adams_trees} and Miller's theorem \cite[Theorem A]{Miller:ends_of_graphs_I} to the measure class preserving setting.

\begin{theorem}[Trichotomy for acyclic mcp graphs]\label{trichotomy}
Let $T$ be an acyclic locally countable mcp Borel graph on a standard probability space $(X, \mu)$. 
\begin{enumerate}[(a)]
    \item\label{trichotomy:smooth} $E_T$ is smooth on a conull set if and only if all ends of a.e.\ $T$-component are vanishing.
    
    \item\label{trichotomy:amenable} $E_T$ is amenable and $\mu$-nowhere smooth if and only if a.e.\ component of $T$ has exactly one or two nonvanishing ends.

    \item\label{trichotomy:nowhere-amenable} $E_T$ is $\mu$-nowhere amenable if and only if for a.e.\ $T$-component, the set of all nonvanishing ends of that component is nonempty, closed, and has no isolated points.
\end{enumerate}
\end{theorem}
\begin{proof}
It is enough to prove the forward implication of each part since $X$ can always be partitioned into three $E_T$-invariant Borel sets on which these forward implications can be applied.

The forward implications of \labelcref{trichotomy:smooth,trichotomy:nowhere-amenable} follow respectively from \cref{smooth=rho-finite,nonamenable_perfect_nonvanishing}. 
Towards proving the forward implications of \labelcref{trichotomy:amenable}, assume that $E_T$ is $\mu$-amenable and $\mu$-nowhere smooth. 
By \cref{smooth_iff_all_vanishing_ends}, discarding a null set, we may further assume that each $T$-component has at least one nonvanishing end. 
By decomposing $X$ into two $E_T$-invariant Borel sets, the analysis reduces to the following cases.

\begin{case}{1}{$T$ is essentially two-ended} In this case, there is a Borel set $L \subseteq X$ that meets each $T$-component in a single bi-infinite line. By \cref{pruning}, every end of $T$ other than the two spanned by $L$ in each $T$-component, is vanishing.
\end{case}

\begin{case}{2}{$T$ is $\mu$-nowhere essentially two-ended} By \cref{JKL:end_selection}, since $E_T$ is amenable, there is an $E_T$-invariant Borel selection of exactly one end from each $T$-component, and hence $T = T_f$ for the Borel map $f : X \to X$ which moves each point of $X$ one step closer to its selected end. Then we apply \cref{back_ends_converge_to_0} to conclude that in a.e.\ $T$-component, the selected end is the only nonvanishing end.\qedhere
\end{case}
\end{proof}

\begin{theorem}[Dichotomy via cocycle-infinite geodesics]\label{dichotomy:coc-finite_geodesics}
Let $T$ be an acyclic locally countable mcp Borel graph on a standard probability space $(X,\mu )$, with associated Radon-Nikodym cocycle $\rho :E_T\rightarrow \Rpos$.
Then:
\begin{enumerate}[(a)]        
    \item \label{dichotomy:amenable} $E_T$ is amenable if and only if a.e.\ component of $T$ has at most two ends with $\rho$-infinite geodesics.

    \item \label{dichotomy:nonamenable} $E_T$ is $\mu$-nowhere amenable if and only if for a.e.\ $T$-component, the set of all ends with $\rho$-infinite geodesics is nonempty, $G_{\delta}$, and has no isolated points.
\end{enumerate}
\end{theorem}
\begin{proof}
As in the proof of \cref{trichotomy}, it is enough to prove the forward implications in both \labelcref{dichotomy:amenable,dichotomy:nonamenable}.
The forward implication in part \labelcref{dichotomy:amenable} follows from parts \labelcref{trichotomy:smooth,trichotomy:amenable} of \cref{trichotomy}, in tandem with \cref{vanishing_implies_cocycle-finite_geodesics}.

For \labelcref{dichotomy:nonamenable}, by \cref{nonamenable_perfect_nonvanishing} and after discarding a null set and restricting to the $T$-convex hull of the set of all nonvanishing ends of $T$, we may assume that every end of $T$ is nonvanishing, $T$ has no leaves, and the set of ends in each $T$-component is perfect. 
By \cref{constant_weight}, we may additionally assume that \labelcref{eq:sup-weight_attained} holds for all $x \in X$ and every end $\xi$ of $T \rest{[x]_{E_T}}$.
Fix $x \in X$.

Firstly, we claim that the set $\IC_x$ of ends in $\del_T [x]_{E_T}$ with $\rho$-infinite geodesics is dense in $\del_T [x]_{E_T}$, and hence this set has no isolated points in its subspace topology since $\del_T [x]_{E_T}$ is perfect.
Indeed, we fix a directed edge $e$ of $T \rest{[x]_{E_T}}$ and we show that $\del_T \Vt(e)$ contains an end with $\rho$-infinite geodesics.
Letting $x_{-1} \defeq \orig(e)$, we inductively define a sequence $(x_n)$ of points in $\Vt(e)$ that spans a geodesic ray with $\rho^x(x_n) \le \rho^x(x_{n+1})$ for all $n \in \N$.
Let $x_0 \defeq \term(e)$. 
Given $x_0, \dots, x_n$, let $e_n$ be the unique edge with $\term(e_n) = x_n$ and $x_{-1} \in \Vo(e_n)$. Let $x_{n+1}$ be a vertex in $\Vt(e_n)$ with $\rho^x(x_{n+1}) \ge \rho^x(x_n)$, which exists by \labelcref{eq:sup-weight_attained}.
This proves the claim.

It remains to show that $\IC_x$ is $G_\delta$.
But this is immediate from the fact that an end $\xi \in \del_T [x]_{E_T}$ is in $\IC_x$ if and only if for each $w \in \N$ there is $\ell \in \N$ such that the initial segment of the geodesic $[x,\xi)_T$ of length $\ell$ has $\rho^x$-weight at least $w$.
\end{proof}

\begin{remark}\label{infinite_geodesics_not_robust}
We emphasize that \cref{dichotomy:coc-finite_geodesics} can not be made into a trichotomy analogous to \cref{trichotomy}: by example \cref{example:least_deletion} there exists a one-ended acyclic Borel graph on a probability space $(X,\mu )$ that is $\mu$-nowhere smooth, but has no end with $\rho$-infinite geodesics.

In addition, alternative \labelcref{dichotomy:nonamenable} of \cref{dichotomy:coc-finite_geodesics} cannot be strengthened by replacing ``$G_\delta$'' with ``closed.'' 
Indeed, let $T_{2,3}$ denote the directed tree in which every vertex has two incoming edges and three outgoing edges, and let $G$ be its automorphism group.
Then, in the natural directed treeing $T$ of the cross-section equivalence relations associated to a free pmp action of $G$, every $T$-component is isomorphic to $T_{2,3}$, and the cocycle $\rho$ takes the value $3/2$ along each directed edge of $T$; it follows that both the set of ends with $\rho$-infinite geodesics and its complement are dense in each component, hence neither is closed.
\end{remark}






\def\MR#1{}
\bibliographystyle{alphaurl}
\bibliography{refs}


\end{document}